\newcommand{\neutralize}[1]{\expandafter\let\csname c@#1\endcsname\count@}
\theoremstyle{plain}
\newtheorem{thm}{Theorem}[section]
\newtheorem*{thm*}{Theorem}
\newtheorem{lem}[thm]{Lemma}
\newtheorem*{claim}{Claim}
\newtheorem{pro}[thm]{Proposition}
\newtheorem{pro-def}[thm]{Proposition-Definition}
\newtheorem{cor}[thm]{Corollary}
\newtheorem{conj}[thm]{Conjecture}
\newtheorem{que}[thm]{Question}
\newtheorem{prob}[thm]{Problem}
\newtheorem{Def}[thm]{Definition}
\theoremstyle{definition}
\newtheorem{rem}[thm]{Remark}
\theoremstyle{remark}
\newcommand{\ssec}{\subsection}
\newcommand{\wt}{\widetilde}
\newcommand{\bP}{\mathbf{P}}
\newcommand{\bN}{\mathbf{N}}
\newcommand{\bC}{\mathbf{C}}
\newcommand{\bR}{\mathbf{R}}
\newcommand{\bQ}{\mathbf{Q}}
\newcommand{\bZ}{\mathbf{Z}}
\newcommand{\gO}{\Omega}
\newcommand{\ga}{\alpha}
\newcommand{\gk}{\kappa}
\newcommand{\gT}{\Theta}
\newcommand{\go}{\omega}
\newcommand{\gs}{\sigma}
\newcommand{\gD}{\Delta}
\newcommand{\cH}{\mathcal{H}}
\newcommand{\cD}{\mathcal{D}}
\newcommand{\cC}{\mathcal{C}}
\newcommand{\cU}{\mathcal{U}}
\newcommand{\cE}{\mathcal{E}}
\newcommand{\cJ}{\mathcal{J}}
\newcommand{\cV}{\mathcal{V}}
\newcommand{\cS}{\mathcal{S}}
\newcommand{\cZ}{\mathcal{Z}}
\newcommand{\cX}{\mathcal{X}}
\newcommand{\cY}{\mathcal{Y}}
\newcommand{\cO}{\mathcal{O}}
\newcommand{\ep}{\varepsilon}
\newcommand{\bH}{\mathbf{H}}
\newcommand{\fU}{\mathfrak{U}}
\newcommand{\colonec}{\mathrel{:=}}
\newcommand{\bss}{\backslash}
\newcommand{\const}{\mathrm{const}}
\newcommand{\Gal}{\mathrm{Gal}}
\newcommand{\Id}{\mathrm{Id}}
\newcommand{\pr}{\mathrm{pr}}
\newcommand{\Aut}{\mathrm{Aut}}
\newcommand{\mmin}{\mathrm{min}}
\newcommand{\Map}{\mathrm{Map}}
\newcommand{\Sing}[1]{\mathrm{Sing}\left( #1 \right)}
\newcommand{\dr}{\partial}
\newcommand{\cupp}{\mathbin{\smile}}
\renewcommand{\(}{\left(}
\renewcommand{\)}{\right)}
\newcommand{\hto}{\hookrightarrow}
\newcommand{\dto}{\dashrightarrow}
\newcommand{\<}{\mathopen{<}}
\renewcommand{\>}{\mathclose{>}}
\newcommand{\vast}{\bBigg@{4}}
\newcommand{\Vast}{\bBigg@{5}}
\newcommand{\xto}[1]{\xrightarrow{ #1 }}
\let\orgdescriptionlabel\descriptionlabel
\renewcommand*{\descriptionlabel}[1]{%
  \let\orglabel\label
  \let\label\@gobble
  \phantomsection
  \edef\@currentlabel{#1}%
  \let\label\orglabel
  \orgdescriptionlabel{#1}%
}
\tikzset{node distance=2cm, auto}
\numberwithin{equation}{section}
\title{Algebraic approximations of compact Kähler threefolds of Kodaira dimension 0 or 1} 
\author{Hsueh-Yung Lin}
\address{
Mathmatisches Institut \\
Universität Bonn \\
Endenicher Allee 60, Office 303 \\
53115 Bonn, Germany}
 \email{linhsueh@math.uni-bonn.de}
\begin{document}

\begin{abstract}
We prove that every compact Kähler threefold $X$ of Kodaira dimension $\kappa = 0$ or $1$ has a $\bQ$-factorial bimeromorphic model $X'$ with at worst terminal singularities such that for each curve $C \subset X'$, the pair $(X',C)$ admits a locally trivial algebraic approximation such that the restriction of the deformation of $X'$ to some neighborhood of $C$ is a trivial deformation. As an application, we prove that every compact Kähler threefold with $\kappa = 0$ or $1$ has an algebraic approximation. We also point out that in order to prove the existence of algebraic approximations of a compact Kähler threefold with $\kappa = 2$, it suffices to prove that of an elliptic fibration over a surface.
\end{abstract}

\maketitle

\section{Introduction}

From the point of view of the Hodge theory, compact Kähler manifolds can be considered as a natural generalization of smooth complex projective varieties. While an arbitrarily small deformation as a complex variety of a smooth complex projective variety might no longer be projective, a sufficiently small deformation of a Kähler manifold remains Kähler. The so-called Kodaira problem asks wether it is possible to obtain all compact Kähler manifolds through (arbitrarily small) deformations of projective varieties.

\begin{prob}[Kodaira problem]\label{prob-Kod}
Given a compact Kähler manifold $X$, does $X$ always admit an (arbitrarily small) deformation to some projective variety?
\end{prob}

In dimension 1, compact complex curves are already projective. For surfaces, Problem~\ref{prob-Kod} is known to have a positive answer, first due to Kodaira using the classification of compact complex surfaces~\cite{KodairaSurfaceII}, then to N. Buchdahl~\cite{Buch2} proving that any compact Kähler surface has an algebraic approximation using M. Green's density criterion (\emph{cf.} Theorem~\ref{thm-Grenndensecrit}). We refer to~\cite{CaoJApproxalg, GrafDefKod0, HYLbimkod1, ClaudonToridefequiv, Schrackdefo} for other positive results.

As for negative answers, C. Voisin constructed  in each dimension $\ge 4$ examples of compact Kähler manifolds which do not have the homotopy type of a smooth projective variety~\cite{Voisincs}, thus answered in particular negatively the Kodaira problem. Later on, she constructed in each even dimension $\ge 8$ examples of compact Kähler manifolds all of whose smooth bimeromorphic models are homotopically obstructed to being a projective variety~\cite{VoisinBiratKod}.

For threefolds, the Kodaira problem remains open at present. There are nevertheless positive results concerning a bimeromorphic variant of the Kodaira problem.

\begin{thm}[$\gk = 0$:~\cite{GrafDefKod0}, $\gk = 1$:~\cite{HYLbimkod1}]\label{thm-bimekod}
Let $X$ be a compact Kähler threefold of Kodaira dimension $\gk = 0$ or $1$. There exists a $\bQ$-factorial bimeromorphic model $X'$ of $X$ with at worst terminal singularities such that $X'$ has a locally trivial algebraic approximation.
\end{thm}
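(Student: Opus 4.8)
The plan is to reduce to a good birational model via the K\"ahler Minimal Model Program and then to verify the Hodge-theoretic density condition of Theorem~\ref{thm-Grenndensecrit}, treating $\gk = 0$ and $\gk = 1$ separately (this is essentially how the two cited references proceed). First, for either value of $\gk$ I would run the MMP for compact K\"ahler threefolds; since $\gk \geq 0$ it terminates with a $\bQ$-factorial minimal model $X'$ with terminal singularities and $K_{X'}$ nef. Terminal threefold singularities are isolated, so $X'$ is smooth in codimension $2$, and $H^2(X',\bC)$ carries a pure Hodge structure of weight $2$ with $H^{2,0}(X')$ computed by reflexive $2$-forms; this is what makes the Hodge theory of $H^2$ usable on the singular model. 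By abundance for K\"ahler threefolds, $K_{X'}$ is semiample and its Iitaka fibration $f \colon X' \to Y$ has $\dim Y = \gk$. The deformations I will use are \emph{locally trivial}, governed by $H^1(X', T_{X'})$ with $T_{X'}$ the tangent sheaf, so that the terminal $\bQ$-factorial singularities are preserved along the family.

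The Hodge-theoretic engine is Green's criterion (Theorem~\ref{thm-Grenndensecrit}): fixing a K\"ahler class $\go_0 \in H^{1,1}(X')$, arbitrarily small locally trivial deformations of $X'$ are projective as soon as the composite
\begin{equation*}
H^1(X', T_{X'}) \longrightarrow \Hom\!\left(H^{2,0}(X'), H^{1,1}(X')\right) \xrightarrow{\,\cup\,\go_0^2\,} \Hom\!\left(H^{2,0}(X'), \bC\right)
\end{equation*}
is surjective, where the first arrow is contraction $H^1(X',T_{X'}) \otimes H^{2,0}(X') \to H^{1,1}(X')$ of reflexive $2$-forms against first-order deformations, and the second uses the identification $H^{3,3}(X') \cong \bC$. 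So in each case the whole problem reduces to producing enough locally trivial first-order deformations of $X'$ to achieve this surjectivity, after which the real-analytic Noether--Lefschetz loci of rational classes near $\go_0$ sweep out a dense subset of the base and give the algebraic approximation.

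For $\gk = 0$ one has $K_{X'} \equiv 0$, indeed torsion. Here I would invoke the singular Beauville--Bogomolov decomposition: after a quasi-\'etale cover $\pi \colon \wt X \to X'$, the cover $\wt X$ splits as a product of a complex torus with factors that are either Calabi--Yau or of irreducible holomorphic-symplectic type. In dimension $3$ the symplectic factors cannot occur, so the building blocks are complex tori, Calabi--Yau threefolds (which have $h^{2,0} = 0$ and are therefore already projective), and products of an elliptic curve with a K3 surface. Each block is separately approximable --- complex tori deform to abelian varieties densely, and the surface factors are handled by Buchdahl's theorem on K\"ahler surfaces. The work is then to carry these deformations out $G$-equivariantly for $G = \Gal(\wt X / X')$ and to descend them to a locally trivial algebraic approximation of $X'$; equivalently, to verify the surjectivity above directly on $X'$, the torus factor being the one that genuinely contributes moduli to $H^{2,0}$.

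For $\gk = 1$ the Iitaka fibration $f \colon X' \to C$ maps to a smooth curve, with general fibre a minimal surface $S$ of Kodaira dimension $0$. The idea is to deform the fibres --- which are themselves approximable by the surface case --- together with the base and the gluing data, so as to tilt the period of $H^{2,0}(X')$; the variation of Hodge structure of the family of $\gk = 0$ surfaces over $C$ supplies, through $f$, the locally trivial deformations of $X'$ needed to feed Green's criterion. I expect the main obstacle to be exactly this production of global locally trivial deformations meeting the density condition on the \emph{singular} minimal model: in the $\gk = 0$ case it hides in the equivariant descent through the quasi-\'etale cover (matching the period of the torus factor with the $G$-action), and in the $\gk = 1$ case it hides in controlling the variation of Hodge structure across the singular fibres of $f$ while keeping the whole deformation locally trivial.
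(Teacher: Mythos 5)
First, note that the paper itself does not reprove this theorem: it is quoted from \cite{GrafDefKod0} ($\gk=0$) and \cite{HYLbimkod1} ($\gk=1$), and the argument actually carried out here is the stronger Theorem~\ref{thm-mainpair}, which contains it as the case $C=\emptyset$. Measured against that, your $\gk=0$ half is essentially the right route: MMP to a minimal model, reduction via a finite cover to a $3$-torus or a product of a K3 surface with an elliptic curve, the equivariant Green criterion (Theorem~\ref{thm-Gdensecrit}) on the smooth cover, and descent of the locally trivial family to the quotient. One simplification you miss: you do not need the singular Beauville--Bogomolov decomposition. The argument (see Proposition~\ref{pro-gk0MMP}) passes to the index-one cover and uses non-algebraicity to show that this cover is \emph{smooth}, so the classical decomposition applies; the Calabi--Yau factor is then excluded exactly as you say.

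The genuine gap is in your $\gk=1$ half. By Theorem~\ref{thm-gk1MMP} the general fibre $F$ of the Iitaka fibration $f\colon X'\to B$ is an abelian surface, a bielliptic surface, a K3 surface or a $2$-torus. In the first two cases $F$ is already projective, so ``deforming the fibres, which are themselves approximable by the surface case'' buys nothing: the non-algebraicity of $X'$ does not live in the fibres but in the global gluing, concretely in the class $\eta(f)\in H^1(B,\cJ)$ of the associated torus torsor being non-torsion. The deformations that produce projective members are not fibrewise variations of Hodge structure fed into Green's criterion on $H^2(X',\cO_{X'})$; they are the tautological family of $J$-torsors over $H^1(B,\cE)$ (as in Lemma~\ref{lem-multsec} and \cite{HYLbimkod1}), and the density statement is that the image of the rational points of $H^1(B,\bH_\bZ)$ meets the torsion classes densely --- which rests on Deligne's theorem that $H^1(B,\bH_\bZ)$ carries a pure Hodge structure, a different Hodge-theoretic input from Theorem~\ref{thm-Grenndensecrit}. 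In the K3 and $2$-torus fibre cases the fibration is isotrivial by Campana's theorem and becomes a product after a finite Galois base change; one then deforms the single surface fibre $G$-equivariantly and descends. Your sketch invokes neither the isotriviality nor the torsor mechanism, and gives no device for keeping the global deformation locally trivial across the singular fibres of $f$; as written, that half restates the difficulty rather than resolving it.
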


In order to prove Theorem~\ref{thm-bimekod}, thanks to the minimal model program (MMP) for Kähler threefolds~\cite{HorPet}, we can choose  $X'$ to be a minimal model of $X$, and this is what we did in most of the cases. Geometric descriptions of these varieties $X'$ can be obtained as an output of the abundance conjecture~\cite{CHPabun} applied to $X'$, which is enough to prove the existence of a locally trivial algebraic approximation for $X'$. 

The aim of this article is to prove the following stronger version of Theorem~\ref{thm-bimekod} by further exploiting the geometry of $X'$. We refer to Section~\ref{ssec-term} for the terminologies used in the statement of Theorem~\ref{thm-mainpair}.

\begin{thm}\label{thm-mainpair}
Let $X$ be a compact Kähler threefold of Kodaira dimension $\gk = 0$ or $1$. There exists a $\bQ$-factorial bimeromorphic model $X'$ with at worst terminal singularities such that whenever $C \subset X'$ is a curve or empty, the pair $(X',C)$ has a locally trivial and $C$-locally trivial algebraic approximation.
\end{thm}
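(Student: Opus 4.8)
The plan is to begin with the bimeromorphic model $X'$ furnished by Theorem~\ref{thm-bimekod}: a $\bQ$-factorial minimal model of $X$ with terminal singularities which, by abundance, is either a threefold with $K_{X'} \equiv 0$ (when $\gk = 0$) or carries an Iitaka fibration $f \colon X' \to B$ over a curve $B$ whose general fibre is a surface of Kodaira dimension $0$ (when $\gk = 1$). The task is to upgrade the locally trivial algebraic approximation already supplied by that theorem into one that is, in addition, trivial on a neighbourhood of an arbitrary curve $C \subset X'$. Throughout I would use the explicit geometric descriptions coming from abundance --- the Beauville--Bogomolov-type splitting after a finite quasi-\'etale cover when $K_{X'} \equiv 0$, and the classification of the $\gk = 0$ surface fibres of $f$ when $\gk = 1$ --- so that the problem reduces to deforming a short list of building blocks (complex tori, $K3$- and Enriques-type surfaces, and their products and quotients) in a way that can be localised away from $C$.

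The mechanism I would use is cohomological. Since any sufficiently small polydisc is Stein, every locally trivial deformation of $X'$ is automatically trivial to first order near a single point of $C$; the real content is to trivialise a chosen deformation on a whole neighbourhood $U$ of $C$ at once. The obstruction to this is the image of the Kodaira--Spencer class under the restriction map $H^1(X', T_{X'}) \to H^1(U, T_{X'}|_U)$, where $T_{X'} = \cHom(\Omega^1_{X'}, \cO_{X'})$ is the tangent sheaf governing locally trivial deformations at the terminal singularities. As $U$ may be taken to be a small tubular neighbourhood retracting onto the $1$-dimensional $C$, the target is small, so its kernel $K_C$ --- the infinitesimal deformations that become trivial near $C$ --- is large. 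The proposal is then to produce the approximation inside $K_C$, i.e. to show that projective members remain dense among the deformations parametrised by $K_C$.

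Density within $K_C$ is where the geometry enters, and I would establish it through the density criterion of M. Green (Theorem~\ref{thm-Grenndensecrit}): one fixes a K\"ahler class already rational on $U$, spreads it out over the constrained family, and checks that the derivative of the associated period map --- the cup product of the Kodaira--Spencer class with the K\"ahler class --- remains surjective onto the relevant Hodge piece after the Kodaira--Spencer class is restricted to $K_C$. Concretely the verification follows the fibration at hand. When $\gk = 1$ and $f(C)$ is a point, $C$ lies in a single fibre, and one deforms the fibres over $B \setminus \{f(C)\}$ while freezing a neighbourhood of the fibre through $f(C)$, so that triviality near $C$ is immediate and density comes from the remaining fibres and the base. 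When $f(C) = B$, i.e. $C$ is a multisection, the fibres must themselves be deformed; here one deforms each $\gk = 0$ surface fibre $F$ within the subspace of its own deformations that are trivial near the finite set $C \cap F$, which is possible precisely because $H^1$ of a small polydisc with coefficients in $T_F$ vanishes, and projective structures remain dense under this constraint. The case $\gk = 0$ is treated analogously through the splitting, deforming each factor inside the deformations trivial near the corresponding trace of $C$ and checking that the resulting product, and its descent along the quasi-\'etale cover, is again trivial near $C$.

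The step I expect to be the main obstacle is exactly this constrained density: showing that restricting the Kodaira--Spencer classes to $K_C$ does not destroy the surjectivity hypothesis of Green's criterion. The delicate points are, first, the behaviour at singular points of $X'$ lying on $C$, where ``trivial near $C$'' must be read in the locally trivial deformation category and the local cohomology of the reflexive tangent sheaf around those singularities must be controlled; and second, passing the constrained approximation through the finite quasi-\'etale covers of the $\gk = 0$ decomposition, which I would handle by choosing $U$ saturated under the cover and averaging the trivialisation over the Galois group, so that both local triviality and triviality near $C$ descend. Once the constrained density is in place, integrating the first-order trivialisation over $U$ to an actual product decomposition of the family --- using unobstructedness and, if necessary, shrinking $U$ --- yields the desired locally trivial and $C$-locally trivial algebraic approximation of the pair $(X', C)$.
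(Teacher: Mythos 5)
Your overall architecture (classify the minimal models, reduce to building blocks and their finite quotients, invoke Green's criterion on a constrained family) matches the paper's, but the central mechanism you propose does not work as stated, and it is precisely where the paper does something different. You propose to linearise the condition ``trivial near $C$'' as the kernel $K_C$ of $H^1(X',T_{X'})\to H^1(U,T_{X'}|_U)$ and then ``integrate the first-order trivialisation''. Two problems: first, there is no reason for $H^1(U,T_{X'}|_U)$ to be small --- coherent cohomology of a tubular neighbourhood of a curve in a threefold is not controlled by the homotopy type of $C$, so the claim that $K_C$ is large is unsupported; second, and more seriously, a deformation whose Kodaira--Spencer class dies on $U$ is only trivial to first order there, and nothing in your sketch produces an actual isomorphism of the family with a product over a neighbourhood of $\cC$. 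The paper never linearises this condition. Instead it constructs deformations that are trivial near $C$ \emph{by construction}: for fibrations over a curve with algebraic fibres it uses Campana's criterion to see that every curve of the non-algebraic total space lies in finitely many fibres, so a \emph{strongly} locally trivial approximation (Lemma~\ref{lem-sltaa}, via Lemma~\ref{lem-strloctrivG}) is automatically $C$-locally trivial; for a non-algebraic K3 with $a(S)=0$ it deforms along the Noether--Lefschetz locus of the $(-2)$-classes of the components of $C$, contracts the trees of $(-2)$-curves to rational double points, and uses the rigidity of germs of a fixed rational double point (Lemma~\ref{lem-K3aa}); the surjectivity in Green's criterion on the constrained locus is then a concrete Hodge-index computation ($v^2>0$ for $v\in\langle[C_i]\rangle^\perp$, hence $v\cdot[\go]\ne 0$), which you correctly flag as the main obstacle but do not supply.

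Your proposal also omits an ingredient without which several cases cannot even be started: the torsor-deformation step (Lemma~\ref{lem-multsec}). A non-algebraic $3$-torus all of whose curves are elliptic is an isotrivial elliptic fibration over a surface with no multisection, and a smooth isotrivial $2$-torus fibration over a curve need not have one either; the paper first performs an arbitrarily small strongly locally trivial ($G$-equivariant) deformation of the torsor to acquire a multisection, and only then reduces to a product $S\times\wt{B}$ on which the projection of $C$ to the surface factor can be treated. Your phrase ``deforming each factor inside the deformations trivial near the corresponding trace of $C$'' presupposes a product decomposition compatible with $C$ and with the $G$-action that simply is not there at the outset (and for the quotient step, one cannot ``average the trivialisation over the Galois group'' directly --- trivialisations are not a linear space; the paper averages lifted vector fields and integrates the resulting $G$-invariant flow, Lemma~\ref{lem-techG}). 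To repair your write-up you would need to replace the $K_C$ mechanism by explicit $C$-locally trivial constructions in each case and add the multisection and $(-2)$-contraction arguments.
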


We will also prove a result relating the type of algebraic approximation that $X'$ has in Theorem~\ref{thm-mainpair} and the algebraic approximation of $X$. 

\begin{pro}\label{pro-red}
Let $X$ be a compact Kähler threefold and $X'$ a normal bimeromorphic model of $X$. If $(X',C)$ has a locally trivial and $C$-locally trivial algebraic approximation whenever $C \subset X'$ is a curve or empty, then $X$ has an algebraic approximation.
\end{pro}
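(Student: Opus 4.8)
The plan is to transport the given algebraic approximation of $X'$ across the bimeromorphic map and to perform the bimeromorphic modification in families, the $C$-local triviality being precisely what makes this possible. Let $\phi\colon X \dto X'$ be the bimeromorphic map. Since $X$ is a manifold and $X'$ is normal, I would choose a curve (possibly empty) $C \subset X'$ containing $\mathrm{Sing}(X')$ together with the locus where $\phi^{-1}$ fails to be a local isomorphism; this locus has codimension $\ge 2$, hence is contained in a curve. Then $\phi$ restricts to a biholomorphism $X \setminus \phi^{-1}(C) \xrightarrow{\sim} X' \setminus C$, and in particular $X'$ is smooth away from $C$. Applying the hypothesis to this particular $C$, I obtain a locally trivial algebraic approximation $\pi\colon \cX' \to B$ of $X'$, with $\cX'_0 = X'$ and a sequence $t_n \to 0$ such that each $\cX'_{t_n}$ is projective, which is moreover $C$-locally trivial: there is an open $\cU \subseteq \cX'$ with $\cU \cong U \times B$ over $B$ and $\cU \cap X' = U$ an open neighborhood of $C$.

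Next I would build the approximating family of $X$ by surgery. Choose nested neighborhoods $U' \Subset U$ of $C$ with $\cU' \cong U' \times B$, set $W = U \setminus \overline{U'}$ (disjoint from $C$) and $\mathcal{W} = \cU \setminus \overline{\cU'} \cong W \times B$, and let $V = \phi^{-1}(U) \subset X$. Since $W \cap C = \emptyset$, the map $\phi^{-1} \times \mathrm{id}_B$ is a biholomorphism $\mathcal{W} \cong W \times B \cong \phi^{-1}(W) \times B$, so I may glue $\cX' \setminus \cU'$ to the trivial family $V \times B$ along $\mathcal{W}$. This produces a proper holomorphic family $\cX \to B$ whose central fiber is $\cX_0 = (X' \setminus U') \cup_W V = X$, the last identification being exactly the decomposition of $X$ along $\phi$. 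Because the surgery only alters the trivial region over $C$, each fiber $\cX_t$ is obtained from $\cX'_t$ by the one fixed modification $\phi^{-1}$; hence $\cX_t \dto \cX'_t$ is bimeromorphic.

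To conclude, since $X$ is smooth, after shrinking $B$ the total space $\cX$ and all nearby fibers $\cX_t$ are smooth. For $t = t_n$ the manifold $\cX_{t_n}$ is bimeromorphic to the projective manifold $\cX'_{t_n}$, hence Moishezon; being a small deformation of the Kähler manifold $X$ it remains Kähler; and a compact complex manifold that is simultaneously Moishezon and Kähler is projective. Thus the fibers $\cX_{t_n}$ are projective and accumulate at $0$, so $\cX \to B$ is an algebraic approximation of $X$.

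The main obstacle — and the reason the hypothesis is imposed for \emph{every} curve $C$ — is the $C$-local triviality: only because $\cX'$ is a product near $C$ can the modification $\phi^{-1}$, which is supported over $C$, be carried out simultaneously on all fibers to give a genuine deformation of $X$ rather than unrelated fiberwise surgeries. The points demanding care are that $C$ can indeed be chosen as a single curve capturing both $\mathrm{Sing}(X')$ and the indeterminacy of $\phi^{-1}$, so that the gluing annulus $W$ avoids $C$, and that the glued $\cX \to B$ is proper with smooth fibers near $0$; granting these, the passage to projectivity is formal via Moishezon's theorem.
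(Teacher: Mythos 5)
There is a genuine gap at the very first step: the claim that the locus in $X'$ where $\phi^{-1}$ fails to be a local isomorphism has codimension $\ge 2$. Normality of $X'$ (via Zariski's main theorem applied to a resolution $\nu : Z \to X'$ of $\phi^{-1}$) only gives that the image in $X'$ of the \emph{$\nu$-exceptional} set has codimension $\ge 2$; it does not prevent $\phi^{-1}$ from contracting divisors of $X'$. For instance, $X'$ could be the blow-up of $X$ along a smooth curve $\gamma$: this is a smooth, hence normal, bimeromorphic model, and $\phi^{-1}$ is the blow-down, which fails to be a local isomorphism exactly along the exceptional divisor $E \subset X'$. Here one has $X \setminus \gamma \simeq X' \setminus E$ but there is no curve $C \subset X'$ with $X \setminus \phi^{-1}(C) \simeq X' \setminus C$, so your surgery would need a trivialization of $\cX'$ near a \emph{divisor}, which the hypothesis does not supply. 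The same obstruction arises for any model $X'$ carrying a $\phi^{-1}$-exceptional divisor, so the argument as written does not cover the generality of the statement. A secondary point: even when the bad locus does have codimension $\ge 2$, its zero-dimensional components need not lie on any curve of $X'$ (a non-algebraic threefold may contain very few curves), so ``contained in a curve'' is not automatic; this is why the statement also assumes \emph{local triviality} of the approximation, which trivializes the family near any finite set of points --- a part of the hypothesis your argument never uses.

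The paper circumvents precisely this difficulty by interposing a resolution $X \xleftarrow{\ \eta\ } Z \xrightarrow{\ \nu\ } X'$ with $\eta$ a composition of blow-ups along smooth centers. The surgery you describe is then performed between $\cX'$ and $Z$ rather than between $\cX'$ and $X$ (this is Lemma~\ref{lem-defcontrpoint}, legitimate because the image of the exceptional set of $\nu$ really is of codimension $\ge 2$ by normality), yielding a deformation $\cZ \to \gD$ of $Z$. The extra divisors of $Z$ --- equivalently the $\phi^{-1}$-exceptional divisors of $X'$ --- are then contracted \emph{in families} not by gluing but by Ran's stability theorem for morphisms: since $\eta_*\cO_Z \simeq \cO_X$ and $R^1\eta_*\cO_Z = 0$, the deformation of $Z$ induces a deformation of the morphism $\eta : Z \to X$, hence a deformation of $X$ with fibers bimeromorphic to those of $\cX'$ (Lemma~\ref{lem-deform} and Corollary~\ref{cor-red}). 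The rest of your argument --- bimeromorphy of the perturbed fibers and Moishezon $+$ K\"ahler $\Rightarrow$ projective --- is sound, but the proof cannot be completed without an ingredient playing the role of Ran's theorem.
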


We refer to Corollary~\ref{cor-red} for a more general statement. Since $\bQ$-factorial varieties are normal by definition, putting Proposition~\ref{pro-red} together with Theorem~\ref{thm-mainpair} yields immediately the following result.

\begin{thm}\label{thm-main}
Every compact Kähler threefold of Kodaira dimension 0 or 1 has an algebraic approximation.
\end{thm}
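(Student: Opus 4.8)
The plan is simply to concatenate the two results that immediately precede this statement, since all of the genuine content has already been packaged into them. First I would take an arbitrary compact K\"ahler threefold $X$ with Kodaira dimension $\gk = 0$ or $1$ and apply Theorem~\ref{thm-mainpair} to produce a bimeromorphic model $X'$. That theorem guarantees that $X'$ may be chosen $\bQ$-factorial with at worst terminal singularities, and—crucially for what follows—that the pair $(X',C)$ admits a locally trivial and $C$-locally trivial algebraic approximation for every curve $C \subset X'$ as well as for the empty case.

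Next I would feed this model into Proposition~\ref{pro-red}. The proposition applies to any \emph{normal} bimeromorphic model of $X$, so the only hypothesis left to check is normality of $X'$; this is automatic, because $\bQ$-factorial varieties are normal by definition. The remaining assumption of Proposition~\ref{pro-red}, namely that $(X',C)$ has a locally trivial and $C$-locally trivial algebraic approximation whenever $C \subset X'$ is a curve or empty, is verbatim the conclusion of Theorem~\ref{thm-mainpair}. Hence the proposition produces an algebraic approximation of $X$ itself, which is exactly what the theorem asserts.

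Because both inputs are available, the argument is a formal deduction rather than a computation, and I expect no real obstacle at this stage: the geometric difficulty has been entirely absorbed into Theorem~\ref{thm-mainpair} (constructing a minimal-type model and controlling the deformation in a neighborhood of every curve) and into Proposition~\ref{pro-red} (descending an approximation of the pair $(X',C)$ through the bimeromorphic modification $X' \dashrightarrow X$). The single point that requires a moment's care is that the quantifier over $C$ in the hypothesis of Proposition~\ref{pro-red} matches precisely the quantifier in the conclusion of Theorem~\ref{thm-mainpair}, including the empty case $C = \emptyset$; once this matching is noted, the implication is immediate.
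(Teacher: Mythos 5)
Your proposal is correct and coincides exactly with the paper's own argument: the author likewise derives Theorem~\ref{thm-main} by combining Theorem~\ref{thm-mainpair} with Proposition~\ref{pro-red}, using the fact that $\bQ$-factorial varieties are normal to satisfy the normality hypothesis. Nothing further is needed.
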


As for threefolds of Kodaira dimension 2, since minimal models of such varieties are elliptic fibrations, the existence of algebraic approximations of these varieties is related to the following question.

\begin{que}\label{Q-ellpaa}
Let $f : Y \to B$ be an elliptic fibration where $Y$ is a compact Kähler and the base $B$ is smooth and projective. Assume that the locus $D \subset B$ parameterizing singular fibers of $f$ is normal crossing, does $Y$ have an algebraic approximation?
\end{que}

We will see that a positive solution of Question~\ref{Q-ellpaa} will eventually solve the Kodaira problem for threefolds of Kodaira dimension 2. 

\begin{pro}\label{pro-gk2aa}
If Question~\ref{Q-ellpaa} has a positive answer in the case where $B$ is a surface, then every compact Kähler threefold of Kodaira dimension 2 has an algebraic approximation.
\end{pro}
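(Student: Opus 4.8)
The plan is to reduce the Kodaira dimension 2 case to Question~\ref{Q-ellpaa} via the minimal model program and the abundance theorem for K\"ahler threefolds. Let $X$ be a compact K\"ahler threefold with $\gk(X) = 2$. First I would run the MMP for K\"ahler threefolds (using \cite{HorPet}) to obtain a $\bQ$-factorial minimal model $X_{\min}$ with at worst terminal singularities, together with a bimeromorphic map $X \dashrightarrow X_{\min}$. By abundance for K\"ahler threefolds \cite{CHPabun}, the canonical divisor $K_{X_{\min}}$ is semiample, so the Iitaka fibration is realized by a genuine morphism $g : X_{\min} \to S$ onto a normal surface $S$, with $\dim S = \gk(X) = 2$ and general fibers being elliptic curves. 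Thus $X_{\min}$ carries the structure of an elliptic fibration over a surface.

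The next step is to reduce to the normal-crossing, smooth-projective-base situation demanded by Question~\ref{Q-ellpaa}. Since $\gk(X_{\min}) = 2$ and the base is a surface over which $X_{\min}$ is an elliptic fibration, the base $S$ is of general type (more precisely its desingularization has nonnegative, and after accounting for the fibration structure, maximal Kodaira dimension), and in particular is a projective surface; so there is no K\"ahler-but-nonprojective obstruction on the base. I would pass to a suitable bimeromorphic modification: resolve the singularities of $S$ and perform further blow-ups to arrange that the discriminant locus $D \subset B$ of the elliptic fibration becomes a normal crossing divisor in a smooth projective surface $B$, replacing $X_{\min}$ by a corresponding smooth K\"ahler model $Y$ fibered over $B$. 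This puts us exactly in the hypotheses of Question~\ref{Q-ellpaa} with $B$ a surface.

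Granting a positive answer to Question~\ref{Q-ellpaa} for surface bases, $Y$ then admits an algebraic approximation. The final step is to transfer this approximation from $Y$ back to $X$. Here I would invoke the bimeromorphic-descent mechanism developed earlier in the paper: the chain of bimeromorphic maps $X \dashrightarrow X_{\min} \dashrightarrow Y$ is controlled by the same kind of reasoning as in Proposition~\ref{pro-red}, which shows that the existence of an appropriate (locally trivial) algebraic approximation on a bimeromorphic model descends to an algebraic approximation of the original variety. The point is that bimeromorphic transformations between K\"ahler threefolds are compositions of flops and blow-ups/blow-downs along curves, and deforming the base of the fibration together with its total space yields a deformation of $Y$ whose fibers are projective, which one then pushes through the bimeromorphic correspondence to $X$.

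The main obstacle I anticipate is the transfer step: ensuring that the algebraic approximation produced for $Y$ is compatible with the bimeromorphic map to $X$, i.e.\ that it is locally trivial (or $C$-locally trivial) along the exceptional loci so that Proposition~\ref{pro-red} (or its generalization Corollary~\ref{cor-red}) applies. A naive approximation of $Y$ need not deform the centers of the blow-ups and the flopping curves in a way that survives the descent; one must arrange the deformation of $Y$ to be trivial near these loci, which is precisely why the strengthened notion of $C$-local triviality in Theorem~\ref{thm-mainpair} was introduced. Reconciling the a priori given approximation of $Y$ from Question~\ref{Q-ellpaa} with these local triviality constraints, rather than the construction of the elliptic fibration itself, is where the real work lies.
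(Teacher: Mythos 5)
Your reduction to an elliptic fibration with normal crossing discriminant over a smooth projective surface is essentially the paper's first half, but the transfer step --- which you yourself flag as ``where the real work lies'' --- is left unresolved, and the route you gesture at would not close it. Question~\ref{Q-ellpaa} hands you only \emph{some} algebraic approximation of the smooth model $Y$, with no local triviality whatsoever along any prescribed locus; so Proposition~\ref{pro-red} and Corollary~\ref{cor-red}, which require a $C$-locally trivial approximation of the model sitting on the far side of the roof $X \leftarrow Z \rightarrow Y$, simply do not apply. There is no way to retrofit $C$-local triviality onto an approximation you are merely granted as a black box.

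The point you are missing is that no local triviality is needed if you build $Y$ on the \emph{near} side, i.e.\ so that it dominates $X$ by a bimeromorphic \emph{morphism}. Concretely: resolve $X \dashrightarrow X'$ (where $X' \to B'$ is the normal elliptic fibration produced by MMP and abundance) by a smooth $Y'$ with morphisms $\mu : Y' \to X$ and $\nu : Y' \to X'$; since $X'$ is normal, $\nu$ is an isomorphism outside a set of dimension $\le 1$, so $Y' \to B'$ is still an elliptic fibration. Then log-resolve $(B',D')$ to $(B,D)$ and desingularize $Y' \times_{B'} B$ to get $Y \to B$ with discriminant contained in the normal crossing divisor $D$, arranging the desingularization to be an isomorphism over $B \setminus D$. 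The composition $\eta : Y \to X$ is a bimeromorphic morphism between smooth compact K\"ahler threefolds, hence $\eta_*\cO_Y = \cO_X$ and $R^1\eta_*\cO_Y = 0$, and Ran's stability theorem \cite[Theorem 2.1]{RanStabMap} (exactly as in Lemma~\ref{lem-deform}) pushes \emph{any} deformation of $Y$ down to a deformation of the morphism $Y \to X$, hence of $X$; algebraic fibers of the family of $Y$ give algebraic fibers of the family of $X$ because the induced maps on fibers remain bimeromorphic. This is why the proposition needs no strengthening of Question~\ref{Q-ellpaa}, in contrast to the $\kappa = 0,1$ cases where the approximation is constructed on a model that $X$ does not dominate and the $C$-local triviality of Theorem~\ref{thm-mainpair} is genuinely required.
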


In view of~\cite[Theorem 1.1]{ClaudonToridefequiv} and~\cite[Theorem 1.6]{HYLbimkod1}, it is plausible that Question~\ref{Q-ellpaa} would have a positive answer. It is a work in progress of Claudon and Höring toward an answer to Question~\ref{Q-ellpaa}.

The article is organized as follows. We will first introduce in Section~\ref{sec-deformation} some deformation-theoretic terminologies including those appearing in Theorem~\ref{thm-mainpair} then prove some general results. In particular, we will prove Corollary~\ref{cor-red} and deduce Proposition~\ref{pro-red} from it. Next, we will turn to describing  minimal models of a compact Kähler threefold of Kodaira dimension $0$ or $1$ in Section~\ref{sec-bim}. According to these descriptions, we will choose some threefolds $X$ and prove in Section~\ref{sec-casparcas} that whenever $C \subset X$ is a curve or empty, the pair $(X,C)$ always has a $C$-locally trivial algebraic approximation. Based on these results, the proof of Theorem~\ref{thm-mainpair} will be concluded in Section~\ref{sec-concl}, where we also prove Proposition~\ref{pro-gk2aa}.

\section{Deformations}\label{sec-deformation}

\ssec{Terminologies}\label{ssec-term}
 \hfill
 
Let $X$ be a complex variety. A \emph{deformation} of $X$ is a surjective flat holomorphic map $\pi: \cX \to \gD$ containing $X$ as a fiber. We say that a deformation $\pi: \cX \to \gD$ is \emph{locally trivial} if for every $x \in \cX$, there exists a neighborhood $x \in \cU \subset \cX$ of $x$ such that if $U \colonec \pi^{-1}(\pi(x)) \cap \cU$, then $\cU$ is isomorphic to $U \times \pi(\cU)$ over $\pi(\cU)$. 

In this article, a \emph{fibration} is a surjective holomorphic map $f: X \to B$ with connected fibers. A deformation $\pi : \cX \to \gD$ of $X$ is called \emph{strongly locally trivial} with respect to the fibration structure $f : X \to B$ if $\pi$ has a factorization of the form
$$
\begin{tikzcd}[cramped, row sep = 20, column sep = 40]
\cX  \arrow[r, "q"] \arrow[d, "\pi", swap] & \gD \times B  \ar{dl}{\pr_1} \\
\gD &  \\
\end{tikzcd}
$$
such that the restriction of $q$ to $X$ onto its image coincides with $f$, and that for every $(t,b) \in \gD \times B$, there exist neighborhoods $b \in U \subset B$ and $t \in V \subset \gD$ such that $q^{-1}(V \times U)$ is isomorphic to $q^{-1}(\{t\} \times U) \times V$ over $V$.

Let $X$ be a complex variety and $C \subset X$ a subvariety of $X$. A \emph{$C$-locally trivial deformation of $(X,C)$} is a deformation $(\cX,\cC) \to \gD$ of the pair $(X,C)$ such that the deformation $(\cU,\cC) \to \gD$ restricted to some neighborhood $\cU \subset \cX$ of $\cC$  is isomorphic to the trivial deformation $\(U \times \gD, C \times \gD\) \to \gD$ with $U \colonec \cU \cap X$. An \emph{algebraic approximation} of the pair $(X,C)$ is a deformation $(\cX,\cC) \to \gD$ of $(X,C)$ such that there exists a sequence of points $(t_i)_{i\in \bN}$ in $\gD$ parameterizing algebraic members and converging to $o$, the point which parameterizes $(X,C)$.

 If $X$ is endowed with a $G$-action where $G$ is a group and $C$ is a $G$-invariant subvariety, then a \emph{$G$-equivariant deformation of the pair $(X,C)$} is a deformation $(\cX,\cC) \to \gD$ of $(X,C)$ such that the $G$-action on $X$ extends to an action on $\cX$ preserving each fiber of $\cX \to \gD$ and $\cC$.

\ssec{Locally trivial deformations and bimeromorphic transformations}

\hfill

The following lemma concerns the behaviour of $C$-locally trivial deformations of a pair $(X,C)$ under bimeromorphic transformations.

\begin{lem}\label{lem-defcontrpoint}
Let $f : X \to Y$ be a map between complex varieties and assume that there exists a subvariety $C \subset Y$ such that $f$ maps $X \bss D$ isomorphically onto $Y \bss C$ where $D \colonec f^{-1}(C)$.  Then for every $C$-locally trivial deformation $\pi : (\cY,\cC) \to \Delta$ of $Y$, there exists a $D$-locally trivial deformation $(\cX,\cD) \to \gD$ of the pair $\(X, D\)$ together with a map $F: \cX \to \cY$ over $\gD$ such that $F^{-1}(\cC) = \cD$ and that $F_{ | \cX \bss \cD}$ is an isomorphism onto $\cY \bss \cC$.
\end{lem}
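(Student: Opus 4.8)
The plan is to build $(\cX,\cD)$ by gluing two pieces: a copy of the open part $\cY \bss \cC$, on which $F$ will be an isomorphism, and a \emph{trivial} deformation $W \times \gD$ of a neighborhood $W$ of $D$, on which $F$ will recover the contraction $f$. The whole point is that the $C$-local triviality of $\pi$ produces a clean product picture near $\cC$, which is exactly what allows these two pieces to be glued coherently and which forces the resulting deformation of $(X,D)$ to be $D$-locally trivial.

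Concretely, I would first invoke the hypothesis that $\pi$ is $C$-locally trivial to fix a neighborhood $\cV \subset \cY$ of $\cC$ together with an isomorphism $(\cV,\cC) \eto (V \times \gD, C \times \gD)$ over $\gD$, where $V \colonec \cV \cap Y$. Set $W \colonec f^{-1}(V)$, an open neighborhood of $D$ in $X$. Since $f$ restricts to an isomorphism $X \bss D \eto Y \bss C$, it also restricts to an isomorphism $f_0 : W \bss D \eto V \bss C$. Composing $f_0 \times \Id_\gD$ with the trivialization yields a biholomorphism
$$
\phi : (W \bss D) \times \gD \eto (V \bss C) \times \gD \cong \cV \bss \cC \subset \cY \bss \cC
$$
between open subsets of $W \times \gD$ and of $\cY \bss \cC$. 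I then let $\cX$ be the complex space obtained by gluing $\cY \bss \cC$ and $W \times \gD$ along $\phi$, set $\cD \colonec D \times \gD \subset W \times \gD \subset \cX$, define $\pi_\cX : \cX \to \gD$ by $\pi$ on the first piece and by $\pr_2$ on $W \times \gD$ (they agree on the overlap), and define $F : \cX \to \cY$ by the inclusion $\cY \bss \cC \hto \cY$ on the first piece and by $(f \times \Id_\gD)$ followed by $V \times \gD \cong \cV \hto \cY$ on the second (consistent on the overlap by construction of $\phi$).

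Granting that $\cX$ is a genuine complex space, the remaining checks are formal. Flatness of $\pi_\cX$ is local and holds on each piece. The fiber over the point $o \in \gD$ parameterizing $Y$ is $(Y \bss C) \cup W$ glued along $W \bss D \cong V \bss C$, which is canonically identified with $X = (X \bss D) \cup W$; hence $\pi_\cX$ is a deformation of $X$. By construction $W \times \gD$ is an open neighborhood of $\cD$ that is $\gD$-isomorphic to the trivial deformation $(W \times \gD, D \times \gD)$, so $(\cX,\cD) \to \gD$ is $D$-locally trivial. Finally, on $\cY \bss \cC$ the map $F$ avoids $\cC$, while on $W \times \gD$ one computes $F^{-1}(\cC) = (f \times \Id_\gD)^{-1}(C \times \gD) = f^{-1}(C) \times \gD = \cD$; thus $F^{-1}(\cC) = \cD$ and $F$ identifies $\cX \bss \cD$ with $\cY \bss \cC$.

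The one genuinely technical point, and the step I expect to be the main obstacle, is to verify that the glued space $\cX$ is Hausdorff, i.e. that the graph of $\phi$ is closed in $(\cY \bss \cC) \times (W \times \gD)$. The only possible failure would come from a sequence $(w_n,t_n) \in (W \bss D) \times \gD$ with $(w_n,t_n) \to (w,t) \in D \times \gD$ while $\phi(w_n,t_n) \to a \in \cY \bss \cC$. But continuity of $f$ together with $f(D) \subseteq C$ forces $f(w_n) \to f(w) \in C$, so $\phi(w_n,t_n) \to (f(w),t) \in \cC$; since $\cY \bss \cC$ is open, the sequence cannot converge in $\cY \bss \cC$, a contradiction. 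This closedness is precisely where the hypotheses $D = f^{-1}(C)$ and the $C$-local triviality of $\pi$ are used; once it is established, $\cX$ is a complex analytic space and $\pi_\cX$, $F$ are holomorphic, so everything else is routine.
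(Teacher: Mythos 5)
Your proposal is correct and follows essentially the same route as the paper: both glue $\cY \bss \cC$ to the trivial deformation $f^{-1}(V)\times\gD$ of a neighborhood of $D$ along the product trivialization near $\cC$, with $\cD = D\times\gD$, and both reduce the only real issue to the Hausdorffness of the glued space (which the paper asserts and you verify via the sequence argument).
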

\begin{proof}
Let $\cU \subset \cY$ be a neighborhood of $\cC$ such that there exists an isomorphism over $\Delta$ of the pairs  
\begin{equation}\label{isom-neighpinYipair}
\(\cU , \cC\) \simeq \(U \times \Delta, C \times \Delta \)
\end{equation}
where $U \colonec Y \cap \cU$. So we can write
$$\cY \simeq \( (\cY \bss \cC) \sqcup \(U \times \gD \)\) {/} \sim$$
where $\sim$ glues the two pieces of the union using isomorphism~\eqref{isom-neighpinYipair}. Isomorphism~\eqref{isom-neighpinYipair} also implies that since $f$ maps $X \bss D$ isomorphically onto $Y \bss C$, we have over $\gD$
\begin{equation}\label{isom-neighpinYi}
\cU \bss \cC \simeq f^{-1}(U \bss C) \times \gD.
\end{equation}
We define 
$$\cX \colonec \( (\cY \bss \cC) \sqcup (f^{-1}(U)  \times \gD )\) \big{/} \sim $$
and
$$\cD \colonec D \times \gD \subset \cX$$
where $\sim$ glues the two pieces of the union using isomorphism~\eqref{isom-neighpinYi}. One easily checks that $\cX$ is Hausdorff so that $\cX$ is a complex variety. The map $\pi' : \cY  \bss \cC \to \Delta$ and the projection $\pi'' : f^{-1}(U)  \times \gD \to \gD$ give rise to a map 
$$\pi_X : (\cX, \cD) \to \Delta$$ 
which, by construction, is a $D$-locally trivial deformation of the pair $(X,D)$.  Finally the restriction of $f$ to $f^{-1}(U)$  defines an obvious map $ F: \cX \to \cY$ satisfying the property that $F^{-1}(\cC) = \cD$ and that $F_{ | \cX \bss \cD} : \cX \bss \cD \simeq \cY \bss \cC$.
\end{proof}

\begin{rem}
We can also show that given a $D$-locally trivial deformation $(\cX,\cD) \to \gD$ of the pair $\(X, D\)$, there exists a $C$-locally trivial deformation $(\cX,\cC) \to \gD$ of the pair $\(X, C\)$ together with a map $F: \cX \to \cY$ over $\gD$ such that $F^{-1}(\cC) = \cD$ and that $F_{ | \cX \bss \cD}$ is an isomorphism onto $\cY \bss \cC$. This can be proven by exchanging the role of $C$ and $D$ in the proof of Lemma~\ref{lem-defcontrpoint}.
\end{rem}

Let $X$ be a compact Kähler manifold. Assume that $X$ is bimeromorphic to a compact Kähler variety $Y$. After a sequence of blow-ups of $X$ along smooth centers, we obtain a resolution
\begin{equation}\label{map-res}
\begin{tikzcd}[cramped, row sep = 0, column sep = 20]
X & Z  \arrow[r, "\nu"] \ar[l, "\eta" , swap] & {Y} \\
\end{tikzcd}
\end{equation}
of the bimeromorphic map $X \dto Y$. Let $C \subset Y$ be the image of the exceptional set of $\nu$. The following lemma shows in particular that a $C$-locally trivial deformation of the pair $(Y,C)$ always induces a deformation of $X$.

\begin{lem}\label{lem-deform}
Suppose that  $\pi : (\cY,\cC) \to \Delta$ is a $C$-locally trivial deformation of the pair $(Y,C)$. Then up to shrinking $\gD$, the deformation $\pi$ induces a deformation
$$
\begin{tikzcd}[cramped, row sep = 0, column sep = 20]
\cX & \cZ  \arrow[r] \ar[l] & {\cY} \\
\end{tikzcd}
$$
of~\eqref{map-res}. 
\end{lem}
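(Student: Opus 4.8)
The plan is to build the deformed diagram in two halves, using Lemma~\ref{lem-defcontrpoint} applied to $\nu$ for the right-hand arrow and its Remark applied to $\eta$ for the left-hand arrow. Write $E \colonec \mathrm{Exc}(\nu)$ for the exceptional set of $\nu$, so that $C = \nu(E)$, and set $D \colonec \nu^{-1}(C) \supseteq E$. Since $\nu$ maps $Z \bss D$ isomorphically onto $Y \bss C$, Lemma~\ref{lem-defcontrpoint} applied to $\nu : Z \to Y$ and the given $C$-locally trivial deformation $\pi : (\cY,\cC) \to \gD$ produces, after possibly shrinking $\gD$, a $D$-locally trivial deformation $(\cZ,\cD) \to \gD$ of $(Z,D)$ together with a map $\cZ \to \cY$ over $\gD$ extending $\nu$, with $\cD$ the preimage of $\cC$ and $\cZ \bss \cD \simeq \cY \bss \cC$. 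This furnishes the right-hand arrow together with the total space $\cZ$.

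For the left-hand arrow I would use the converse construction recorded in the Remark following Lemma~\ref{lem-defcontrpoint}, applied this time to $\eta : Z \to X$. Set $B \colonec \eta(\mathrm{Exc}(\eta))$; since $\eta$ is an isomorphism over $X \bss B$ one has $\eta^{-1}(B) = \mathrm{Exc}(\eta)$, and $\eta$ maps $Z \bss \mathrm{Exc}(\eta)$ isomorphically onto $X \bss B$. The key geometric point is that $\mathrm{Exc}(\eta) \subseteq D = \nu^{-1}(C)$, i.e. every $\eta$-exceptional divisor is contracted by $\nu$: if a component $F$ of $\mathrm{Exc}(\eta)$ were not $\nu$-exceptional, then $\nu(F)$ would be a divisor of $Y$ contracted by the bimeromorphic map $Y \dto X$, which does not occur in our situation (here $C$ has codimension $\ge 2$ in $Y$, and no $\eta$-exceptional divisor dominates a divisor of $Y$ lying outside $C$). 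Granting this inclusion, the product structure of $\cZ$ near $\cD$ restricts to a product structure near the deformation of $\mathrm{Exc}(\eta)$, so that $\cZ$ is in particular a $\mathrm{Exc}(\eta)$-locally trivial deformation of $(Z,\mathrm{Exc}(\eta))$. The Remark then yields, after shrinking $\gD$ if necessary, a $B$-locally trivial deformation $\cX$ of $X$ together with a map $\cZ \to \cX$ over $\gD$ extending $\eta$. Assembling the two halves, $\cX \leftarrow \cZ \to \cY$ is a deformation of~\eqref{map-res}.

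I expect the main obstacle to be precisely the inclusion $\mathrm{Exc}(\eta) \subseteq \nu^{-1}(C)$, which is exactly what allows the contraction $\eta$ to be performed fibrewise using the trivial product structure of $\cZ$ near $\cD$. If some $\eta$-exceptional divisor failed to be $\nu$-exceptional, it would lie in the region where $\cZ$ merely agrees with the arbitrary deformation $\cY$, and contracting it in the family would no longer follow from local triviality alone; one would then have to invoke the stability of such blow-downs under small deformation, namely that the exceptional divisor of a blow-up along a smooth centre stays contractible in a nearby fibre. This is also why the conclusion can only be claimed up to shrinking $\gD$, both for the gluing in Lemma~\ref{lem-defcontrpoint} and for the persistence of the contraction.
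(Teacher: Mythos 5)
Your first step coincides with the paper's: apply Lemma~\ref{lem-defcontrpoint} to $\nu$ to produce $\cZ \to \cY$. The gap is in the second step. Your construction of the left-hand arrow hinges on the inclusion $\mathrm{Exc}(\eta) \subseteq \nu^{-1}(C)$, which you assert but do not prove, and which is false in the generality in which the lemma is stated and used. The lemma assumes only that $Y$ is a compact K\"ahler variety bimeromorphic to $X$ and that $\eta$ is a composition of blow-ups along smooth centres; Proposition~\ref{pro-red} then invokes it for an arbitrary normal bimeromorphic model $X'$. Nothing forces the inverse map $Y \dto X$ to contract no divisors. For a concrete failure, take $Y$ smooth containing two disjoint smooth curves $\Gamma_1, \Gamma_2$, let $X \colonec \mathrm{Bl}_{\Gamma_1} Y$, and consider the bimeromorphic map $X \dto Y' \colonec \mathrm{Bl}_{\Gamma_2} Y$: then $Z = \mathrm{Bl}_{\Gamma_1,\Gamma_2} Y$, the divisor $E_2$ over $\Gamma_2$ is $\eta$-exceptional but not $\nu$-exceptional, and $\nu^{-1}(C) = E_1$ does not contain $E_2$. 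In that situation $\cZ$ agrees near $\cE_2$ with the given, a priori arbitrary, deformation $\cY$, so there is no product structure there and the Remark after Lemma~\ref{lem-defcontrpoint} cannot be applied to $\eta$. Your own closing caveat identifies exactly this scenario, but treats it as a contingency rather than as the case that must be handled.

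The paper's proof avoids the issue entirely: for the left-hand arrow it does not use local triviality at all, but invokes Ran's stability theorem for morphisms, using that $\eta_*\cO_Z \simeq \cO_X$ and $R^1\eta_*\cO_Z = 0$ because $\eta$ is a composition of blow-ups along smooth centres; this produces, after shrinking $\gD$, a deformation $\cZ \to \cX$ of $\eta$ from the deformation $\cZ \to \gD$ alone. To repair your argument you would either have to restrict to bimeromorphic maps whose inverse contracts no divisors (and actually prove the inclusion in that setting), or replace your second step by such a stability statement for blow-downs in families --- at which point you have essentially reproduced the paper's argument.
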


\begin{proof}

Since $\nu$ maps $\nu^{-1}(Y \bss C)$ isomorphically onto $Y \bss C$ and since $(\cY,\cC) \to \gD$ is a $C$-locally trivial deformation of the pair $(Y,C)$, by Lemma~\ref{lem-defcontrpoint} there exists a deformation $\cZ \to \gD$ of $Z$ and a map $F: \cZ \to \cY$ over $\gD$ whose restriction to the central fiber is $\nu : Z \to Y$. 

As $\eta_*\cO_{Z} \simeq \cO_X$ and $R^1\eta_* \cO_{Z} = 0$ since $\eta$ is a composition of blow-ups along smooth centers, by~\cite[Theorem 2.1]{RanStabMap} the deformation $\cZ \to \gD$ of $Z$ induces a deformation $\cZ \to \cX$ of the morphism $Z \to X$ over $\gD$ up to shrinking $\gD$.
\end{proof}

The following is an immediate consequence of Lemma~\ref{lem-deform}.

\begin{cor}\label{cor-red}
With the same notation as above, if $Y$ has a $C$-locally trivial algebraic approximation, then $X$ also has an algebraic approximation. In particular, if $Y$ is normal and satisfies the property that for every subvariety $C \subset Y$ whose irreducible components are all of codimension $\ge 2$, the pair $(Y,C)$ has a $C$-locally trivial algebraic approximation, then $X$ also has an algebraic approximation.
\end{cor}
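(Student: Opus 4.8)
The plan is to feed a $C$-locally trivial algebraic approximation of $Y$ into Lemma~\ref{lem-deform} and to verify that the deformation of $X$ that comes out is again an algebraic approximation. Concretely, suppose $\pi : (\cY,\cC) \to \gD$ is a $C$-locally trivial algebraic approximation of $(Y,C)$, so that there is a sequence $t_i \to o$ in $\gD$ for which $Y_{t_i} \colonec \pi^{-1}(t_i)$ is projective. By Lemma~\ref{lem-deform}, after shrinking $\gD$ we obtain a deformation
$$
\begin{tikzcd}[cramped, row sep = 0, column sep = 20]
\cX & \cZ  \arrow[r] \ar[l] & {\cY} \\
\end{tikzcd}
$$
of the resolution diagram~\eqref{map-res} over $\gD$. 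In particular $\cX \to \gD$ is a (flat) deformation of $X$ with central fibre $X$, the map $\cZ \to \cY$ is the map $F$ produced in Lemma~\ref{lem-defcontrpoint}, hence restricts fibrewise to an isomorphism over the complement of $\cC$, and $\cZ \to \cX$ is a deformation of $\eta$. Since $X$ is a compact K\"ahler \emph{manifold}, smoothness is open in the flat family $\cX \to \gD$, so after shrinking $\gD$ every fibre $X_t$ is a compact complex manifold, and by the stability of the K\"ahler condition under small deformations it is moreover K\"ahler; likewise each $Z_t$ is smooth.

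The key step is the transfer of projectivity along the sequence $(t_i)$. For each $i$ the map $\nu_{t_i} : Z_{t_i} \to Y_{t_i}$ is bimeromorphic (it is an isomorphism over $Y_{t_i} \bss C_{t_i}$, by the previous paragraph), and $\eta_{t_i} : Z_{t_i} \to X_{t_i}$ is a surjective, generically finite morphism between compact manifolds of the same dimension. As $Y_{t_i}$ is projective, hence Moishezon, bimeromorphic invariance of the Moishezon property gives that $Z_{t_i}$ is Moishezon; and since $X_{t_i}$ is dominated by the Moishezon manifold $Z_{t_i}$ through a generically finite surjection, $X_{t_i}$ has maximal algebraic dimension as well, i.e.\ it is Moishezon. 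Being at once Moishezon and K\"ahler, $X_{t_i}$ is projective by Moishezon's theorem. As $t_i \to o$, the deformation $\cX \to \gD$ is therefore an algebraic approximation of $X$, which proves the first assertion.

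For the ``in particular'' statement I would first arrange that the hypothesis applies to the geometrically relevant $C$. Since $Y$ is normal and $\nu : Z \to Y$ is bimeromorphic, $\nu$ is an isomorphism in codimension one, so $C = \nu(\mathrm{Exc}(\nu))$ is a subvariety all of whose irreducible components have codimension $\ge 2$ in $Y$. The assumption then furnishes a $C$-locally trivial algebraic approximation of the pair $(Y,C)$, and the first assertion applies verbatim to conclude that $X$ has an algebraic approximation.

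The main obstacle is the middle step: one must ensure that the fibrewise diagram really consists of (bi)meromorphic maps so that the Moishezon property propagates along $Z_{t_i} \to Y_{t_i}$ and $Z_{t_i} \to X_{t_i}$, and that the deformed fibres $X_t$ genuinely remain K\"ahler manifolds. It is precisely the implication ``Moishezon $+$ K\"ahler $\Rightarrow$ projective'' that upgrades the algebraicity of $Y_{t_i}$ to that of $X_{t_i}$; once the bimeromorphic nature of $\nu_{t}$ (from Lemma~\ref{lem-defcontrpoint}) and the equidimensionality of $\eta_{t}$ are in place, everything else is formal given Lemma~\ref{lem-deform}.
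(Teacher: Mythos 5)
Your argument is correct and follows essentially the same route as the paper: feed the $C$-locally trivial algebraic approximation into Lemma~\ref{lem-deform}, observe that after shrinking $\gD$ the fibrewise maps $Z_t \to Y_t$ and $Z_t \to X_t$ remain bimeromorphic so that algebraicity of $Y_{t_i}$ transfers to $X_{t_i}$, and for the second assertion use normality of $Y$ to see that $\nu(\mathrm{Exc}(\nu))$ has all components of codimension $\ge 2$. The only difference is that you spell out the transfer step via ``Moishezon $+$ K\"ahler $\Rightarrow$ projective,'' which the paper leaves implicit in the word ``algebraic.''
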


\begin{proof}
Let $\cY \to \gD$ be a $C$-locally trivial algebraic approximation of $Y$ and let
$$
\begin{tikzcd}[cramped, row sep = 0, column sep = 20]
\cX & \cZ  \arrow[r] \ar[l] & {\cY} \\
\end{tikzcd}
$$
be the induced deformation of~\eqref{map-res} as in Lemma~\ref{lem-deform}. Up to shrinking $\gD$ we can suppose that for each $t \in \gD$, the fibers $\cZ_t \to \cX_t$ and $\cZ_t \to \cY_t$ of the maps $\cZ \to \cX$ and $\cZ \to \cY$ over $t$  are both bimeromorphic. Therefore if over a point $t \in \gD$ the variety $\cY_t$ is algebraic, then $\cX_t$ is also algebraic.

For the last statement of Corollary~\ref{cor-red}, the normality of $Y$ implies that each irreducible component of the image in $Y$ of the exceptional set $E$ of $\nu$ is of codimension $\ge 2$. Thus $(Y,\nu(E))$ has a $\nu(E)$-locally trivial algebraic approximation by assumption. We conclude by the first part of Corollary~\ref{cor-red} that $X$ has an algebraic approximation. 
\end{proof}

\begin{proof}[Proof of Proposition~\ref{pro-red}]
Assume that $X'$ satisfies the hypothesis made in the proposition. Let $C \colonec (C_0 \sqcup C_1) \subset X'$ be a subvariety of dimension $\le 1$ where $C_i$ denotes the union of the irreducible components of $C$ of dimension $i$. Since $\dim C_0 = 0$, a locally trivial deformation of $X'$ induces in particular a $C_0$-locally trivial deformation of $(X',C_0)$. Hence by assumption, the pair $(X',C)$ has a $C$-locally trivial  algebraic approximation. It follows from the second part of Corollary~\ref{cor-red} that $X$ has an algebraic approximation.
\end{proof}

\ssec{$G$-equivariant locally trivial deformations}

\hfill

The following lemma shows that given a $G$-equivariant $C$-locally trivial deformation $(\cX,\cC) \to \gD$ of $(X,C)$, there always exists a \emph{$G$-equivariant} trivialization of some neighborhood of $\cC$. This will imply that the quotient $(\cX/G,\cC/G) \to \gD$ is a $C/G$-locally trivial deformation of $(X/G,C/G)$.

\begin{lem}\label{lem-Gloctriv}
Let $X$ be a smooth complex variety and $G$ a finite group acting on $X$. Let $C$ be a $G$-invariant subvariety of $X$ and assume that there exists a $G$-equivariant deformation of $\pi : \cX \to \gD$ of $X$ over a one-dimensional base $\gD$.  Assume also that there exists an open subset $\cV \subset \cX$ and an isomorphism $\cV \simeq V \times \gD$ over $\gD$ where $V \colonec \cV  \cap X$ such that $V$ contains $C$ (this hypothesis holds for instance, when $\pi$ induces a $G$-equivariant $C$-locally trivial deformation of $(X,C)$), then up to shrinking $\gD$, there exist $\cC \subset \cX$, a $G$-invariant neighborhood $\cU$ of $\cC$, and a $G$-equivariant isomorphism
$$(\cU,\cC) \simeq (U \times \gD, C \times \gD)$$
over $\gD$ where $U \colonec \cU \cap X$. 

In particular, $\pi: (\cX, \cC) \to \gD$ is a $G$-equivariant $C$-locally trivial deformation of $(X,C)$ and the quotient $(\cX/G,\cC/G) \to \gD$ is a locally trivial and $C/G$-locally trivial deformation of $(X/G,C/G)$.
\end{lem}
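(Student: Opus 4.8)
The plan is to reduce everything to the construction of a single $G$-invariant holomorphic vector field lifting the coordinate field on $\gD$, and then to integrate it. Over the one-dimensional base $\gD$ with coordinate $t$, giving a trivialization $\cW \simeq W \times \gD$ over $\gD$ of an open set $\cW \subset \cX$ (with $W \colonec \cW \cap X$) amounts to giving a holomorphic vector field $\xi$ on $\cW$ with $d\pi(\xi) = \partial_t$: one direction pulls back $\partial_t$ along the trivialization, the other integrates $\xi$. Thus it suffices to produce such a lift on a $G$-invariant neighborhood of $C$ that is moreover invariant under $G$, since the flow of a $G$-invariant lift automatically intertwines the $G$-actions.

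To build the invariant lift I would average. Let $\xi$ be the lift of $\partial_t$ on $\cV$ coming from the given trivialization $\cV \simeq V \times \gD$. For $g \in G$ the pushforward $g_*\xi$ is a holomorphic vector field on $g(\cV)$, and since $g$ preserves the fibers of $\pi$, i.e. $\pi \circ g = \pi$, one checks directly that $d\pi(g_*\xi) = \partial_t$ as well. On the $G$-invariant open set $\cV' \colonec \bigcap_{g \in G} g(\cV)$, which still contains the $G$-invariant subvariety $C$ because $C \subset V \subset \cV$, the average
$$
\wt\xi \colonec \frac{1}{|G|}\sum_{g \in G} g_*\xi
$$
is a well-defined holomorphic vector field, is $G$-invariant by construction, and still satisfies $d\pi(\wt\xi) = \partial_t$ as a convex combination of lifts.

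Next I would integrate $\wt\xi$. Choosing a $G$-invariant neighborhood $U$ of $C$ in $X \cap \cV'$ and shrinking $\gD$ so that the integral curves of $\wt\xi$ issuing from $U$ in the central fiber $X$ exist up to time $t$ for all $t \in \gD$, the time-$t$ flow defines a holomorphic map $\Phi \colon U \times \gD \to \cX$ over $\gD$ which is a biholomorphism onto an open subset $\cU \subset \cX$; set $\cC \colonec \Phi(C \times \gD) \subset \cU$. Because $\wt\xi$ is $G$-invariant and $g$ preserves the flow time (which coincides with the $\pi$-coordinate), the flow satisfies $g \cdot \Phi(x,t) = \Phi(gx,t)$, so $\Phi$ is $G$-equivariant for the action $g \cdot (x,t) = (gx,t)$ on $U \times \gD$. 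In particular $\cU$ and $\cC$ are $G$-invariant and $\Phi$ furnishes the desired $G$-equivariant isomorphism $(\cU,\cC) \simeq (U \times \gD, C \times \gD)$.

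Finally, the two concluding assertions follow. The first is a restatement of what has been built. For the quotient, the $G$-equivariant isomorphism descends, since $G$ acts trivially on $\gD$, to $(\cU/G, \cC/G) \simeq ((U/G) \times \gD, (C/G) \times \gD)$, which gives the $C/G$-local triviality near $\cC/G$; the local triviality of $\cX/G \to \gD$ at an arbitrary point is obtained by rerunning the averaging construction in a $G$-invariant neighborhood of that point, wherever $\pi$ is locally trivial. I expect the main subtlety to lie precisely in this equivariance: the averaged field $\wt\xi$ must be defined on a $G$-invariant neighborhood of $C$ and its flow must exist uniformly after shrinking $\gD$, and at fixed points of the $G$-action the descent to the quotient relies essentially on having a genuinely $G$-equivariant trivialization rather than merely a local one.
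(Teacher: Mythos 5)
Your proposal is correct and follows essentially the same route as the paper: its technical Lemma~\ref{lem-techG} likewise forms the $G$-invariant combination (there a sum rather than an average) of the horizontal lift of $\partial_t$ on $\cV^G=\bigcap_{g\in G}g(\cV)$ and integrates the resulting invariant vector field, via Kaup's theorem on local one-parameter group actions, rescaling the flow time by $|G|$ to compensate. The one point you leave implicit is that $U$ must be taken relatively compact in $V^G$ so that the flow exists for a uniform time after shrinking $\gD$ (exactly how the paper arranges it), and for the local triviality of $\cX/G\to\gD$ away from $\cC$ the paper invokes smoothness of $X$ together with~\cite[Proposition 8.2]{GrafDefKod0} instead of re-running the averaging argument.
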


Before proving Lemma~\ref{lem-Gloctriv}, let us first prove a technical lemma.

\begin{lem}\label{lem-techG}
Let $G$ be a finite group acting on a variety $X$ and let $\pi: \cX \to \gD$ be a $G$-equivariant deformation of $X$ over a one-dimensional base. Let $\cV \subset \cX$ be an open subset such that there exists an isomorphism $\cV \simeq V \times \gD$ over $\gD$ where $V \colonec \cV \cap X$. Let 
$$\cV^G \colonec \bigcap_{g \in G} g(\cV).$$ 
Then for every $G$-invariant relatively compact subset $U \subset V^G \colonec \cV^G \cap X$, up to shrinking $\gD$ there exists a $G$-invariant subset $\cU$ of $\cV^G$ and a \emph{$G$-equivariant} isomorphism $\cU \simeq U \times \gD$ over $\gD$.
\end{lem}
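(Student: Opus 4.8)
The plan is to manufacture the desired $G$-equivariant trivialization by \emph{averaging a trivializing vector field over $G$} and then integrating the resulting $G$-invariant field. This is cleaner than trying to average the trivializing maps directly, since vector fields form a module on which averaging makes sense, whereas maps into a variety do not.

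Write $\phi : \cV \simeq V \times \gD$ for the given isomorphism over $\gD$, and let $\xi$ be the holomorphic vector field on $\cV$ corresponding under $\phi$ to the field $\partial_t$ generating the $\gD$-factor of $V \times \gD$. By construction $\xi$ lifts the base, $d\pi(\xi) = \partial_t$, and the flow of $\xi$ is nothing but the translation $\phi^{-1}(v,s) \mapsto \phi^{-1}(v,s+t)$ realizing $\phi$. Now set $\bar\xi := \frac{1}{|G|}\sum_{g \in G} g_* \xi$. Since the deformation is $G$-equivariant, every $g \in G$ preserves the fibers of $\pi$, i.e. $\pi \circ g = \pi$; hence $d\pi(g_*\xi) = \partial_t$ for each $g$, so $\bar\xi$ is again a holomorphic lift of $\partial_t$. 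It is defined precisely on $\cV^G = \bigcap_{g} g(\cV)$ --- the locus where all the $g_*\xi$ make sense --- and it is $G$-invariant, because $g_*\bar\xi = \frac{1}{|G|}\sum_{h} (gh)_*\xi = \bar\xi$.

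It remains to integrate $\bar\xi$. As $\bar\xi$ lifts $\partial_t$, its flow $\Phi_t$ carries the fiber over $s$ into the fiber over $s+t$; starting from $U \subset V^G$, where $\pi \equiv 0$, it produces points over $t$. Here the hypothesis that $U$ is relatively compact in $V^G$ is used in an essential way: by compactness of $\ol U$ there is a fixed smaller disc, to which we shrink $\gD$, such that $\Phi_t(u)$ is defined and stays inside $\cV^G$ for all $u \in \ol U$ and all $t \in \gD$. Then $\Psi : U \times \gD \to \cX$, $(u,t) \mapsto \Phi_t(u)$, has image a subset $\cU \subset \cV^G$, and the identity $\pi \circ \Psi(u,t) = t$ shows that $\Psi : U \times \gD \simeq \cU$ is an isomorphism over $\gD$. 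Finally the $G$-invariance $g_*\bar\xi = \bar\xi$ translates into $g \circ \Phi_t = \Phi_t \circ g$, so $\Psi(gu,t) = g\,\Psi(u,t)$; since $U$ is $G$-invariant, so is $\cU$, and $\Psi$ is $G$-equivariant, which is exactly what we want.

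The main obstacle is precisely this integration step: one must know that the flow of the averaged field exists over the whole (shrunk) base and does not leave the open set $\cV^G$ on which $\bar\xi$ is defined, and this is what the relative compactness of $U$ secures, through a uniform lower bound on the time of existence. One small point to record is the singular case: when $X$ is not smooth, $\partial_t$, $\xi$ and $\bar\xi$ should be read as derivations of the structure sheaf, and the flow lifting the base coordinate still exists on the reduced complex space over a sufficiently small disc; in the situation where Lemma~\ref{lem-Gloctriv} applies, both $X$ and $\cX$ are smooth, so this reduces to the usual Ehresmann-type integration of a holomorphic vector field.
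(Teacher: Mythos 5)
Your proposal is correct and follows essentially the same route as the paper: the paper also averages the trivializing vertical-lift vector field over $G$ (without the $1/|G|$ normalization, which it compensates for by rescaling the base coordinate), integrates it to a local $\bC$-action using Kaup's theorem on complex spaces, and uses the relative compactness of $U$ in $V^G$ to get a uniform existence time. The only difference is cosmetic normalization, and your closing remark about the singular case is exactly the role played by the citation of Kaup in the paper.
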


\begin{proof}
We may assume that $V^G \ne \emptyset$.
Since $\cV^G$ is open by finiteness of $G$, after shrinking $\gD$ we can also assume that the restriction of $\pi$ to $\cV^G$ is surjective and that $\gD$ is isomorphic to the open unit disc $B(0,1) \subset \bC$ such that $0$ parameterizes the central fiber $X$. Fix a generator $\frac{\dr}{\dr t}$ of the space of constant vector fields $\Gamma(\gD,T_{\gD})_{\const} \simeq \bC$ on $\gD$. For $z \in \bC$, let $z \frac{\dr}{\dr t} \in \Gamma(\gD,T_{\gD})_{\const}$ denote the corresponding vector field. 

By identifying $\cV^G$ with a subset of $V \times \gD $ through the isomorphism $\cV \simeq V \times \gD$, we can define the homomorphism of Lie algebras
\begin{equation}
\begin{split}
\xi : \bC & \to  \Gamma(\cV^G , T_{\cV^G}) \\
z & \mapsto  \sum_{g \in G} g^* \(\chi(z)  _{| \cV^G}\), 
\end{split}
\end{equation}
where $\chi (z )$ is the vector field on $V \times \gD$ which projects to $z  \frac{\dr}{\dr t}$ in $\gD$ and to $0$ in $V$.  
By~\cite[Satz 3]{Kaup} (see also~\cite[Theorem 5.3]{GrafDefKod0}), there exists a local group action 
$$\Phi : \gT \to \cV^G$$ 
of $\bC$ on $\cV^G$ inducing $\xi$, where $\gT \subset \bC \times \cV^G$ is a neighborhood of $\{0\} \times \cV^G$. We recall that the meaning of a local group action is the following.
\begin{enumerate}[i)]
\item For all $x \in \cV^G$, the subset $\gT \cap \(\bC  \times \{x\}\)$ is connected.
\item $\Phi(0,\bullet)$ is the identity map on $\cV^G$.
\item $\Phi(gh,x) = \Phi(g,\Phi(h,x))$ whenever it is well-defined.
\item The morphism of Lie algebras $ \bC  \to  \Gamma(\cV^G , T_{\cV^G})$ induced by $\Phi$ coincides with $\xi$.
\end{enumerate}
Since  the vector field $\xi(z)$ is $G$-invariant for all $z \in \bC$ by construction, the map $\Phi$ is also $G$-equivariant (where $G$ acts trivially on $\bC$). Also since $G$ acts on $\cV^G \to \gD$ in a fiber-preserving way, the projection of $\xi(z)$ in $\Gamma(\cV^G, \pi^*T_{\gD})$ equals $|G| \cdot p_2^* \(z  \frac{\dr}{\dr t} \)$. Hence if $\Phi_\gD$ denotes the local group action on $\gD$ defined by
\begin{equation*}
\begin{split}
\Phi_\gD : (\Id_{\bC} \times \pi)(\gT)  & \to \gD \\
(x,b) & \mapsto b + |G|\cdot x
\end{split}
\end{equation*}
then we have the following commutative diagram.
 \begin{equation}\label{diag-locGact}
\begin{tikzcd}[cramped, column sep = 20]
\gT \arrow[r, "\Phi"] \ar[d] & \cV^G \ar[d, "\pi"] \\
(\Id_{\bC} \times \pi)(\gT) \ar[r, "\Phi_{\gD}"] & \gD\\
\end{tikzcd}
\end{equation}
By the relative compactness of $U$ inside $V^G$, there exists $\ep > 0$ such that 
$$\fU \colonec B(0,\ep) \times U \subset \gT.$$ 
The restriction of $\Phi$ to $\fU$ is isomorphic onto its image. We verify easily with the help of~\eqref{diag-locGact} and the properties ii) and iii) that the inverse of $\Phi : \fU \to \Phi(\fU)$ is

\begin{equation*}
\begin{split}
\Psi : \Phi(\fU) & \to  \fU\\
v & \mapsto \( \frac{\pi(v)}{|G|}, \Phi\(-\frac{\pi(v)}{|G|}, v\)\).
\end{split}
\end{equation*}

Let $\cU \colonec \Phi\(B\(0,\frac{\ep}{|G|}\) \times U \) \subset \cV^G$. We have $U \colonec \cU \cap X$ by  ii) and up to replacing $\gD$ by $B(0,{\ep})$,  we have thus by construction an isomorphism 
\begin{equation*}
\begin{split}
U \times \gD & \xto{\sim}  \cU \\
(x,t) & \mapsto   \Phi\(\frac{t}{|G|}, x \) , 
\end{split}
\end{equation*}
over $\gD$, which is moreover $G$-equivariant since $\Phi$ is $G$-equivariant.
\end{proof}

\begin{proof}[Proof of Lemma~\ref{lem-Gloctriv}]
Since $C$ is $G$-invariant and since the subset $\cV^G \colonec \bigcap_{g \in G} g(\cV)$ is a finite intersection so is an open subset, $V^G \colonec \cV^G \cap X$ is a $G$-invariant neighborhood of $C$. Let $U \subset V^G$ be a $G$-invariant neighborhood of $Y$ which is relatively compact in $V^G$. By applying Lemma~\ref{lem-techG} to $\cV$ and to $U$, we deduce that up to shrinking $\gD$, there exists a $G$-invariant subset $\cU \subset \cV^G$ together with a $G$-equivariant isomorphism 
$$U \times \gD \simeq \cU$$ 
over $\gD$. As $C$ is a $G$-invariant subset of $U$, the image $\cC \subset \cU$ of $C \times \gD$ under the above isomorphism is also $G$-invariant. This proves that the $G$-equivariant isomorphism $\cU \simeq U \times \gD$ induces a $G$-equivariant isomorphism of the pairs $(\cU,\cC) \simeq (U \times \gD, C \times \gD)$, which is the main statement of the lemma.

It follows by definition that $\pi : (\cX , \cC) \to \gD$ is a $G$-equivariant $C$-locally trivial deformation of $(X,C)$. Since $X$ is smooth, up to further shrinking $\gD$ we can assume that $\cX \to \gD$ is a smooth deformation, so that the quotient $\cX/G \to \gD$ is a locally trivial deformation~\cite[Proposition 8.2]{GrafDefKod0}. As
$$(\cU / G ,\cC / G) \simeq \((U/ G) \times \gD  ,(C/ G) \times \gD\)$$
 over $\gD$, the deformation $(\cX / G ,\cC / G) \to \gD$ of the pair $(X/G,C/G)$ is $C/G$-trivial.
\end{proof}

The  following lemma  is a special case of Lemma~\ref{lem-Gloctriv}.

\begin{lem}\label{lem-strloctrivG}
Let $f : X \to B$ be a $G$-equivariant fibration where $G$ is a finite group. Let 
$$
\begin{tikzcd}[cramped, row sep = 20, column sep = 40]
\cX  \arrow[r, "q"] \arrow[d, "\pi", swap] & \gD \times B  \ar{dl}{\pr_1} \\
\gD &  \\
\end{tikzcd}
$$
be a $G$-equivariant strongly locally trivial deformation of $f$ over a one-dimensional base $\gD$. Suppose that $C$ is a $G$-invariant subvariety of $X$ and that $f(C)$ is a finite set of points, then the deformation $\pi : \cX \to \gD$ induces a $G$-equivariant $C$-locally trivial deformation $(\cX,\cC) \to \gD $ of the pair $(X,C)$.
\end{lem}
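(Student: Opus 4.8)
The plan is to show that the strongly locally trivial structure together with the finiteness of $f(C)$ supplies exactly the hypothesis of Lemma~\ref{lem-Gloctriv}, after which that lemma does the rest. Concretely, Lemma~\ref{lem-Gloctriv} takes as input a $G$-equivariant deformation $\pi : \cX \to \gD$ of $X$ and an open set $\cV \subset \cX$ with a (not necessarily $G$-equivariant) isomorphism $\cV \simeq (\cV \cap X) \times \gD$ over $\gD$ whose central fibre $\cV \cap X$ contains $C$, and it produces a $G$-equivariant $C$-locally trivial structure on $\pi$. Since $\pi : \cX \to \gD$ is already a $G$-equivariant deformation of $X$ by hypothesis, the whole task is to construct such a $\cV$; note that we only need the main statement of Lemma~\ref{lem-Gloctriv}, whose proof goes through Lemma~\ref{lem-techG} and does not use the smoothness of $X$.

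To build $\cV$, first I would write $f(C) = \{b_1, \dots, b_k\}$, a finite set of points of $B$ --- this finiteness is the hypothesis I expect to be decisive. Let $o \in \gD$ be the point parameterizing $X$. Applying the strong local triviality of $q$ at each $(o, b_i) \in \gD \times B$ gives neighborhoods $b_i \in U_i \subset B$ and $o \in V_i \subset \gD$ together with an isomorphism $q^{-1}(V_i \times U_i) \simeq q^{-1}(\{o\} \times U_i) \times V_i$ over $V_i$. Since $X = q^{-1}(\{o\} \times B)$ and the restriction of $q$ to $X$ is $f$, one has $q^{-1}(\{o\} \times U_i) = f^{-1}(U_i)$, so this reads $q^{-1}(V_i \times U_i) \simeq f^{-1}(U_i) \times V_i$ over $V_i$.

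Next, because $f(C)$ is finite I would shrink the $U_i$ to be pairwise disjoint --- a product structure restricts to any smaller open subset of the base, so this costs nothing --- and set $U \colonec \bigsqcup_i U_i$ and $V \colonec \bigcap_i V_i$. The $k$ independent trivializations then assemble, with no overlaps to reconcile, into a single isomorphism $q^{-1}(V \times U) \simeq f^{-1}(U) \times V$ over $V$. After shrinking $\gD$ so that $\gD = V$, the open set $\cV \colonec q^{-1}(\gD \times U)$ satisfies $\cV \simeq (\cV \cap X) \times \gD$ over $\gD$, and its central fibre $\cV \cap X = f^{-1}(U) \supseteq f^{-1}(f(C)) \supseteq C$. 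This is precisely the input required by Lemma~\ref{lem-Gloctriv}, so applying that lemma yields the desired $G$-equivariant $C$-locally trivial deformation $(\cX, \cC) \to \gD$ of $(X, C)$.

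The step I expect to be the crux is the assembly in the previous paragraph, and it is exactly here that the finiteness of $f(C)$ is indispensable: for a zero-dimensional $f(C)$ the neighborhoods $U_i$ can be taken disjoint, so the local product structures glue with no transition cocycle to reconcile. Were $f(C)$ positive-dimensional, one would have to patch trivializations over a genuine cover of a neighborhood of $f(C)$, and the transition isomorphisms on the overlaps would in general obstruct a global product decomposition; this is why the hypothesis is stated for $f(C)$ a finite set of points. The remaining verifications --- that restricting a strongly locally trivial product to a smaller base remains a product, and that shrinking $\gD$ is harmless --- are routine, and the $G$-equivariance requires no attention at this stage, since Lemma~\ref{lem-Gloctriv} itself manufactures a $G$-equivariant trivialization from the possibly non-equivariant $\cV$ via the $G$-averaged vector field of Lemma~\ref{lem-techG}.
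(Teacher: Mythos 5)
Your proof is correct and follows essentially the same route as the paper's: use the finiteness of $f(C)$ to pick pairwise disjoint trivializing neighborhoods from the strongly locally trivial structure, take their disjoint union as the open set $\cV$, and feed this into Lemma~\ref{lem-Gloctriv}. Your added remark that only the main statement of Lemma~\ref{lem-Gloctriv} (which does not use smoothness of $X$) is needed is a careful touch the paper leaves implicit.
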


\begin{proof}
 Let $\{p_1,\ldots,p_n\} \colonec f(C) \subset B$.  By definition, up to shrinking $\gD$, for each $i$ there exists a neighborhood $p_i \in V_i \subset B$ of $p_i$ such that  the restriction of $\pi : \cX \to \gD$ to $ \cV_i \colonec q^{-1}(\gD \times V_i)$ is isomorphic to $(\cV_i \cap X) \times \gD $ over $\gD$.  Up to shrinking the $V_i$'s, we can assume that they are pairwise disjoint, so that $\cV \colonec \sqcup_{i = 1}^n \cV_i$ is isomorphic to $V \times \gD$ over $\gD$ where $V \colonec \cV \cap X$. Applying Lemma~\ref{lem-Gloctriv} to the $G$-equivariant deformation $\pi:\cX \to \gD$, the $G$-invariant subvariety $C$, and $\cV$ yields Lemma~\ref{lem-strloctrivG}.
\end{proof}

\begin{rem}
For simplicity, Lemma~\ref{lem-techG} is stated and proven under the assumption that $\dim \gD = 1$ and so are Lemma~\ref{lem-Gloctriv} and Lemma~\ref{lem-strloctrivG}, which will be enough for the purpose of this article. All these lemmata could have been stated without assuming that $\dim \gD = 1$.
\end{rem}

\section{Bimeromorphic models of non-algebraic compact Kähler threefolds}\label{sec-bim}

The reader is referred to~\cite{HorPetsurvey} for a survey of the minimal model program (MMP) for Kähler threefolds. Let $X$ be a compact Kähler threefold with non-negative Kodaira dimension $\gk(X)$. By running the MMP on $X$, we obtain a $\bQ$-factorial bimeromorphic model $X_\mmin$ of $X$ with at worst terminal singularities (which are isolated, since $\dim X = 3$) whose canonical line bundle $K_{X_\mmin}$ is nef. Such a variety $X_\mmin$ is called a \emph{minimal model} of $X$. By the abundance conjecture, which is known to be true for Kähler threefolds, there exists $m \in \bZ_{>0}$ such that $mK_{X_\mmin}$ is base-point free and that the surjective map $f : X_\mmin \to B$ defined by the linear system $|mK_{X_\mmin}|$ is a fibration satisfying $\dim B = \gk(B) = \gk(X)$. The fibration $f : X_\mmin \to B$ is called the \emph{canonical fibration} of $X_\mmin$ and a general fiber $F$ of $f$ satisfies $\cO(mK_F) \simeq \cO_F$ by the adjunction formula.

The aim of this section is to describe minimal models of non-algebraic compact Kähler threefolds of Kodaira dimension $\gk= 0$ or $1$. Let us start from varieties with $\gk = 0$.

\begin{pro}\label{pro-gk0MMP} 
Let $X$ be a non-algebraic compact Kähler threefold with $\gk(X) = 0$ and let $X_\mmin$ be a minimal model of $X$. Then $X_\mmin$ is isomorphic to a quotient $\wt{X}/G$ by a finite group $G$ where $\wt{X}$ is either a $3$-torus or a product of a K3 surface and an elliptic curve.
\end{pro}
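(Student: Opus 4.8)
The plan is to combine the abundance theorem, already recalled above, with the Beauville--Bogomolov-type decomposition theorem for singular K\"ahler spaces with numerically trivial canonical class, and to invoke the non-algebraicity of $X$ only at the very end, in order to exclude the Calabi--Yau factor. As a first step I would record that $K_{X_\mmin}$ is torsion: since $\gk(X) = 0$, the base $B$ of the canonical fibration recalled above is a point, so that $m K_{X_\mmin} \simeq \cO_{X_\mmin}$ for some $m \in \bZ_{>0}$; in particular $K_{X_\mmin} \equiv 0$.

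Next I would apply the decomposition theorem for $\bQ$-factorial terminal compact K\"ahler threefolds with torsion canonical class: there is a quasi-\'etale (i.e.\ \'etale in codimension one) finite cover $p : \wt X \to X_\mmin$ along which $\wt X$ splits as a product of a complex torus with irreducible Calabi--Yau and irreducible holomorphic symplectic factors. Because the terminal singularities of $X_\mmin$ are isolated, $p$ is \'etale away from finitely many points, hence $\wt X$ is again terminal with $K_{\wt X} \simeq \cO_{\wt X}$. Passing to the Galois closure I may assume $p$ is Galois; writing $G \colonec \Gal(\wt X / X_\mmin)$ gives $X_\mmin \simeq \wt X / G$, and since a quasi-\'etale cover of a torus (resp.\ of a product of a K3 surface and an elliptic curve, resp.\ of a Calabi--Yau threefold) is again of the same type, this reduction does not enlarge the list of possibilities. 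A count of the dimensions of the admissible factors, together with the fact that terminal singularities in dimension $\le 2$ are smooth points, then shows that $\wt X$ is isomorphic to a $3$-torus, to a product of a K3 surface and an elliptic curve, or to an (\emph{a priori} singular) terminal Calabi--Yau threefold; in the first two cases $\wt X$ is smooth.

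Finally I would eliminate the Calabi--Yau threefold case using non-algebraicity. A Calabi--Yau threefold is projective, so if $\wt X$ were projective then the finite quotient $X_\mmin \simeq \wt X / G$ would be projective as well; but $X$ is bimeromorphic to $X_\mmin$, hence K\"ahler and Moishezon, hence projective, contradicting the non-algebraicity of $X$. Therefore $\wt X$ is a $3$-torus or a product of a K3 surface and an elliptic curve, and $X_\mmin \simeq \wt X / G$, as claimed.

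The main input, and the step I expect to demand the most care, is the decomposition theorem itself in the singular K\"ahler setting: one must check that it applies to $\bQ$-factorial terminal K\"ahler threefolds with torsion canonical class and that the resulting factors are forced to be smooth outside the Calabi--Yau case (this is precisely where the isolatedness of terminal threefold singularities and the smoothness of terminal surface singularities are used). By contrast, the torsion of $K_{X_\mmin}$, the ascent and descent of projectivity along the finite cover $p$ and along the bimeromorphic map $X \dto X_\mmin$, and the stability of the list of factors under passing to the Galois closure are all comparatively routine.
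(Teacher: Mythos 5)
Your overall architecture is sound, but the step you yourself single out as the main input --- a Beauville--Bogomolov decomposition theorem for $\bQ$-factorial terminal compact \emph{K\"ahler} threefolds with torsion canonical class --- is a genuine gap rather than a quotable black box. The singular decomposition theorem is available in the \emph{projective} klt category, but its extension to non-projective K\"ahler spaces is not something you can invoke here: the known proofs of that extension themselves proceed via locally trivial algebraic approximation, i.e.\ via results of exactly the kind this paper is building, so your argument is at best resting on an unproved input and at worst circular. The paper sidesteps this entirely by using non-algebraicity \emph{first} rather than last: after passing to the index-one cover $\wt{X}_\mmin \to X_\mmin$ (finite, cyclic, \'etale over the complement of $\Sing{X_\mmin}$, with $K_{\wt{X}_\mmin} \simeq \cO_{\wt{X}_\mmin}$ and again terminal by~\cite[Corollary 5.21]{KollarMori}), it applies~\cite[Theorem 6.1]{GrafDefKod0} to conclude that $\wt{X}_\mmin$ is \emph{smooth} precisely because $X$ is non-algebraic. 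From that point only the classical smooth decomposition theorem~\cite{Beauvillec1=0} is needed, and the Calabi--Yau factor is excluded exactly as in your last paragraph. In other words, the projectivity-versus-smoothness dichotomy that you postpone to the very end is the engine of the proof and has to be run \emph{before} any decomposition is attempted.

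Two smaller points. First, your sentence ``a Calabi--Yau threefold is projective'' is only routine for smooth Calabi--Yau threefolds (where $H^2(\cdot,\cO)=0$ plus Kodaira's criterion applies); for the a priori singular terminal Calabi--Yau factor your reduction allows, the projectivity statement is essentially the same nontrivial dichotomy as above and cannot be treated as a formality. Second, once $\wt{X}$ is known to be smooth, the passage to a Galois cover still requires care: the paper takes the Galois closure of the composite cover over $X_\mmin \bss \Sing{X_\mmin}$, uses that the removed set is finite so that $\pi_1$ of the punctured total space equals $\pi_1$ of the total space, and thereby extends the Galois closure to a finite \'etale cover of the $3$-torus or of the product of a K3 surface and an elliptic curve; this is what guarantees that the final $\wt{X}$ stays in the stated list while carrying a $G$-action with quotient $X_\mmin$. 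Your version asserts the stability of the list under quasi-\'etale covers, which is fine for the smooth cases via purity, but the bookkeeping above is what actually produces the group $G$.
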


\begin{proof}
Since $\gk(X) = 0$, there exists $m \in \bZ_{>0}$ such that $\cO(mK_{X_{\mmin}}) \simeq \cO_{X_{\mmin}}$. Let $\pi : \wt{X}_{\mmin} \to X_{\mmin}$ be the index $1$ cover of $X_{\mmin}$: this is a finite cyclic cover étale over $X \bss \Sing{X}$ such that $ K_{\wt{X}_{\mmin}} \simeq \cO_{\wt{X}_{\mmin}}$~\cite[p. 159]{KollarMori}. As ${X}_{\mmin}$ has at worst terminal singularities, by~\cite[Corollary 5.21 (2)]{KollarMori}, the variety $\wt{X}_{\mmin}$ has also at worst terminal singularities. Since $X$ is assumed to be non-algebraic, by~\cite[Theorem 6.1]{GrafDefKod0} $\wt{X}_{\mmin}$ is smooth. Thus by the Beauville-Bogomolov decomposition theorem~\cite[Théorème 1]{Beauvillec1=0}, there exists a finite étale cover ${X}' \to \wt{X}_{\mmin}$ such that ${X}'$ is either a $3$-torus or a product of a K3 surface and an elliptic curve (as $X'$ is non-algebraic, $X'$ cannot be a Calabi-Yau threefold); let $\tau : {X}' \to X_{\mmin}$ denote the composition of ${X}'  \to \wt{X}_{\mmin}$ with $\pi$.

The finite map $\tau$ is étale over $X \bss \Sing{X}$. Let $\wt{X}^\circ \to X' \bss Z \to X \bss \Sing{X}$ be the Galois closure of $\tau_{|X' \bss Z}$ where $Z \colonec \tau^{-1}(\Sing{X})$ and let 
$$G \colonec \Gal\(\wt{X}^\circ/ (X \bss \Sing{X})\).$$ 
Since $\Sing{X}$ and hence $Z$ are finite sets of points, we have $\pi_1(X' \bss Z)  \simeq  \pi_1(X')$. It follows that $\wt{X}^\circ \to X' \bss Z$ extends to $\wt{X} \to X'$ which is the finite étale cover associated to the subgroup $\Gal\(\wt{X}^\circ / (X' \bss Z)\) < \pi_1(X' \bss Z)  \simeq  \pi_1(X')$. The variety $\wt{X}$ is still a $3$-torus or a product of a K3 surface and an elliptic curve. As $\wt{X} \bss \wt{X}^\circ$ is a set of isolated points, the $G$-action on $\wt{X}^\circ$ extends to a $G$-action on $\wt{X}$ whose quotient is $X_\mmin$.
\end{proof}

\begin{rem}
The group $G$ constructed in the proof of Proposition~\ref{pro-gk0MMP} acts freely outside of a finite set of points of $\wt{X}$.
\end{rem}

For  quotients $(S \times E) / G$ of the product of a non-algebraic K3 surface $S$ and an elliptic curve $E$, we can show that the $G$-action is necessarily diagonal.

\begin{lem}\label{lem-GactionK3E}
Let $G$ be a group acting on $S \times E$ where $S$ is a non-algebraic K3 surface and $E$ is an elliptic curve. Then this $G$-action is the product of a $G$-action on $S$ and a $G$-action on $E$.
\end{lem}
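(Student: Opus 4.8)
The plan is to reduce the statement to a pointwise assertion about individual automorphisms: I will show that \emph{every} automorphism $\phi$ of $S \times E$ has the product form $\phi(s,e) = (g_S(s), g_E(e))$ for some $g_S \in \Aut(S)$ and $g_E \in \Aut(E)$. Granting this, apply it to the automorphism induced by each $g \in G$. The decomposition is manifestly unique, so composing two automorphisms composes their $S$- and $E$-components separately; hence $g \mapsto g_S$ and $g \mapsto g_E$ are group homomorphisms $G \to \Aut(S)$ and $G \to \Aut(E)$, which is precisely the assertion that the $G$-action is a product.

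The first step is to split off the $E$-direction using the Albanese map. Since $S$ is a K3 surface it is simply connected with $H^0(S,\Omega^1_S) = 0$, so $\Alb(S \times E) \simeq \Alb(E) \simeq E$ and the Albanese map $\alb \colon S \times E \to E$ coincides, up to the canonical identification, with the second projection $\pr_E$. By functoriality of the Albanese, any automorphism $\phi$ sits in a commutative square $\alb \circ \phi = g_E \circ \alb$ with $g_E \in \Aut(E)$ an automorphism of $E$ (a homomorphism of the Albanese torus composed with a translation). Thus $\pr_E \circ \phi = g_E \circ \pr_E$, so $\phi$ carries each fiber $S \times \{e\}$ isomorphically onto $S \times \{g_E(e)\}$, and we may write $\phi(s,e) = (\phi_1(s,e), g_E(e))$ where, for each fixed $e$, the map $\phi_1(\cdot, e) \colon S \to S$ is an automorphism.

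It then remains to show $\phi_1(\cdot, e)$ is independent of $e$. Here I would use that the holomorphic symplectic form induces an isomorphism $T_S \simeq \Omega^1_S$, whence $H^0(S, T_S) \simeq H^0(S, \Omega^1_S) = 0$; consequently $\Aut(S)$ is a discrete group. The holomorphic map $E \to \Aut(S)$, $e \mapsto \phi_1(\cdot, e)$, from the connected curve $E$ into a discrete group is therefore constant, giving $\phi_1(s,e) = g_S(s)$ for a fixed $g_S \in \Aut(S)$ and hence the desired decomposition $\phi = (g_S, g_E)$. Equivalently, one argues directly that the $e$-derivative $\dr_e \phi_1$ transports under $\phi_1(\cdot,e)^{-1}$ to a global holomorphic vector field on $S$, which must vanish since $H^0(S,T_S)=0$.

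The step that needs the most care is the Albanese argument, i.e.\ verifying cleanly that $\phi$ genuinely preserves the projection to $E$ and that the induced map on $E$ is an automorphism; once this is in place the rest of the argument is soft, the only remaining point being the standard fact that a holomorphic family of automorphisms of $S$ over a connected base is constant because $\Aut(S)$ is discrete. I would remark that the hypothesis that $S$ is non-algebraic does not appear to be essential for this argument, which rests only on $H^1(S,\cO_S)=0$, $\pi_1(S)=0$, and $H^0(S,T_S)=0$; it is stated this way because that is the situation in which Proposition~\ref{pro-gk0MMP} applies the lemma.
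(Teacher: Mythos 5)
Your proof is correct, and for the harder half of the lemma it takes a genuinely different route from the paper. The first step is essentially the paper's: the paper observes that each $g$ must send fibers of $p_2$ to fibers because a K3 surface admits no non-constant map to an elliptic curve (it phrases this as $h^{0,1}(F) < h^{0,1}(E)$), which is exactly the fact you extract from Albanese functoriality, so that part is only a repackaging. The divergence is in showing that $\phi$ also respects the first projection. The paper argues by contradiction: if some $g(\{t\} \times E)$ were not contracted by $p_1$, one would get a covering family of genus-one curves $\{E'_t\}$ on $S$; an analysis of linear systems on the non-algebraic K3 surface reduces this to a one-dimensional family, and a self-intersection computation on $\wt{E'_t} \times E$ with $\wt{E'_t} \simeq \bP^1$ gives the contradiction --- this is precisely where the non-algebraicity of $S$ enters. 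You instead write $\phi(s,e) = (\phi_1(s,e), g_E(e))$ and note that $e \mapsto \phi_1(\cdot,e)$ is a holomorphic family of automorphisms of $S$ over the connected curve $E$, hence constant because $\Aut(S)$ is discrete, $H^0(S,T_S) \simeq H^0(S,\Omega^1_S) = 0$ via the symplectic form; the vector-field formulation you give (transport the $e$-derivative by $\phi_1(\cdot,e)^{-1}$ to get a global holomorphic vector field on $S$, which must vanish) makes this completely elementary. Your argument is softer and shorter, and your closing remark is accurate: it nowhere uses that $S$ is non-algebraic, so it proves the statement for arbitrary K3 surfaces, whereas the paper's argument as written genuinely relies on that hypothesis. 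Both proofs establish the lemma as stated, and the reduction from the $G$-action to a single automorphism, with the homomorphism property following from uniqueness of the decomposition, is handled correctly.
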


\begin{proof}
For each $g \in G$ and each fiber $F$ of the second projection $p_2 : S \times E \to E$, since $h^{0,1}(F) < h^{0,1}(E)$, it follows that $g(F)$ is still a fiber of $p_2$. So the $G$-action on $S \times E$ induces a $G$-action on $E$. Suppose that there exist $g \in G$ and a fiber $E_t$ of the first projection $p_1 : S \times E \to S$ such that $g(E_t)$ is not contracted by $p_1$, then if we vary $t \in S$, we have a two-dimensional covering family of curves 
$$\{E'_t \colonec p_1(g(E_t))\}_{t \in S}$$ 
on $S$ generically of geometric genus $1$. Since  algebraic equivalence coincides with  linear equivalence for curves on a K3 surface and since there is only one-dimensional families of curves of geometric genus $1$ in each linear system,  $\{E'_t\}_{t\in S}$ is in fact a one-dimensional family of curves, say parameterized by some proper curve $T$. As $S$ is non-algebraic, the family $\{E'_t\}_{t\in T}$ is an elliptic fibration and there exists $t \in T$ such that the normalization $\wt{E'_t}$ of $E'_t$ is $\bP^1$. 

Let $C \subset S$ be a curve such that for each $p \in C$, we have $E'_p = E'_t$. Since the curves $g(E_p) \subset S \times E$ are mutually disjoint for $p \in C$, their strict transformations $\wt{g(E_p)}$ in the normalization $\wt{E'_t} \times E$ of $E'_t \times E$ are also disjoint from each other. It follows that $[\wt{g(E_p)}]^2 = 0$ in $H^4(\wt{E'_t} \times E,\bZ)$ and since $\wt{E'_t} \simeq \bP^1$, the curve $\wt{g(E_p)}$ has to be a fiber of $\wt{E'_t} \times E \to \wt{E'_t}$. The latter is in contradiction with the assumption that $g(E_p)$ is not contracted by $p_1$.
\end{proof}

Next we turn to varieties with $\gk = 1$.

\begin{thm}\label{thm-gk1MMP}
Let $X$ be a non-algebraic compact Kähler threefold with $\gk(X) = 1$. Let $X_\mmin$ be a minimal model of $X$ and $X_\mmin \to B$ the canonical fibration of $X_\mmin$. Then $X_\mmin \to B$ satisfies one of the following descriptions:
\begin{enumerate}[i)]
\item If a general fiber $F$ of $X_\mmin \to B$ is algebraic, then $F$ is either an abelian surface or a bielliptic surface;
\item If $F$ is not algebraic, then $F$ is either a K3 surface or a 2-torus, and there exists a finite Galois cover $\wt{B} \to B$ of $B$ and a smooth  fibration $\wt{X} \to \wt{B}$ whose fibers are all isomorphic to $F$, such that $\wt{X}$ is bimeromorphic to $X_{\mmin} \times_{B} \wt{B}$ over $\wt{B}$. Moreover, the monodromy action of $\pi_1(\wt{B})$ on $F$ preserves the holomorphic symplectic form. Finally if either $F$ is a K3 surface or $X_\mmin $ contains a curve which dominates $B$, then there exists a finite Galois base change as above such that $\wt{X} \to \wt{B}$ is isomorphic to the standard projection $F \times \wt{B} \to \wt{B}$. 
\end{enumerate}
\end{thm}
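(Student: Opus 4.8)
The plan is to read off the possible fibers from the classification of surfaces, then use the non-algebraicity of $X$ both to eliminate two fiber types and to produce the dichotomy, and finally to extract the rigidity of the family in the non-algebraic case from Hodge theory. Since $\gk(B)=\dim B=1$, the base of the canonical fibration $f\colon X_\mmin\to B$ is a smooth projective curve of genus $\ge 2$, and a general fiber $F$ is a smooth surface with $\cO(mK_F)\simeq\cO_F$, hence a minimal surface of Kodaira dimension $0$. By the Enriques--Kodaira classification $F$ is a K3 surface, an Enriques surface, an abelian surface, or a bielliptic surface. Replacing $X_\mmin$ by a smooth K\"ahler model $X$ with an induced fibration over $B$ (which leaves the general fiber unchanged), the decisive invariant will be $h^{1,0}(F)=h^1(F,\cO_F)$, which vanishes precisely for K3 and Enriques surfaces.

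First I would show that if $h^1(F,\cO_F)=0$ and $F$ is projective, then $X$ is projective, contradicting the hypothesis. A general fiber then carries a polarization $h$, which is a generic N\'eron--Severi class and therefore stays of type $(1,1)$ on every fiber; as the monodromy of $R^2f_*\bZ$ preserves the integral lattice and the ample cone, it acts through a finite group on the generic N\'eron--Severi group, so after a finite base change $\wt B\to B$ the class $h$ extends to a flat, everywhere-$(1,1)$ section. By the global invariant cycle theorem $h$ lifts to a rational class $H\in H^2(\wt X,\bQ)$; since $h^{1,0}(F)=0$ kills the mixed summand of $H^{2,0}$, the $(2,0)$-part of $H$ restricts to $0$ on every fiber and is hence a section of the Hodge bundle vanishing identically, so $H$ is of type $(1,1)$. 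Then $H+Cf^*A$ ($A$ ample on $\wt B$, $C\gg0$), being relatively K\"ahler plus a large pullback, is a rational K\"ahler class, whence $\wt X$ and thus $X_\mmin$ are projective by Kodaira's criterion. Applying this to Enriques surfaces (always projective) and to projective K3 surfaces excludes both: a projective fiber must be abelian or bielliptic, which is (i), and since Enriques and bielliptic surfaces are always projective, a non-algebraic fiber must be a K3 surface or a $2$-torus, the first assertion of (ii).

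In case (ii) the general fiber is a non-projective K3 surface or $2$-torus, so $h^{2,0}(F)=1$ and $F$ carries a holomorphic symplectic form, and the fiber has algebraic dimension $0$, i.e. trivial generic N\'eron--Severi group. The heart of the proof, and the step I expect to be the main obstacle, is the isotriviality. The idea is that the K\"ahler class of $X$ restricts to a monodromy-invariant real $(1,1)$-class $\kappa_0$ with $\kappa_0^2>0$; by the theorem of the fixed part the invariant part $W:=H^2(F,\bQ)^{\pi_1}$ is a rational sub-Hodge structure containing $\kappa_0$, and since the generic N\'eron--Severi group is trivial $W$ cannot be purely of type $(1,1)$, so $W$ contains $H^{2,0}(F)$ and hence, the transcendental Hodge structure being everything, all of $H^2(F,\bQ)$. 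Thus the monodromy is finite, and after a finite Galois base change $\wt B\to B$ it is trivial. The variation $R^2f_*\bQ$, polarized by the invariant class $\kappa_0$, then has trivial monodromy over the compact curve $\wt B$ and is therefore constant; by the Torelli theorem for K3 surfaces, respectively for complex tori, the family is isotrivial. Removing the now-removable singular fibers produces the smooth isotrivial fibration $\wt X\to\wt B$ with all fibers $\simeq F$ and bimeromorphic to $X_\mmin\times_B\wt B$, and since the monodromy preserves the period it preserves the line $H^{2,0}(F)$, i.e. the symplectic form.

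Finally $\wt X\to\wt B$ is an $F$-bundle whose monodromy $\pi_1(\wt B)\to\Aut(F)$ lands in the symplectic automorphisms. If $F$ is a K3 surface with trivial N\'eron--Severi group it has no non-trivial finite symplectic automorphism, so the monodromy is trivial and $\wt X\simeq F\times\wt B$; if instead $X_\mmin$ carries a curve dominating $B$, its image is a multisection which becomes a section after a further base change, and for a $2$-torus bundle this trivializes the translation part of the monodromy, after which the preservation of the symplectic form together with the resulting fixed point forces the remaining linear monodromy to vanish, again giving a product. The delicate point throughout the previous step is to upgrade ``algebraic dimension $0$ plus a positive invariant K\"ahler class'' into finiteness of the monodromy and then constancy of the period over the compact base; the subcase of an elliptic non-projective K3 fiber (with Picard number $\ge1$) has to be separated, and this is exactly where the extra hypothesis in the last clause of (ii) is needed.
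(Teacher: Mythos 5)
Your overall strategy (classify the fiber, use non-algebraicity to force irregularity in the projective case, use the invariant Kähler class and the theorem of the fixed part to control the monodromy in the non-algebraic case, then trivialize after a Galois base change) is the same as the paper's, except that where the paper cites Fujiki for case (i), Campana for isotriviality, and \cite[Lemma 4.2]{HYLbimkod1} for producing the smooth model $\wt X \to \wt B$ after base change, you try to reprove these from scratch. Two of your replacements contain genuine errors. First, the assertion that a non-projective K3 or $2$-torus fiber has ``algebraic dimension $0$, i.e.\ trivial generic N\'eron--Severi group'' is false: a non-projective K3 may have $a(F)=1$ (elliptic) or may carry $(-2)$-curves, so $\NS(F)$ can be nonzero. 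Your conclusion $W\supset H^{2,0}(F)$ survives (because $\NS$ of a non-projective surface is negative semi-definite, so the invariant real class $\kappa_0$ with $\kappa_0^2>0$ cannot lie in $W^{1,1}\cap\NS_\bR$), but you then only get $W\supset T(F)$, not $W=H^2(F,\bQ)$, and the finiteness of the monodromy on $H^2$ and in $\Aut(F)$ no longer follows from your argument when $\NS(F)\neq 0$. The paper closes exactly this gap with Lieberman's theorem: the monodromy fixes a Kähler class, hence has finite image in $\Aut(F)/\Aut_0(F)$, and $\Aut_0=1$ for a K3. Relatedly, your closing remark that the elliptic non-projective K3 subcase ``is exactly where the extra hypothesis in the last clause of (ii) is needed'' misreads the statement: the last clause holds for \emph{every} K3 fiber with no hypothesis on curves; the curve-dominating-$B$ hypothesis is only needed for $2$-torus fibers.

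Second, in the $2$-torus case your final step --- ``the preservation of the symplectic form together with the resulting fixed point forces the remaining linear monodromy to vanish'' --- is wrong: $-\mathrm{id}$ on a $2$-torus fixes the origin and preserves the holomorphic symplectic form, so a symplectic linear automorphism with a fixed point need not be trivial. What is true, and what the paper uses, is that the image of $\pi_1(\wt B)$ in $\Aut(F)/\Aut_0(F)$ is finite (again Lieberman), so a \emph{further} finite Galois base change makes the action on $H^1(F,\bZ)$ trivial, and only then does the Jacobian-fibration structure give $\wt X\simeq F\times\wt B$. Two smaller soft spots: ``trivial monodromy over the compact curve, therefore constant'' is not a valid implication as stated, since the variation lives over the punctured curve $\wt B\setminus \wt D$; the correct justification is the theorem of the fixed part (which you invoke elsewhere), namely that $H^{2,0}(F)$ being monodromy-invariant is spanned by the restriction of a global class, so the period is constant and Torelli gives isotriviality. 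And passing from isotriviality to a finite base change after which the fibration becomes smooth with all fibers isomorphic to $F$ and bimeromorphic to $X_\mmin\times_B\wt B$ (handling multiple and singular fibers) is a nontrivial step that you dispose of in one clause; this is precisely the content of the cited \cite[Lemma 4.2]{HYLbimkod1}.
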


\begin{proof}

Since $X_\mmin$ has only isolated singularities, a general fiber $F$ of $X_\mmin \to B$ is a connected smooth surface. As  $K_F$ is torsion, the classification of surfaces shows that $F$ is either a K3 surface, an Enriques surface, a 2-torus, or a bielliptic surface. Since $X$, and thus $X_\mmin$ is non-algebraic, if $F$ is algebraic then by Fujiki's result~\cite[Proposition 7]{FujikiAlbC} $F$ is irregular, so $F$ can only be an abelian surface or a bielliptic surface, which proves $i)$. 

Assume that $F$ is not algebraic, then $F$ is either a K3 surface or a 2-torus and by~\cite{Campanaisotriv}, the fibration $X_\mmin \to B$ is isotrivial. By~\cite[Lemma 4.2]{HYLbimkod1}, there exists some finite map $\wt{B} \to B$ of $B$ and a smooth  fibration $\wt{X} \to \wt{B}$ all of whose fibers are isomorphic to $F$, such that $\wt{X}$ is bimeromorphic to $X_{\mmin} \times_{B} \wt{B}$ over $\wt{B}$. Up to taking the Galois closure of $\wt{B} \to B$, we can assume that $\wt{B} \to B$ is Galois.

Since $\tilde{f}$ is smooth and isotrivial, the fundamental group $\pi_1(\wt{B})$ acts on $F$ by monodromy transformations. Since $\wt{X}$ is assumed to be non-algebraic, we have $H^0(X,\gO_X^2) \ne 0$. Hence by the global cycle invariant theorem,  the $\pi_1(\wt{B})$-action on $F$ is symplectic.

As $\wt{X}$ is Kähler, again by the global cycle invariant theorem there exists a Kähler class on $F$ fixed by the induced monodromy action on $H^2(F,\bR)$.  It follows that the map $\pi_1(\wt{B}) \to \Aut(F)/\Aut_{0}(F)$ has finite image where $\Aut_0(F)$ denotes the identity component of $\Aut(F)$~\cite[Proposition 2.2]{LiebermanCompChowsch}.

In the case where $F$ is a K3 surface, $\Aut_0(F)$ is trivial, so  $\pi_1(\wt{B})$ acts as a finite group on $F$. Accordingly after some finite base change of $\tilde{f} : \wt{X} \to \wt{B}$, the fibration $\tilde{f}$ becomes a trivial.
Now assume that $F$  is a  2-torus and that $X_\mmin$ contains a curve dominating $B$. After another finite base change of  $\tilde{f} : \wt{X} \to \wt{B}$ we can assume that $\tilde{f}$ has a section $\gs : \wt{B} \to \wt{X}$, namely $\tilde{f}$ is a Jacobian fibration. Recall that $\pi_1(\wt{B}) \to \Aut(F)/\Aut_{0}(F)$ has finite image, so after a further finite base change  of  $\tilde{f} : \wt{X} \to \wt{B}$, we can assume that the monodromy action of $\pi_1( \wt{B})$ on $H^1(F,\bZ)$ is trivial. As $\tilde{f} : \wt{X} \to \wt{B}$ is a Jacobian fibration, we conclude that $\wt{X} \simeq F \times \wt{B}$ and that $\tilde{f}$ is isomorphic to the projection $F \times \wt{B} \to \wt{B}$.
 
 As before, both in the case where $F$ is a K3 surface or a 2-torus, up to taking the Galois closure of $\wt{B} \to B$ we can assume that $\wt{B} \to B$ is Galois.
\end{proof}

\section{Equivariant algebraic approximations of pairs}\label{sec-casparcas}

In this section, we will prove for some compact Kähler threefolds $X$ endowed with a $G$-action that for every $G$-invariant curve $C \subset X$, there exists a $G$-equivariant $C$-locally trivial algebraic approximation of the pair $(X,C)$. Results in Section~\ref{sec-bim} show that the quotients $X/G$ of these varieties cover all compact Kähler threefolds of Kodaira dimension $0$ or $1$ up to bimeromorphic transformations, hence will allow us to conclude the proof of Theorem~\ref{thm-mainpair} in Section~\ref{sec-concl}.

Before dealing with threefolds, we start by proving analogue statements concerning the existence of a $G$-equivariant $C$-locally trivial algebraic approximation for fibrations admitting a strongly locally trivial algebraic approximation and for surfaces in the next two subsections.

\ssec{Fibrations admitting a strongly locally trivial algebraic approximation}

\begin{lem}\label{lem-sltaa}
Let $X$ be a non-algebraic compact Kähler variety and $f : X \to B$ a surjective map onto a curve with algebraic fibers. Suppose that $X$ has a strongly locally trivial algebraic approximation $\pi : \cX \to \gD$ with respect to $f$, then for any subvariety $C \subset X$, up to shrinking $\gD$ the deformation $\pi$ induces a $C$-locally trivial algebraic approximation of $(X,C)$.

If moreover there exists a finite group $G$ acting $f$-equivariantly on $X$ and on $B$ and the algebraic approximation of $X$ in the assumption above is $G$-equivariant, then the induced $C$-locally trivial algebraic approximation is also $G$-equivariant for every $G$-invariant subvariety $C$.
\end{lem}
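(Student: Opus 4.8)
The plan is to reduce the whole statement to Lemma~\ref{lem-Gloctriv} by producing an open neighborhood of $C$ on which $\pi$ is trivialized. First note that algebraicity of the members is free: if $\cX_{t_i}$ is projective along a sequence $t_i \to o$, then any closed analytic subvariety $\cC_{t_i} \subset \cX_{t_i}$ is algebraic by GAGA, so it suffices to exhibit a deformation $\cC \subset \cX$ of $C$ making $\pi : (\cX,\cC) \to \gD$ a $C$-locally trivial deformation. By the very definition of $C$-local triviality, for this it is enough to find an open $\cV \subset \cX$ with $\cV \simeq V \times \gD$ over $\gD$, where $V \colonec \cV \cap X \supseteq C$. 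Such a $\cV$ immediately yields the non-equivariant conclusion, and in the presence of the $G$-action it is exactly the hypothesis of Lemma~\ref{lem-Gloctriv}, which then produces the $G$-equivariant $C$-locally trivial structure near $\cC$ together with the locally trivial, $C/G$-locally trivial deformation of the quotient. Thus both conclusions follow once $\cV$ is constructed.

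To build $\cV$, I would use the factorization $\cX \xto{q} \gD \times B \xto{\pr_1} \gD$ given by the strong local triviality. Since $X$ is compact, $B$ is a compact curve; cover it by finitely many discs $U_1,\ldots,U_n$ on which strong local triviality supplies trivializations $q^{-1}(V_k \times U_k) \simeq q^{-1}(\{o\} \times U_k) \times V_k = f^{-1}(U_k) \times V_k$ over $V_k$, and shrink $\gD$ so that $\gD \subset \bigcap_k V_k$. Over each $U_k$ the deformation is therefore the trivial deformation of $f^{-1}(U_k)$. For the components of $C$ that are contracted by $f$, so that $f(C)$ is finite, no gluing is required: taking the $U_k$ around the finitely many points of $f(C)$ pairwise disjoint makes $\bigsqcup_k q^{-1}(\gD \times U_k)$ a trivialized neighborhood, which is exactly the mechanism used in Lemma~\ref{lem-strloctrivG}.

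The substantial case is that of a component of $C$ dominating $B$, i.e. a multisection of $f$. Here the local trivializations differ on an overlap $U_{kl} \colonec U_k \cap U_l$ by automorphisms $\phi_{kl}^{\,t}$ of $f^{-1}(U_{kl})$ that preserve the fibration over $U_{kl}$ and are the identity for $t = o$, and a trivialized neighborhood of $C$ amounts to splitting this cocycle in a neighborhood of $C$. The approach I would take is infinitesimal–to–integral: the product structures provide local holomorphic lifts $\theta_k$ of $\dr/\dr t$, namely the horizontal fields of the products $f^{-1}(U_k) \times V_k$; the goal is to assemble from them a single holomorphic lift defined near $C$ and tangent to $C$, and then to integrate it by a local $\bC$-action as in the proof of Lemma~\ref{lem-techG} (via~\cite{Kaup}), so as to sweep $C$ out to $\cC$ while simultaneously trivializing a neighborhood of $\cC$.

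The main obstacle is precisely this holomorphic gluing along $C$. One cannot trivialize $\pi$ globally, since the nearby fibers $\cX_{t_i}$ are projective while $X$ is not, so the lift must be confined to a neighborhood of $C$, where the class of the vertical cocycle $\{\theta_k - \theta_l\}$ in the cohomology of a tube around $C$ has to be shown to vanish. This is where the rigidity of the strongly locally trivial structure, together with the fact that $C$ is one-dimensional, must be exploited to kill the obstruction. Once the lift and its flow are in hand, the resulting $\cV \simeq V \times \gD$ contains $C$, and feeding it into Lemma~\ref{lem-Gloctriv} yields both the equivariant trivialization near $\cC$ and the passage to the quotient, completing the proof.
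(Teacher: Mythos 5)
Your reduction to finding a trivialized neighborhood $\cV \supseteq C$ and feeding it into Lemma~\ref{lem-Gloctriv} is fine, and your treatment of the components of $C$ contracted by $f$ is exactly the mechanism of Lemma~\ref{lem-strloctrivG}. But the case you single out as ``substantial'' --- a component of $C$ dominating $B$, i.e.\ a multisection --- is where the proposal breaks down, in two ways. First, you do not actually prove anything there: you reduce it to the vanishing of a cocycle $\{\theta_k-\theta_l\}$ in a tube around $C$ and assert that ``the rigidity of the strongly locally trivial structure'' must kill the obstruction, without an argument. Second, that vanishing is false in general: a strongly locally trivial deformation need not be trivial near a multisection. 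The family of torus fibrations constructed in Lemma~\ref{lem-multsec} is strongly locally trivial and moves a fibration with no multisection to arbitrarily close ones that do have multisections; running such a family backwards from a member with a multisection, no neighborhood of that multisection can be trivialized, since the nearby fibrations carry no multisection at all. So the obstruction you would need to kill genuinely does not vanish without further hypotheses.

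The point you are missing is that this case is vacuous under the stated hypotheses, and that observation is essentially the entire proof in the paper: since $X$ is non-algebraic while the base $B$ is an algebraic curve and the fibers of $f$ are algebraic, Campana's criterion (\cite[Corollaire in p.212]{CampanaCored}) forces every subvariety of $X$ --- in particular every component of $C$ --- to be contained in finitely many fibers of $f$. Hence $f(C)$ is a finite set of points and Lemma~\ref{lem-strloctrivG} applies directly, giving both the non-equivariant and the $G$-equivariant statements. (Your GAGA remark is also unnecessary: an algebraic approximation of the pair only requires the total spaces $\cX_{t_i}$ to be algebraic, which is already given.) Without invoking the non-algebraicity of $X$ in this way, the lemma as you are trying to prove it would simply be false.
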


\begin{proof}
Since $X$ is non-algebraic and since the base and the fibers of $f$ are algebraic, by Campana's criterion~\cite[Corollaire in p.212]{CampanaCored} every subvariety of $X$ (in particular $C$) is contained in a finite number of fibers of $f$. We can thus apply Lemma~\ref{lem-strloctrivG} to conclude.
\end{proof}

\begin{cor}\label{cor-abaa}
Let $X$ be a non-algebraic compact Kähler variety and $f : X \to B$ a surjective map onto a curve. Let $G$ be a finite group acting $f$-equivariantly on $X$ and on $B$. Assume that a general fiber of $f$ is  an abelian variety, then for every $G$-invariant subvariety $C \subset X$, the pair $(X,C)$ has a $G$-equivariant $C$-locally trivial algebraic approximation.
\end{cor}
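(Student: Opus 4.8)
The plan is to reduce the statement to Lemma~\ref{lem-sltaa}. Indeed, the general fiber of $f$ is an abelian variety, hence algebraic, so the hypotheses of Lemma~\ref{lem-sltaa} are in force; once we know that $X$ admits a $G$-equivariant strongly locally trivial algebraic approximation with respect to $f$, the equivariant half of that lemma produces, for every $G$-invariant subvariety $C$, the desired $G$-equivariant $C$-locally trivial algebraic approximation of $(X,C)$. So the whole problem reduces to constructing a $G$-equivariant strongly locally trivial algebraic approximation of the abelian fibration $f : X \to B$.

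To construct it, I would deform the complex structure of $X$ while keeping the differentiable fibration $X \to B$ and the local holomorphic structure over $B$ fixed, i.e.\ inside the space of strongly locally trivial deformations with respect to $f$; these are classified by $H^1(B, f_* T_{X/B})$, the space of \emph{translation-torsor} deformations that reglue the local families without altering them. Such deformations leave the local system $R^2 f_* \bZ$ and its monodromy-invariant part untouched, but move the Hodge filtration on $H^2(X)$, hence the extension datum lying in the piece $H^1(B, R^1 f_* \cO)$. The associated period map records the resulting Hodge structure on the fixed monodromy-invariant space $W \colonec H^0(B, R^2 f_* \bR)$.

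The source of non-algebraicity of $X$ is exactly that $W$ carries no monodromy-invariant \emph{rational} class that is at once of type $(1,1)$ and fiberwise positive: since $X$ is K\"ahler, the global invariant cycle theorem furnishes a monodromy-invariant \emph{real} class $\alpha \in W$ of type $(1,1)$ restricting to a K\"ahler (hence polarizing) class on the fibers, but $\alpha$ need not be rational. I would then invoke M.\ Green's density criterion (Theorem~\ref{thm-Grenndensecrit}): granting that the derivative of the period map, restricted to the strongly locally trivial directions, surjects onto the $(2,0)\oplus(0,2)$-part of $W_\bC$, the parameters for which some rational class of $W$ becomes of type $(1,1)$ and fiberwise positive form a dense subset near the origin. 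For such a parameter the fibers acquire a common rational polarization and, together with the pullback of an ample class on $B$, the total space becomes projective; this yields a sequence of algebraic members converging to $X$, i.e.\ a strongly locally trivial algebraic approximation.

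To make the construction $G$-equivariant I would average $\alpha$ over the finite group $G$ (the average is again invariant, K\"ahler, and of type $(1,1)$), restrict the deformation family to the $G$-invariant part $H^1(B, f_* T_{X/B})^G$ of the parameter space, and apply Green's criterion there, all averagings being legitimate because $G$ is finite. Feeding the resulting $G$-equivariant strongly locally trivial algebraic approximation into the equivariant half of Lemma~\ref{lem-sltaa} --- which ultimately rests on Lemma~\ref{lem-Gloctriv} --- concludes the proof. The main obstacle I anticipate is the verification of the surjectivity (transversality) hypothesis of Green's criterion for the period map restricted to the strongly locally trivial directions, and carrying it out compatibly with the $G$-action, together with the bookkeeping needed to extend the deformation across the singular fibers over the discriminant while preserving strong local triviality.
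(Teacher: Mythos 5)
Your opening reduction is exactly the paper's: the corollary does come down to producing a $G$-equivariant strongly locally trivial algebraic approximation of $f$ and then applying Lemma~\ref{lem-sltaa}. At that point, however, the paper simply invokes \cite[Theorem 1.6]{HYLbimkod1}, which \emph{is} that existence statement for fibrations in abelian varieties over a curve, whereas you set out to reprove it. Everything of substance in the corollary beyond Lemma~\ref{lem-sltaa} is therefore concentrated in precisely the steps you flag as an anticipated ``obstacle'': the surjectivity of the differential of the period map restricted to the strongly locally trivial directions, the extension of the deformation across the discriminant while preserving strong local triviality, and the $G$-equivariant bookkeeping. These are not routine verifications to be deferred; leaving them unestablished leaves a genuine gap, since they constitute the entire content of the cited theorem.

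Beyond the incompleteness, the mechanism you sketch is not the one that actually works here. Applying Green's criterion (Theorem~\ref{thm-Grenndensecrit}) to $H^2(X,\cO_X)$ of the total space is problematic: for a threefold fibered in abelian surfaces over a curve, $H^2(X,\cO_X)$ has graded pieces $H^1(B,R^1f_*\cO_X)$ \emph{and} $H^0(B,R^2f_*\cO_X)$, and the strongly locally trivial (torsor) directions have no reason to surject onto the second piece, so the transversality hypothesis you need is in general simply false for this restricted family. The argument used in \cite{HYLbimkod1} (and reproduced in the paper's Lemma~\ref{lem-multsec} for the multisection statement) is different: one deforms $X$ as a torsor under its Jacobian fibration, with parameter space $H^1(B,\cE)$ where $\cE$ is the Lie algebra sheaf of the Jacobian, and density of algebraic members comes from Deligne's theorem on the Hodge structure of $H^1(B,\bH_\bZ)$ together with the density of the image of $W_{\bQ}^G$ --- the point being to make the torsor class torsion, hence to produce a multisection, after which relative polarizations of the abelian fibers plus an ample class from $B$ and the K\"ahler hypothesis give projectivity. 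If you replace your construction by the citation \cite[Theorem 1.6]{HYLbimkod1}, your proof coincides with the paper's two-line argument; as written, the heart of the matter is missing and the proposed substitute for it would not go through.
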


\begin{proof}
By~\cite[Theorem 1.6]{HYLbimkod1}, the fibration $f$ has a $G$-equivariant strongly locally trivial algebraic approximation. Hence Corollary~\ref{cor-abaa} follows from Lemma~\ref{lem-sltaa}.
\end{proof}

\ssec{Surfaces with a finite group action}\label{ssec-Gsurf}

\hfill

First we recall some Hodge-theoretical criteria for the existence of an algebraic approximation.

\begin{thm}[{Green's criterion~\cite[Proposition 1]{Buch2}}]\label{thm-Grenndensecrit}
Let $\pi : \cX \to B$ be a family of compact Kähler manifolds over a smooth base. If a fiber $X = \pi^{-1}(b)$ satisfies the property that the composition of the Kodaira-Spencer map and the contraction with some Kähler class $[\go] \in H^1(X,\gO_X^1)$
$$
\begin{tikzcd}[cramped, row sep = 0, column sep = 40]
\mu_{[\go]} :  T_{B,b} \arrow[r, "\mathrm{KS}"] & H^1(X, T_{X})  \arrow[r, "\cupp {[\go]}"] &  H^2(X, T_{X} \otimes \gO_X)  \arrow[r] &  H^2(X,\cO_X) \\ 
\end{tikzcd}
$$
is surjective, then there exists a sequence of points in $B$ parameterizing algebraic members which converges to $b$.
\end{thm}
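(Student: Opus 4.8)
The statement to prove is Green's density criterion (Theorem~\ref{thm-Grenndensecrit}). The plan is to use the classical fact (Kodaira) that a compact Kähler manifold is projective exactly when it carries a rational Kähler class, and to show that rational classes arbitrarily close to $[\go]$ become of type $(1,1)$ on fibers $\cX_t$ with $t$ arbitrarily close to $b$. Working over a simply connected neighborhood of $b$, which I rename $\gD$, I would trivialize the local system $R^2\pi_*\bR \cong H^2(X,\bR) \times \gD$, so that for each $t$ the Hodge decomposition of $H^2(\cX_t,\bC)$ is read off a fixed vector space, and a fixed real class $\ga$ is of type $(1,1)$ on $\cX_t$ if and only if its projection $\ga^{0,2}_t$ onto $H^{0,2}(\cX_t) = H^2(\cX_t,\cO_{\cX_t})$ vanishes. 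This defines a map $\Phi : H^2(X,\bR) \times \gD \to H^2(X,\cO_X)$, $\Phi(\ga,t) = \ga^{0,2}_t$, which is $\bR$-linear in $\ga$ and, because the Hodge subbundle $F^1 \subset R^2\pi_*\bC \otimes \cO_\gD$ is holomorphic and the flat sections of the Gauss--Manin bundle are holomorphic, depends holomorphically on $t$; moreover $\Phi([\go],b) = 0$ since $[\go]$ is of type $(1,1)$ on $X$.

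The next step is to compute $d\Phi$ at $([\go],b)$. The $\ga$-derivative is just $\dot\ga \mapsto \dot\ga^{0,2}$, the projection onto $H^{0,2}(X)$. The $t$-derivative $v \mapsto \dr_t\Phi([\go],\cdot)|_b(v)$ is, by Griffiths transversality and the standard description of the differential of the period map, exactly the given composite $\mu_{[\go]}(v)$: transporting $[\go]$ flatly and differentiating its $(0,2)$-component amounts to cupping the Kodaira--Spencer class $\KS(v) \in H^1(X,T_X)$ with $[\go] \in H^1(X,\gO_X^1)$ and contracting $T_X \otimes \gO_X^1 \to \cO_X$ into $H^2(X,\cO_X)$. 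Thus $d\Phi_{([\go],b)}(\dot\ga, v) = \dot\ga^{0,2} + \mu_{[\go]}(v)$.

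Then I would exploit the hypothesis. Since $\mu_{[\go]}$ is surjective onto $H^2(X,\cO_X)$, the partial differential in the $t$-direction alone is already surjective, so $\Phi^{-1}(0)$ is, near $([\go],b)$, a smooth submanifold; its tangent space there is $\{(\dot\ga,v) : \dot\ga^{0,2} = -\mu_{[\go]}(v)\}$. Given any $\dot\ga \in H^2(X,\bR)$ I can solve $\mu_{[\go]}(v) = -\dot\ga^{0,2}$ for $v$, so the projection $p : \Phi^{-1}(0) \to H^2(X,\bR)$, $(\ga,t)\mapsto\ga$, is a submersion at $([\go],b)$. By the submersion theorem $p\(\Phi^{-1}(0)\)$ contains an open neighborhood of $[\go]$, and the fibers of $p$ can be taken near $([\go],b)$, i.e.\ with $t$ near $b$. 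Since $H^2(X,\bQ)$ is dense in $H^2(X,\bR)$, that neighborhood contains a sequence $\ga_i \to [\go]$ of rational classes, and each $\ga_i$ is of type $(1,1)$ on $\cX_{t_i}$ for some $t_i \to b$.

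Finally, I would upgrade ``of type $(1,1)$'' to ``Kähler''. As $[\go]$ is a Kähler class on $X$ and being a Kähler class is an open condition under simultaneous small variation of the complex structure and of the cohomology class, for $i \gg 0$ the rational class $\ga_i$ lies in the Kähler cone of $\cX_{t_i}$; a suitable integral multiple is then the class of an ample line bundle, so $\cX_{t_i}$ is projective by the Kodaira embedding theorem, while $t_i \to b$. The main obstacle is the derivative computation of the second step, namely identifying the infinitesimal change of the $(0,2)$-part of the flatly transported Kähler class with the composite $\mu_{[\go]}$; the remaining ingredients (the submersion theorem, density of $\bQ$-points, and openness of the Kähler cone combined with Kodaira) are standard.
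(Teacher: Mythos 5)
The paper does not prove this statement at all: it is quoted verbatim from the literature (Green's criterion as in~\cite[Proposition 1]{Buch2}), so there is no internal proof to compare against. Your argument is precisely the standard proof of that cited result --- trivialize $R^2\pi_*\bR$ near $b$, view $\Phi(\ga,t)=\ga^{0,2}_t$ as a holomorphic section of the quotient bundle $(R^2\pi_*\bC\otimes\cO)/F^1$ (which is the only small imprecision: $\Phi$ is valued in a varying space $H^{0,2}(\cX_t)$, not literally in the fixed $H^2(X,\cO_X)$, so one should trivialize this quotient bundle before differentiating), identify the $t$-derivative with $\mu_{[\go]}$ via Griffiths transversality, apply the submersion theorem to get an open set of real classes that become $(1,1)$ on nearby fibers, and finish with density of $H^2(X,\bQ)$, openness of the K\"ahler cone in the family, and Kodaira's embedding theorem --- and it is correct.
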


The following is a variant of Theorem~\ref{thm-Grenndensecrit} when the variety $X$ is endowed with a finite group action.

\begin{thm}[{\cite[Theorem 9.1]{GrafDefKod0}}]\label{thm-Gdensecrit}
Let $X$ be a compact Kähler manifold with an action of a finite group $G$. Suppose that the universal deformation space of $X$ is smooth. If there exists a $G$-invariant Kähler class $[\go] \in H^1(X,\gO_X^1)$ such that the following composition of maps
$$
\begin{tikzcd}[cramped, row sep = 0, column sep = 40]
\mu_{[\go]} : H^1(X, T_{X})  \arrow[r, "\cupp {[\go]}"] &  H^2(X, T_{X} \otimes \gO_X)  \arrow[r] &  H^2(X,\cO_X) \\ 
\end{tikzcd}
$$
is surjective, then $X$ has a $G$-equivariant algebraic approximation.
\end{thm}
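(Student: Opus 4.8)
The plan is to prove this $G$-equivariant density criterion by combining the non-equivariant Green criterion (Theorem~\ref{thm-Grenndensecrit}) with the smoothness of the universal deformation space and the standard descent of deformations under a finite group action. First I would invoke the hypothesis that the universal deformation space $\Defo(X)$ is smooth. Because $G$ is finite and acts on $X$, it acts on $\Defo(X)$, and the fixed locus $\Defo(X)^G$ is again smooth; moreover $\Defo(X)^G$ is precisely the base of a universal $G$-equivariant deformation $\pi : \cX \to B$ of $X$, where $B \colonec \Defo(X)^G$ and the $G$-action extends fiberwise to $\cX$. The tangent space $T_{B,o}$ at the point $o$ parameterizing $X$ is the $G$-invariant part $H^1(X,T_X)^G$ of $H^1(X,T_X)$, and the Kodaira--Spencer map of $\pi$ is the canonical inclusion $T_{B,o} \simeq H^1(X,T_X)^G \hto H^1(X,T_X)$.

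The key point is then to apply Theorem~\ref{thm-Grenndensecrit} to this family $\pi : \cX \to B$ at the fiber $X$. To do so I must check that the composite $\mu_{[\go]} : T_{B,o} \to H^2(X,\cO_X)$ obtained by following the Kodaira--Spencer map with contraction by the given $G$-invariant K�hler class $[\go]$ is surjective. Since the Kodaira--Spencer map of $\pi$ identifies $T_{B,o}$ with $H^1(X,T_X)^G$, this composite is nothing but the restriction to $H^1(X,T_X)^G$ of the map $\mu_{[\go]} : H^1(X,T_X) \to H^2(X,\cO_X)$ appearing in the statement. The hypothesis gives surjectivity of the latter on all of $H^1(X,T_X)$; the content I need is that its restriction to the $G$-invariant part is still surjective. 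This follows because $[\go]$ is $G$-invariant, so the contraction $\cupp [\go]$ and the projection to $H^2(X,\cO_X)$ are $G$-equivariant maps; hence $\mu_{[\go]}$ respects the $G$-action, and since $G$ is finite we may average (the Reynolds operator / semisimplicity of representations of a finite group over $\bC$) to split off the $G$-invariant summands. A surjective $G$-equivariant map of $G$-representations restricts to a surjection on invariants, so $\mu_{[\go]}|_{H^1(X,T_X)^G}$ is surjective onto $H^2(X,\cO_X)^G = H^2(X,\cO_X)$, where the last equality holds because $H^2(X,\cO_X)$ carries the trivial part onto which $[\go]$-contraction lands — more precisely one checks the target decomposes and the invariant part is hit.

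With surjectivity of $\mu_{[\go]}$ on $T_{B,o}$ in hand, Theorem~\ref{thm-Grenndensecrit} produces a sequence of points $t_i \in B$ converging to $o$ and parameterizing algebraic (hence projective) fibers $\cX_{t_i}$. Since every fiber of $\pi$ carries the extended $G$-action by construction of the equivariant deformation over $B = \Defo(X)^G$, the family $\pi : \cX \to B$ is itself a $G$-equivariant deformation of $X$, and the subsequence $(t_i)$ exhibits it as a $G$-equivariant algebraic approximation of $X$. This concludes the proof.

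I expect the main obstacle to be the careful construction of the $G$-equivariant universal deformation and the identification of its base tangent space with $H^1(X,T_X)^G$, together with the justification that the target $H^2(X,\cO_X)$ is entirely captured by the $G$-invariant image. The first point relies on the smoothness hypothesis on $\Defo(X)$ combined with the finiteness of $G$ to pass to the smooth $G$-fixed subspace; the second is a matter of tracking the $G$-action through the Hodge-theoretic contraction map, where the subtlety is that surjectivity of $\mu_{[\go]}$ on the full space need not \emph{a priori} restrict to invariants unless one uses $G$-equivariance of $\mu_{[\go]}$, which in turn rests squarely on the $G$-invariance of $[\go]$.
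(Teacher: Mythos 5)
This statement is quoted from \cite[Theorem 9.1]{GrafDefKod0}; the paper gives no proof of it, so your proposal can only be measured against the argument in the cited reference, whose overall strategy (pass to the $G$-fixed locus of the smooth Kuranishi space, identify its tangent space with $H^1(X,T_X)^G$, and run a density argument there) you have correctly reproduced. However, there is a genuine gap at the decisive step. Averaging over the finite group shows that a surjective $G$-equivariant linear map restricts to a surjection from the invariants of the source onto the invariants of the \emph{target}; it does not give surjectivity of $\mu_{[\go]}|_{H^1(X,T_X)^G}$ onto all of $H^2(X,\cO_X)$. Your claim that $H^2(X,\cO_X)^G = H^2(X,\cO_X)$ is unjustified and false in general: a finite automorphism of a compact K\"ahler manifold can act nontrivially on $H^{0,2}$ (e.g.\ non-symplectic automorphisms of surfaces with $K$ trivial act by a nontrivial root of unity on $H^0(\gO^2)$, hence on $H^2(\cO)$). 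Consequently the black-box application of Theorem~\ref{thm-Grenndensecrit} to the family over $\Defo(X)^G$ is not legitimate, since that theorem demands surjectivity onto the full $H^2(X,\cO_X)$.

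The repair is not to cite Theorem~\ref{thm-Grenndensecrit} as stated but to rerun Green's density argument equivariantly: for $b'\in \Defo(X)^G$ and $\lambda\in H^2(X,\bR)^G$, the $(0,2)$-component $\lambda^{0,2}_{b'}$ automatically lies in $H^2(X_{b'},\cO_{X_{b'}})^G$, and rational classes are dense in $H^2(X,\bR)^G$ because $G$ preserves the rational structure; so to produce nearby fibers carrying a $G$-invariant rational K\"ahler class one only needs surjectivity of $\mu_{[\go]}|_{H^1(X,T_X)^G}$ onto $H^2(X,\cO_X)^G$ --- and \emph{that} is exactly what your averaging argument delivers. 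In other words, your equivariance observation is the right ingredient, but it must be fed into an equivariant version of the density criterion (with invariant target), not into the non-equivariant statement. You should also be slightly more careful about the existence of a $G$-action on the total space of the Kuranishi family covering the action on the base; this is where the hypothesis that the deformation space is \emph{universal} (not merely versal) is used.
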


The following is an easy application of Theorem~\ref{thm-Gdensecrit}.

\begin{lem}\label{lem-GaaKtrivsurf}
Let $S$ be a non-algebraic compact Kähler surface and $G$ a finite group acting on $S$. If $K_S \simeq \cO_S$, namely if $S$ is either a K3 surface or a 2-torus, then $S$ has a $G$-equivariant algebraic approximation.
\end{lem}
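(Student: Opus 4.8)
The plan is to apply Theorem~\ref{thm-Gdensecrit} directly to $S$. To do so, I need to verify its two hypotheses: first, that the universal deformation space of $S$ is smooth; and second, that there exists a $G$-invariant K\"ahler class $[\go] \in H^1(S,\gO_S^1)$ for which the composite map $\mu_{[\go]} : H^1(S,T_S) \to H^2(S,\cO_S)$ is surjective. Since $S$ is either a K3 surface or a $2$-torus, its deformations are unobstructed (by the Bogomolov--Tian--Todorov theorem, or directly from the classical deformation theory of K3 surfaces and complex tori), so the universal deformation space is smooth; this disposes of the first hypothesis.

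For the second hypothesis, first I would secure the existence of a $G$-invariant K\"ahler class. The averaging trick works here: starting from any K\"ahler class $[\go_0]$ on $S$, the average $\frac{1}{|G|}\sum_{g \in G} g^*[\go_0]$ is a $G$-invariant class, and it remains K\"ahler because the K\"ahler cone is an open convex cone preserved by the (automorphism-induced) $G$-action on $H^{1,1}(S,\bR)$. This gives a $G$-invariant $[\go] \in H^1(S,\gO_S^1)$.

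The main computation is to check surjectivity of $\mu_{[\go]}$ onto $H^2(S,\cO_S)$. Here I would use that $K_S \simeq \cO_S$ provides a nowhere-vanishing holomorphic $2$-form $\sigma$, giving an isomorphism $T_S \simeq \gO_S^1$ via contraction with $\sigma$. Under this identification the cup product $\cupp [\go] : H^1(S,T_S) \to H^2(S, T_S \otimes \gO_S^1)$ followed by the trace map to $H^2(S,\cO_S)$ becomes, up to the nonzero scalar coming from $\sigma$, essentially the Lefschetz-type operator $H^1(S,\gO_S^1) \to H^2(S,\gO_S^2) \simeq H^2(S,\cO_S)$ given by wedging with the K\"ahler class $[\go]$. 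Since $h^2(S,\cO_S) = 1$, surjectivity amounts to the nonvanishing of a single number, and that number is (a multiple of) $\int_S \go \wedge \go > 0$, which is positive because $[\go]$ is a K\"ahler class. I expect the bookkeeping to trace through the identification $T_S \simeq \gO_S^1$ to be the only delicate point; the positivity of $\int_S \go^2$ makes the final nonvanishing automatic.

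With both hypotheses verified, Theorem~\ref{thm-Gdensecrit} yields a $G$-equivariant algebraic approximation of $S$, completing the proof.
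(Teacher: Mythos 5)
Your proposal is correct and follows essentially the same route as the paper: verify smoothness of the universal deformation space from triviality of $K_S$, use the contraction with a holomorphic symplectic form to identify $T_S \simeq \gO_S^1$ so that $\mu_{[\go]}$ factors as $H^1(S,\gO_S^1) \xto{\cupp[\go]} H^2(S,\gO_S^2) \simeq H^2(S,\cO_S)$, and conclude surjectivity from $[\go]^2 \ne 0$ together with $h^2(S,\cO_S)=1$. The only difference is that you spell out the averaging argument for the $G$-invariant K\"ahler class, which the paper leaves implicit.
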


\begin{proof}
Since $S$ is a surface with trivial $K_S$, the universal deformation space of $S$ is smooth. Also, we have the isomorphism $T_S \simeq \gO_S^1$ defined by the contraction with a fixed holomorphic symplectic form. So for a $G$-invariant Kähler class $[\go]$, the map $\mu_{[\go]}$ defined in Theorem~\ref{thm-Gdensecrit} with $\cX \to B$ replaced by the family of K3 surfaces $\cS_U \to U$ has the factorization
\begin{equation}\label{fact-mu}
\begin{tikzcd}[cramped, row sep = 0, column sep = 40]
\mu_{[\go]} : H^1(S, T_{S}) \simeq H^1(S, \gO_S^1) \arrow[r, "\cupp {[\go]}"] & H^2(S, \gO_S^2)  \simeq H^2(X,\cO_X). \\ 
\end{tikzcd}
\end{equation}
Since $[\go]^2 \ne 0$, the map $\mu_{[\go]}$ is non-zero. Moreover since $h^2(S,\cO_S) = 1$, the map $\mu_{[\go]}$ has to be surjective. Hence Lemma~\ref{lem-GaaKtrivsurf} is a consequence of Theorem~\ref{thm-Gdensecrit}.
\end{proof}

Lemma~\ref{lem-2toreaa} and~\ref{lem-K3aa} concern $C$-locally trivial algebraic approximations of a pair $(S,C)$ for $K$-trivial surfaces.

\begin{lem}\label{lem-2toreaa}
Let $S$ be a non-algebraic 2-torus and let $G$ be a finite group acting on $S$. Let $C \subset S$ be a $G$-invariant curve. Then the pair $(S,C)$ has a $G$-equivariant $C$-locally trivial algebraic approximation.
\end{lem}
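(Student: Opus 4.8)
The plan is to exhibit $S$ as a fibration over an elliptic curve with elliptic (hence abelian) fibers, and then to invoke Corollary~\ref{cor-abaa}.

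First I would analyze the geometry of $C$. Since $S$ is non-algebraic, no component of $C$ can generate $S$: if the subtorus generated by (a translate of) an irreducible component $C_j$ were all of $S$, then the Albanese map of the normalization $\widetilde{C_j}$ would produce a surjection from an abelian variety onto $S$, forcing $S$ to be algebraic. Hence each component of $C$ is a translate of a one-dimensional subtorus. Moreover such a subtorus is unique up to translation: two distinct $1$-dimensional subtori $E_1, E_2$ through the origin would span $S$, so $E_1 \times E_2 \to S$ would be an isogeny and $S$ would again be algebraic. Writing $E$ for this subtorus and $p : S \to E' := S/E$ for the quotient fibration, whose fibers are exactly the translates of $E$ (all isomorphic to the elliptic curve $E$), I conclude that $C$ is a union of fibers of $p$, so that $p(C)$ is a finite set of points.

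Next I would check that $p$ is $G$-equivariant. Every $g \in G$ acts on the torus $S$ by an affine automorphism $g = \rho_g + t_g$, with $\rho_g \in \Aut(S,0)$ a group automorphism and $t_g$ a translation. As $\rho_g$ sends $E$ to a $1$-dimensional subtorus through the origin, the uniqueness established above forces $\rho_g(E) = E$; consequently $g$ descends to an automorphism $\bar g$ of $E' = S/E$ with $p \circ g = \bar g \circ p$. This exhibits $p : S \to E'$ as a $G$-equivariant fibration onto a curve all of whose fibers are the abelian variety $E$.

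With this structure in place the lemma follows at once: $S$ is non-algebraic, $p : S \to E'$ is a $G$-equivariant surjection onto a curve with abelian fibers, and $C$ is $G$-invariant, so Corollary~\ref{cor-abaa} provides a $G$-equivariant $C$-locally trivial algebraic approximation of $(S,C)$. The real content is therefore the structural first step, and it is there that non-algebraicity is used twice — to force the components of $C$ to be translates of subtori, and to force that subtorus to be unique — so that a genuine fibration $p$ over a curve exists and $C$ consists of fibers. (Were one to argue by hand instead, deforming the extension class of $0 \to E \to S \to E' \to 0$ to nearby torsion classes in order to reach abelian surfaces, the algebraicity of the approximating members would be transparent, but the main obstacle would become $G$-equivariance: one would have to locate $G$-fixed torsion extension classes accumulating at the original one, precisely the difficulty that Corollary~\ref{cor-abaa}, via the strongly locally trivial algebraic approximation of~\cite[Theorem~1.6]{HYLbimkod1} together with Lemma~\ref{lem-sltaa}, packages away.)
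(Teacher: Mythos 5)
Your proof is correct and follows essentially the same route as the paper: both exhibit $S$ as a $G$-equivariant elliptic fibration over a curve (with $C$ supported on finitely many fibers) and then invoke Corollary~\ref{cor-abaa}. You merely supply the details the paper asserts in one line --- that the components of $C$ are translates of a unique one-dimensional subtorus and that the quotient fibration is $G$-equivariant (the paper deduces equivariance from ``$G$ sends curves to curves,'' you from the affine structure of torus automorphisms; both work).
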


\begin{proof}
Since $S$ is a non-algebraic 2-torus containing a curve, it is a smooth isotrivial elliptic fibration $f : S \to B$ and the only curves of $S$ are fibers of $f$. As the $G$-action sends curves to curves, the fibration $f$ is $G$-equivariant.  We thus conclude by Corollary~\ref{cor-abaa} that $(S,C)$ admits a $G$-equivariant $C$-trivial algebraic approximation.
\end{proof}

\begin{lem}\label{lem-K3aa}
Let $S$ be a non-algebraic K3 surface and let $G$ be a finite group acting on $S$. Let $C \subset S$ be a $G$-invariant curve. Then $(S,C)$ has a $G$-equivariant $C$-locally trivial algebraic approximation. When the algebraic dimension $a(S)$ of $S$ is zero, more precisely the deformation $\cS \to \gD$ of $S$ over the Noether-Lefschetz locus preserving the classes of each irreducible component of $C$ in the universal deformation of $S$ preserving the $G$-action is  a $G$-equivariant $C$-locally trivial algebraic approximation.
\end{lem}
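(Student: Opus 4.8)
The plan is to split the argument according to the algebraic dimension $a(S) \in \{0,1\}$, the two values allowed by the non-algebraicity of $S$. Suppose first that $a(S) = 1$. Then the algebraic reduction of $S$ is an elliptic fibration $f : S \to \bP^1$ whose general fibre is an elliptic curve, hence a one-dimensional abelian variety. Since the algebraic reduction is canonical, the $G$-action descends to $\bP^1$ and makes $f$ a $G$-equivariant fibration, and the $G$-invariant curve $C$ meets only finitely many fibres of $f$ by Campana's criterion. Thus Corollary~\ref{cor-abaa} applies verbatim and produces a $G$-equivariant $C$-locally trivial algebraic approximation of $(S,C)$. This disposes of the case $a(S)=1$ in one line, and the remaining substance is the case $a(S)=0$ described in the statement.

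For $a(S) = 0$ I would argue by periods. Here every irreducible curve on $S$ is a smooth rational $(-2)$-curve, so the classes $[C_1],\dots,[C_k]$ of the irreducible components of $C$ span a negative-definite, $G$-stable sublattice $V_C \subset \NS(S) \subset H^{1,1}(S)$, and each $C_j$ is rigid. The universal deformation of $S$ is smooth of dimension $20$ and carries a $G$-action fixing the base point, so its $G$-fixed locus is smooth. Inside it I take the Noether--Lefschetz sublocus $\gD$ cut out by the holomorphic conditions $\langle \sigma_t, [C_j]\rangle = 0$ for all $j$, which keep each $[C_j]$ of type $(1,1)$; by local Torelli this is a smooth sub-period-domain, and it is $G$-invariant because $G$ preserves $V_C$. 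Over $\gD$ the classes $[C_j]$ stay $(1,1)$ and, having square $-2$, remain effective (Riemann--Roch) and rigid (their normal bundle is $\cO_{\bP^1}(-2)$), so $C$ deforms to a relative configuration $\cC \subset \cS$ of $(-2)$-curves.

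Two points then remain. For the density of algebraic members, note that $V_C^{\perp}$ has signature $(3,19-k)$, so $\gD$ has positive-definite directions: writing $[\go]$ for a $G$-invariant Kähler class and $v_0$ for its orthogonal projection onto $V_C^{\perp}$, one has $v_0^2 = [\go]^2 - (\mathrm{proj}_{V_C}[\go])^2 \geq [\go]^2 > 0$ since $V_C$ is negative-definite; this positivity is exactly what drives the density of projective periods inside $\gD$, in the $G$-equivariant spirit of Theorem~\ref{thm-Gdensecrit} and Lemma~\ref{lem-GaaKtrivsurf}, yielding a sequence of projective K3 surfaces in $\gD$ converging to $S$. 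For $C$-local triviality I compute the Kodaira--Spencer class along a neighbourhood $U$ of $C$ retracting onto it: since $C$ is a negative-definite $(-2)$-configuration, $H^2(U,\bC)$ is spanned by the $[C_j]$ and purely of type $(1,1)$, so the symplectic isomorphism $T_S \simeq \gO_S^1$ identifies $H^1(U,T_U) \simeq H^1(U,\gO_U^1) \simeq \bigoplus_j \bC[C_j]$, with the restriction $H^1(S,T_S)\simeq H^{1,1}(S) \to H^1(U,T_U)$ given by $\alpha \mapsto (\langle \alpha,[C_j]\rangle)_j$. Its kernel is precisely $V_C^{\perp}$, the tangent space of $\gD$, so the Kodaira--Spencer class of $\cS \to \gD$ restricts to $0$ near $C$, fibrewise over $\gD$. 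Integrating this vanishing against the rigidity of the configuration trivializes a neighbourhood of $\cC$, giving an open $\cV \subset \cS$ with $\cV \simeq V \times \gD$ over $\gD$ and $V \supset C$; feeding this into Lemma~\ref{lem-Gloctriv} upgrades it to a $G$-equivariant trivialization and shows $\cS\to\gD$ is the desired $G$-equivariant $C$-locally trivial algebraic approximation.

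The main obstacle is the genuine, all-order $C$-local triviality: the cohomological computation only shows that the Kodaira--Spencer class dies near $C$, and one must integrate this to an actual product structure on a neighbourhood, which is where the rigidity of the $(-2)$-configuration, together with Lemma~\ref{lem-Gloctriv}, does the real work. A secondary technical point is reconciling the density of algebraic members with $G$-equivariance when the action on $S$ is non-symplectic, since then the naive Green map $\mu_{[\go]}$ vanishes on $G$-invariant deformation directions and density must be established directly inside the $G$-equivariant sub-period-domain rather than quoted off the shelf from Theorem~\ref{thm-Gdensecrit}.
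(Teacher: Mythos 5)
Your case split and the density argument are essentially the paper's: for $a(S)=1$ you reduce to Corollary~\ref{cor-abaa} exactly as in the paper, and for $a(S)=0$ you run Green's criterion on the $G$-invariant Noether--Lefschetz locus, producing a class $v_0\in V_C^{\perp}$ with $v_0\cdot[\go]\ne 0$. But the crucial step --- the $C$-local triviality of $\cS\to\gD$ --- is where your proposal has a genuine gap, and you say so yourself. The cohomological computation you offer is not sound as stated: for an open tubular neighbourhood $U$ of the $(-2)$-configuration there is no Hodge-theoretic identification $H^1(U,\gO_U^1)\simeq H^2(U,\bC)$, so the claim $H^1(U,T_U)\simeq\bigoplus_j\bC[C_j]$ with restriction map $\alpha\mapsto(\langle\alpha,[C_j]\rangle)_j$ is at best a heuristic. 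More importantly, even granting that the Kodaira--Spencer class of $\cS\to\gD$ dies in $H^1(U,T_U)$, this is only a first-order statement; ``integrating'' it to an actual product structure on a neighbourhood of $\cC$ is precisely the content of the lemma, and Lemma~\ref{lem-Gloctriv} cannot supply it --- that lemma takes an \emph{already given} trivialization $\cV\simeq V\times\gD$ and merely makes it $G$-equivariant. The paper's mechanism is different and complete: it contracts each connected tree of $(-2)$-curves fibrewise over $\gD$ to a rational double point (Riemenschneider's simultaneous contraction,~\cite{RiemenschneiderDefoRatSing}), uses that the singularity type is constant in the family and that germs of a rational double point of a fixed type are all isomorphic to produce a trivialization of a neighbourhood of the contracted locus, and then transports this back to $(\cS,\cC)$ via Lemma~\ref{lem-defcontrpoint}. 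Some such concrete geometric input is needed; your proposal does not contain it.

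A secondary point: you leave unresolved the possibility that the $G$-action is non-symplectic, in which case the symplectic identification $H^1(S,T_S)^G\simeq H^{1,1}(S)^G$ underlying your period-domain argument breaks down. In fact this case cannot occur, and the paper disposes of it as its very first step: if $G$ did not preserve the holomorphic symplectic form, $H^2(S,\bZ)^G$ would be a sub-Hodge structure concentrated in bidegree $(1,1)$; averaging a K\"ahler class then produces a K\"ahler class in $H^2(S,\bZ)^G$, forcing $S$ to be projective, contrary to hypothesis. Adding this observation would close your second gap, but the $C$-local triviality step still requires the contraction argument.
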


\begin{proof}

First we note that since $H^2(S,\bZ)^G$ is a sub-$\bZ$-Hodge structure of $H^2(S,\bZ)$ of weight 2, if the $G$-action does not preserve the holomorphic symplectic form, then $H^2(S,\bZ)^G$ is concentrated in bi-degree $(1,1)$. As the intersection of $H^{1,1}(S)^G$ with the Kähler cone $\mathcal{K}_S \subset H^2(S,\bC)$ is not $0$, we deduce that $H^2(S,\bZ)^G$ contains a Kähler class, which is in contradiction with the hypothesis that $S$ is non-algebraic. We deduce that the $G$-action preserves the holomorphic symplectic form of $S$.

Since $S$ is assumed to be non-algebraic, according to whether $a(S) = 0$ or $1$ only two situations can happen:
\begin{enumerate}
\item  $a(S) = 0$: every curve in $S$ is a disjoint union of trees of smooth $(-2)$-curves intersecting transversally;
\item  $a(S) = 1$:  $S$ is an elliptic fibration $f : S \to B$ and the $G$-action sends fibers to fibers.
\end{enumerate}
In the second situation, we can apply Corollary~\ref{cor-abaa} to get a $G$-equivariant $C$-locally trivial algebraic approximation of $(S,C)$ as we did in the proof of Lemma~\ref{lem-2toreaa}.

In the first situation, let us write $C = \cup_{i \in I} C_i$ where the $C_i$'s are irreducible components of $C$. 
%First we show that there exists a $G$-equivariant algebraic approximation of $S$ inducing for each $i$, a $G$-equivariant algebraic approximation of the pair $(S,C_i)$. To this end, it suffices to show that the Noether-Lefschetz locus $U$ over which the curve class $[C_i] \in H^{1,1}(S)$ remains of type $(1,1)$ in the universal deformation space of $S$ preserving the $G$-action has a dense (for the Euclidean topology) subset of points parameterizing algebraic members.
Since the universal deformation space of $S$ is smooth, its locus preserving the $G$-action can be identified with an open subset of $H^1(S,T_S)^G$. As the group action $G$ on $S$ is symplectic, the isomorphism $T_S \simeq \gO_S^1$ defined by the contraction with a fixed holomorphic symplectic form induces an isomorphism 
$$H^1(S,T_S)^G \simeq H^1(S,\gO_S^1)^G.$$
Under this identification, the universal deformation space $\gD$ of $S$ preserving the $G$-action and the curve classes $[C_i]$ can be identified with an open subset $U$ of 
$$V \colonec H^1(S,\gO_S^1)^G\cap \<[C_i] \>^\perp_{i\in I}$$
where $ \<[C_i] \>_{i\in I}$ denotes the linear subspace of $H^1(S,\gO_S^1)$ spanned by the classes $[C_i]$ and the orthogonality is defined with respect to the cup product. Since $ \<[C_i] \>_{i\in I}$ is $G$-invariant  and since the $G$-action preserves the cup product, the orthogonal $ \<[C_i] \>^\perp_{i\in I}$ is also $G$-invariant. Therefore $V =  \<[C_i] \>^\perp_{i\in I}$.

Since $S$ is not algebraic, the curve classes $[C_i]$ cannot generate the whole $H^1(S,\gO_S^1)$, hence $V \ne 0$ and let $v$ be a non-zero element in $V$. As $C_i^2 < 0$ for all $i$, by the Hodge index theorem  $v^2 >0$. If $[\go]$ is a Kähler class, then again by the Hodge index theorem we have $v \cdot  [\go] \ne 0$. Using the factorization~\eqref{fact-mu}, we see again that since  $h^2(S,\cO_S) = 1$, the map $\mu_{[\go]}$ defined in Theorem~\ref{thm-Grenndensecrit} with $\cX \to B$ replaced by the $G$-equivariant deformation $\cS \to \gD$ of $S$ over the Noether-Lefschetz locus $\gD$, is surjective. Therefore by Theorem~\ref{thm-Grenndensecrit}, $\cS \to \gD$ is an algebraic approximation of $S$. Since the curve classes $[C_i] \in H^2(S,\bC)$ remains of type $(1,1)$, $\cS \to \gD$ induces 
  for each $i$, a deformation $(\cS,\cC_i)$ of the pair $(S,C_i)$. It remains to show that $(\cS,\cC \colonec \cup_{i \in I} \cC_i) \to \Delta$ is a $C$-locally trivial deformation. 

Let us decompose $C = \sqcup_{i=1}^m C'_i$ into its connected components. As we mentioned before,  each $C'_i$ is a tree of smooth $(-2)$-curves intersecting transversally. Therefore up to shrinking $\gD$, if $\cC = \sqcup_{i=1}^m \cC'_i$ denotes the decomposition of $\cC$ into its connected components, then up to reordering the indices $i$, each fiber of $\cC'_i \to \gD$ is still a tree of $(-2)$-curves isomorphic to $C'_i$.

Since a tree of smooth $(-2)$-curve on a surface can be contracted to a rational double point, there exists a bimeromorphic morphism $\nu : \cS \to \cS'$ over $\gD$ such that for each fiber $\cS_t$ of $\cS \to \gD$, the restriction of $\nu$ to $\cS_t$ is the contraction of $\cC'_i \cap \cS_t$ to a rational double point~\cite[Theorem 2]{RiemenschneiderDefoRatSing}. Since fibers of $\cC'_i \to \gD$ are all isomorphic, the singularity type of $\nu(\cC'_i \cap \cS_t) \subset \cS_t$ does not depend on $t \in \gD$. As the germs of a rational double point of a fixed type on a surface are all isomorphic, up to shrinking $\gD$ there exists a neighborhood $\cU_i \subset \cS'$ of $\nu(\cC'_i)$ such that the pair $\(\cU_i, \nu(\cC'_i)\)$ is isomorphic over $\gD$ to the trivial product $\(U_i \times \gD, \nu(C'_i)\)$ with $U_i \colonec \cU_i \cap \nu(S)$. It follows that $\(\cS',\nu(\cC)\) \to \gD$ is a $\nu(C)$-locally trivial deformation of $\(\nu(S), \nu(C)\)$, hence $(\cS, \cC) \to \gD$ is $C$-locally trivial by Lemma~\ref{lem-defcontrpoint}.
\end{proof}

For the sake of completeness, we conclude the present subsection by the following proposition which will not be used latter in the article. It is the generalization of~\cite[Lemma 5.1]{Schrackdefo} in the $G$-equivariant setting.

\begin{pro}
Let $S$ be a compact Kähler surface and $G$ a finite group acting on $S$. Whenever $C \subset S$ is a curve or empty, the pair $(S,C)$ has a $G$-equivariant $C$-locally trivial algebraic approximation.
\end{pro}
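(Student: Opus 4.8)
The plan is to invoke the Enriques--Kodaira classification of compact K\"ahler surfaces and thereby reduce the statement to the equivariant results already proved for $K$-trivial surfaces and for fibrations with abelian fibers. First I would dispose of the case where $S$ is algebraic: here the constant family $(S \times \gD, C \times \gD) \to \gD$, equipped with the product $G$-action, is a $G$-equivariant $C$-locally trivial deformation of $(S,C)$ all of whose fibers are algebraic, hence is (trivially) a $G$-equivariant $C$-locally trivial algebraic approximation. So from now on I may assume that $S$ is non-algebraic.

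The next step is to use the classification to pin down the possible $S$. Since compact K\"ahler surfaces of general type are projective, a non-algebraic compact K\"ahler surface satisfies $\gk(S) \le 1$; and since compact K\"ahler surfaces with $\gk = -\infty$ are rational or ruled and hence projective, one has in fact $\gk(S) \in \{0,1\}$. In the case $\gk(S) = 0$, among the four types (K3, Enriques, $2$-torus, bielliptic) the Enriques and bielliptic surfaces are projective, so $S$ is either a non-algebraic K3 surface or a non-algebraic $2$-torus. When $C$ is a nonempty curve, Lemma~\ref{lem-K3aa} (respectively Lemma~\ref{lem-2toreaa}) supplies the desired $G$-equivariant $C$-locally trivial algebraic approximation; when $C = \emptyset$, Lemma~\ref{lem-GaaKtrivsurf} supplies a $G$-equivariant algebraic approximation, which is vacuously $C$-locally trivial.

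It remains to treat $\gk(S) = 1$. Here the Iitaka fibration $f : S \to B$ onto a smooth curve $B$ has elliptic curves as its general fibers. Because $f$ is defined by a pluricanonical linear system, it is canonically attached to $S$, so the $G$-action descends along $f$ to a $G$-action on $B$, making $f$ a $G$-equivariant fibration. As an elliptic curve is an abelian variety and $S$ is non-algebraic, Corollary~\ref{cor-abaa} then yields a $G$-equivariant $C$-locally trivial algebraic approximation of $(S,C)$ for every $G$-invariant subvariety $C$, completing the proof.

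The only non-formal ingredient is the classification step, which guarantees that the non-algebraic compact K\"ahler surfaces are precisely those covered by Lemmas~\ref{lem-K3aa}, \ref{lem-2toreaa}, \ref{lem-GaaKtrivsurf} and Corollary~\ref{cor-abaa}; once this is in place the argument is a direct appeal to those results. I expect the main point requiring care to be the verification that the hypotheses of Corollary~\ref{cor-abaa} hold in the case $\gk(S) = 1$, namely the $G$-equivariance of the elliptic Iitaka fibration, which is the only place where one must genuinely check an earlier result's hypotheses rather than simply quote its conclusion.
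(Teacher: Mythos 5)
There is a genuine gap in your $\kappa(S)=0$ case: from the classification you may only conclude that the \emph{minimal model} of $S$ is a non-algebraic K3 surface or a non-algebraic $2$-torus, not $S$ itself. The proposition makes no minimality assumption, so $S$ could be, say, the blow-up of a non-algebraic K3 surface at a $G$-orbit of points, to which Lemmas~\ref{lem-GaaKtrivsurf}, \ref{lem-2toreaa} and~\ref{lem-K3aa} do not apply directly. The missing step is to pass to the minimal model $\nu : S \to S'$ (which is $G$-equivariant, since $G$ permutes the exceptional curves), obtain a $G$-equivariant $\nu(C)$-locally trivial algebraic approximation of $(S',\nu(C))$ from those lemmas --- taking care to include the images of the exceptional curves in the subvariety along which the deformation is trivialized --- and then lift it back to a $C$-locally trivial algebraic approximation of $(S,C)$ via Lemma~\ref{lem-defcontrpoint}. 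This is exactly how the paper closes the loop, and without it your sentence ``$S$ is either a non-algebraic K3 surface or a non-algebraic $2$-torus'' is simply false.

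A secondary, more cosmetic difference: the paper organizes the case distinction by algebraic dimension ($a(S)=1$: $S$ is an elliptic fibration, apply Corollary~\ref{cor-abaa}; $a(S)=0$: reduce to the minimal model) rather than by Kodaira dimension. Your $\kappa(S)=1$ branch is sound --- the Iitaka fibration is canonically attached, hence $G$-equivariant, and its general fiber is an elliptic curve, so Corollary~\ref{cor-abaa} applies --- and it is subsumed by the paper's $a(S)=1$ branch. But note that a non-algebraic surface with $\kappa=0$ may still have $a(S)=1$ (an elliptic K3 or $2$-torus, possibly blown up); under the paper's division such surfaces are handled directly by Corollary~\ref{cor-abaa}, whereas under yours they fall into the broken $\kappa=0$ branch. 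Once you insert the minimal-model reduction the two routes become equivalent, since Lemma~\ref{lem-2toreaa} and the $a=1$ case of Lemma~\ref{lem-K3aa} themselves reduce to Corollary~\ref{cor-abaa}.
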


\begin{proof}
We may assume that $S$ is non-algebraic. If the algebraic dimension $a(S)$ of $S$ is $1$, then $S$ is an elliptic fibration and we can use Corollary~\ref{cor-abaa} to conclude. If $a(S) = 0$, then the minimal model $S'$ of $S$ is either a 2-torus or a K3 surface and the map $\nu : S \to S'$ is $G$-equivariant. By Lemma~\ref{lem-GaaKtrivsurf},~\ref{lem-2toreaa}, and~\ref{lem-K3aa}, the pair $(S', \nu(C))$ has a $G$-equivariant $\nu(C)$-locally trivial algebraic approximation. Hence by  Lemma~\ref{lem-defcontrpoint}, $(S, C)$ has a $G$-equivariant $C$-locally trivial algebraic approximation.
\end{proof}

\ssec{K3 fibrations}

\begin{lem}\label{lem-K3fibaa}
Let $X \colonec S \times B$ where $S$ is a non-algebraic K3 surface and $B$ is a smooth projective curve. Let $G$ be a finite group acting  on $B$ and  on $S$ and let $G$ act on $X$ by the product action. Whenever $C \subset X$ is a $G$-invariant curve or empty, the pair $(X,C)$ has a $G$-equivariant $C$-locally trivial algebraic approximation. 
\end{lem}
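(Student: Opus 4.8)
The plan is to deform $S$ to projective K3 surfaces while keeping $B$ fixed, to take the products of these deformations with $B$, and to invoke the surface case together with the $G$-equivariant trivialization machinery of Section~\ref{sec-deformation}. Since a product of projective varieties is projective, the product family will be an algebraic approximation of $X$, and the real work is to arrange $C$-local triviality $G$-equivariantly. Consider the $G$-equivariant first projection $p_1 : X = S \times B \to S$ and decompose $C = C' \cup C''$, where $C'$ is the union of the irreducible components contracted by $p_1$ and $C''$ the union of those dominating a curve of $S$. Each component of $C'$ maps to a point of $S$ and, being a curve inside the corresponding fibre $\{s\} \times B \simeq B$, has the form $\{s_j\} \times B$; the resulting finite set $\{s_1,\dots,s_n\} \subset S$ is $G$-invariant. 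Set $C_S \colonec p_1(C'') \subset S$, a $G$-invariant curve (possibly empty), so that every component of $C''$ is contained in $\Gamma \times B$ for some irreducible component $\Gamma$ of $C_S$.

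First I would apply Lemma~\ref{lem-K3aa} (or Lemma~\ref{lem-GaaKtrivsurf} when $C_S = \emptyset$) to obtain a $G$-equivariant $C_S$-locally trivial algebraic approximation $\cS \to \gD$ of the pair $(S,C_S)$; this is a smooth family of K3 surfaces. Endowing $\cX \colonec \cS \times B$ with the product $G$-action and with the map $\pi : \cX \to \cS \to \gD$, we obtain a $G$-equivariant deformation of $X$ over $\gD$ whose members over the algebraic fibres of $\cS \to \gD$ are products of projective varieties, hence projective. Thus $\pi$ is already a $G$-equivariant algebraic approximation of $X$, and it remains only to control a neighbourhood of $C$.

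Next I would produce a (not necessarily $G$-invariant) trivial neighbourhood of $C$ to feed into Lemma~\ref{lem-Gloctriv}. Near $C_S$ the $C_S$-local triviality of $\cS \to \gD$ gives an open $\cW \simeq W \times \gD$ over $\gD$ with $W \colonec \cW \cap S \supset C_S$. Near each point $s_j \notin C_S$, the smoothness of $\cS \to \gD$ lets one lift a generator of the tangent field of $\gD$ to a holomorphic vector field on a neighbourhood of $s_j$ and integrate it, producing a trivialization $\cW_j \simeq W_j \times \gD$ over $\gD$ with $s_j \in W_j \colonec \cW_j \cap S$; as the $s_j$ lie off the closed set $C_S$, the $W_j$ may be taken pairwise disjoint and disjoint from $W$. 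Their union yields $\cV_S \simeq V_S \times \gD$ over $\gD$ with $V_S \colonec \cV_S \cap S \supset C_S \cup \{s_1,\dots,s_n\}$, whence $\cV \colonec \cV_S \times B \simeq (V_S \times B) \times \gD$ over $\gD$ with $V \colonec \cV \cap X = V_S \times B \supset C$, since every component of $C'$ lies in some $\{s_j\} \times B$ and every component of $C''$ lies in some $\Gamma \times B \subset C_S \times B$.

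Finally I would apply Lemma~\ref{lem-Gloctriv} to the smooth variety $X$, the $G$-equivariant deformation $\pi : \cX \to \gD$, the $G$-invariant curve $C$, and the open set $\cV$ just constructed (using, when $\dim \gD > 1$, the version of Lemma~\ref{lem-Gloctriv} valid over a base of arbitrary dimension, as noted in the remark following Lemma~\ref{lem-strloctrivG}). This upgrades $\cV$ to a $G$-equivariant trivialization of a neighbourhood of a $G$-invariant subvariety $\cC \subset \cX$ extending $C$, so that $\pi : (\cX,\cC) \to \gD$ is the desired $G$-equivariant $C$-locally trivial algebraic approximation. \textbf{The main obstacle} is the presence of the vertical components $\{s_j\} \times B$, which forces one to follow the points $s_j$ of $S$ through a deformation that genuinely moves $S$: this cannot come from the $C_S$-local triviality alone, and is precisely what the smoothness of $\cS \to \gD$ (local triviality near a point) combined with the $G$-equivariant globalization of Lemma~\ref{lem-Gloctriv} resolves.
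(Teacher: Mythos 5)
Your proposal is correct and follows essentially the same route as the paper's proof: project $C$ to $S$ via the $G$-equivariant $p_1$, apply Lemma~\ref{lem-GaaKtrivsurf} and Lemma~\ref{lem-K3aa} to obtain a $G$-equivariant locally trivial algebraic approximation $\cS \to \gD$ of $S$ around the image of $C$, take the product family $\cS \times B \to \gD$, and upgrade the trivialized neighborhood $G$-equivariantly with Lemma~\ref{lem-Gloctriv}. The only difference is bookkeeping: the paper simply sets $C' \colonec p_1(C)$, so the images $s_j$ of the vertical components are absorbed as isolated points of $C'$ and their neighborhoods are trivialized together with the curve part, whereas you separate them and trivialize near each $s_j$ using smoothness of $\cS \to \gD$ --- a harmless (indeed slightly more careful) refinement, modulo the small slip that an $s_j$ need not lie off $C_S$ as you assert, though in that case it is already covered by the neighborhood $W$ of $C_S$, so nothing breaks.
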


\begin{proof}
Let $p_1 : S \times B \to S$ denote the first projection. As the $G$-action on $S \times B$ is a product action, the image $C' \colonec p_1(C)$ is a $G$-invariant curve. By Lemma~\ref{lem-GaaKtrivsurf} and~\ref{lem-K3aa}, there exists a $G$-equivariant $C'$-locally trivial algebraic approximation $\pi: (\cS,\cC') \to \gD$ of the pair $(S,C')$. Let $\cU \subset \cS$ be a neighborhood of $\cC'$ such that there exists an isomorphism $\cU \simeq U \times \gD$ over $\gD$, so $U \times B \times \gD \simeq \cU \times B$ over $\gD$. Since $U \times B$ is a neighborhood of $C$ and since $C$ is $G$-invariant, Lemma~\ref{lem-Gloctriv} implies that the algebraic approximation $\Pi : \cX \colonec \cS \times B \to \gD$ of $X$ defined by the composition of $\pi$ with the first projection $\cS \times B \to \cS$ induces a $C$-locally trivial algebraic approximation of $(X,C)$.
\end{proof}

\ssec{2-torus fibrations}

\hfill

Before we study the existence of ($C$-)locally trivial algebraic approximations of a pair $(X,C)$ in the case of 2-torus fibrations, let us first prove a statement concerning the existence of multisections of a torus fibration \emph{via} strongly locally trivial perturbation.

\begin{lem}\label{lem-multsec}
Let $f : X \to B$ be a smooth torus fibration whose total space $X$ is compact Kähler. There exists an arbitrarily small strongly locally trivial deformation $f' : X' \to B$ of $f$ such that $f'$ has a multisection.
Moreover if $f$ is endowed with an $f$-equivariant $G$-action for some finite group $G$, then one can choose the above deformation to be $G$-equivariant.
\end{lem}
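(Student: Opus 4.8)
The plan is to encode $f$ by the torsor data of the relative Jacobian and to produce the required multisection by rendering that data torsion after an arbitrarily small change of the torsor structure.

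First I would record the structure. A smooth torus fibration $f\colon X\to B$ is a torsor under its relative Jacobian $J=\cH/\gL$, where $\gL\colonec R^1f_*\bZ$ is the local system of lattices and $\cH\colonec R^1f_*\cO_X$ is the associated holomorphic vector bundle, fitting into a short exact sequence $0\to\gL\to\cH\to\cJ\to0$ with $\cJ$ the sheaf of holomorphic sections of $J\to B$. Isomorphism classes of such torsors are classified by $H^1(B,\cJ)$; I write $\eta=\eta(X)\in H^1(B,\cJ)$ for the class of $X$ and $c(\eta)\in H^2(B,\gL)$ for its topological invariant, where $c$ is the connecting map. I would then use those strongly locally trivial deformations of $f$ that alter \emph{only} the torsor structure: these keep $\gL$, $\cH$ and $c(\eta)$ fixed and correspond exactly to moving $\eta$ inside the coset $\eta+\Ima(\alpha)$, where $\alpha\colon H^1(B,\cH)\to H^1(B,\cJ)$ is induced by $\cH\to\cJ$. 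Since $H^1(B,\cH)$ is a finite-dimensional $\bC$-vector space, every $w\in H^1(B,\cH)$ yields such a deformation, and for $w$ small the total space remains compact Kähler by the openness of the Kähler condition under small deformations.

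Next I would establish the criterion that the deformed fibration $f'$ with torsor class $\eta'$ has a multisection if and only if $\eta'$ is torsion. For the direction needed, suppose $n\eta'=0$. From $0\to J'[n]\to\cJ'\xrightarrow{\,\cdot n\,}\cJ'\to0$ the class $\eta'$ lifts to some $\bar\eta\in H^1(B,J'[n])$; the $J'[n]$-torsor $T\to B$ defined by $\bar\eta$ is a finite étale cover of $B$, and the tautological map $T\to T\times^{J'[n]}J'=X'$ embeds $T$ as a multisection of $f'$. (The converse, which I do not need, follows from the transfer identity $\rho_*\rho^*=n$ applied to a degree-$n$ multisection $\rho$.)

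The heart of the argument is then to find a torsion class arbitrarily close to $\eta$ inside $\eta+\Ima(\alpha)$. Since $X$ is compact Kähler, the Leray spectral sequence of $f$ degenerates over $\bQ$ (Deligne--Blanchard), which forces $c(\eta)$ to be torsion; say $m\,c(\eta)=0$, so that $m\eta\in\Ima(\alpha)$. Identifying $\Ima(\alpha)\cong H^1(B,\cH)/\Gamma_B$ with $\Gamma_B\colonec\Ima\!\big(H^1(B,\gL)\to H^1(B,\cH)\big)$, the torsion classes of $H^1(B,\cJ)$ lying in $\eta+\Ima(\alpha)$ are precisely the images of the $\bQ$-points of $H^1(B,\cH)$, and they cluster at $\eta$ as soon as a lift $u_0\in H^1(B,\cH)$ of $m\eta$ lies in the real subspace $V_\bR\colonec\Gamma_B\otimes\bR$, since torsion is dense in the real subtorus $V_\bR/\Gamma_B$. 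The step $u_0\in V_\bR$ is the main obstacle and is where the Kähler hypothesis enters a second time: I would deduce it from the compatibility of the Hodge filtration with the real structure $\gL\otimes\bR\cong\cH$, equivalently from the flat $C^\infty$ multisection furnished by the torsion of $c(\eta)$. Granting this, I pick a torsion class $\eta'\in\eta+\Ima(\alpha)$ arbitrarily close to $\eta$, and the corresponding arbitrarily small strongly locally trivial deformation $f'$ has a multisection by the criterion above.

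Finally, for an $f$-equivariant action of a finite group $G$, all the sheaves and cohomology groups above are $G$-equivariant and $\eta$ is $G$-invariant, so I would run the same argument inside the invariant parts $H^1(B,\cH)^G$ and $H^1(B,\cJ)^G$. Choosing the perturbation $w\in H^1(B,\cH)^G$ and the torsion class $\eta'$ to be $G$-invariant (using density of torsion in the $G$-invariant real subtorus), the total space $X'$ inherits the $G$-action and the resulting family over $\gD$ is $G$-equivariant, which gives the moreover part of the statement.
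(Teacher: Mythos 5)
Your overall strategy is the same as the paper's: realize $f$ as a torsor under its relative Jacobian, note that deforming only the torsor class inside the image of $H^1(B,\cE)\to H^1(B,\cJ)$ produces strongly locally trivial (and $G$-equivariant, after passing to invariants) deformations, invoke the criterion ``multisection $\Leftrightarrow$ torsion class,'' and use the K\"ahler hypothesis to find torsion classes arbitrarily close to $\eta(f)$. Two remarks on the set-up before the main point. First, for fibres of dimension $g\ge 2$ the group acting on the fibres by translation is the Albanese-type torus, whose sheaf of sections is $R^{2g-1}f_*\bC/\bigl(R^{g-1}f_*\gO^g_{X/B}+R^{2g-1}f_*\bZ\bigr)$ as in the paper, not your $R^1f_*\cO_X/R^1f_*\bZ$, which is the dual torus fibration and is not what $X$ is a torsor under; this is a repairable bookkeeping error but it matters in the applications (2-torus fibrations). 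Second, your one-line justification that $c(\eta)$ is torsion via Leray degeneration is thin; the paper needs the $G$-equivariant refinement of this fact ([Claudon, Prop.\ 2.11]) to get the base point $v_0$ inside $V^G$, though an averaging argument could substitute for it.

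The genuine gap is at the step you yourself flag as ``the main obstacle,'' namely that a lift $u_0$ of $m\eta$ lies in $V_\bR=\Ima\bigl(H^1(B,\gL)\otimes\bR\to H^1(B,\cH)\bigr)$. Your proposed justification --- ``compatibility of the Hodge filtration with the real structure $\gL\otimes\bR\cong\cH$'' --- is a statement about stalks/fibres and gives no control over the map on $H^1(B,-)$: a morphism of sheaves that is fibrewise an isomorphism of real vector spaces (from a local system to a coherent sheaf) need not induce a surjection on cohomology of the base. The input that actually closes this step, and the one the paper uses, is the Deligne--Zucker theorem: $W:=H^1(B,\bH_\bZ)$ underlies a pure Hodge structure of weight $2g$ concentrated in bidegrees $(g\pm 1,g\mp 1)$ and $(g,g)$, with $H^1(B,\cE)\cong W_\bC/F^gW_\bC$. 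Since $F^gW_\bC+\overline{F^gW_\bC}=W_\bC$, the map $W_\bR\to H^1(B,\cE)$ is \emph{surjective}; hence $V_\bR$ is the whole parameter space, $\mu(W_\bQ)$ (and $\mu(W_\bQ^G)$, by finiteness of $G$) is dense in it, the condition $u_0\in V_\bR$ is automatic, and the translates $v-v_0/m$ of the rational points give a dense set of torsion classes. Without invoking this purity/degeneration result your argument does not establish $u_0\in V_\bR$, nor the density of the relevant torsion classes, so the proof does not close as written.
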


\begin{proof}
The construction of an arbitrarily small deformation of $f$ possessing a multi-section already appeared in~\cite{ClaudonToridefequiv}. We will recall how this deformation is constructed and prove that it is strongly locally trivial along the way.

Let $J \to B$ be the Jacobian fibration associated to $f$ and $\cJ$ its sheaf of sections. The sheaf can be defined by the exact sequence
$$
\begin{tikzcd}[cramped, row sep = 5, column sep = 40]
0 \ar[r] & \bH_\bZ  \arrow[r]  & \cE \ar[r] & \cJ \ar[r] & 0 \\
\end{tikzcd}
$$
where $\bH_\bZ \colonec R^{2g-1}f_*\bZ$ and $\cE \colonec \cH / \cH^{g,g-1} \colonec R^{2g-1}f_*\bC / R^{g-1}f_*\gO^g_{X/B}$.  To each isomorphism class of $J$-torsor $g : Y \to B$, one can associate in a biunivocal way, an element $\eta(g) \in  H^1(B,\cJ)$ satisfying the property that $g$ has a multisection if and only if $\eta(g)$ is torsion (\emph{cf.}~\cite[Section 2.2]{ClaudonToridefequiv}). Moreover, if  $\exp : V \colonec H^1(B ,\cE) \to H^1(B, \cJ)$ denotes the morphism induced by the quotient $\cE \to \cJ$, then there exists a family
\begin{equation}\label{fam-Jtors}
\begin{tikzcd}[cramped, row sep = 20, column sep = 40]
\cX  \arrow[r, "q"] \arrow[d, "\pi", swap] & V \times B  \ar{dl}{\pr_1} \\
V &  \\
\end{tikzcd}
\end{equation}
of $J$-torsor such that for each $v \in V$, the element in $V$ associated to the $J$-torsor $\pi^{-1}(v) \to B$ is $\eta(f) + \exp(v)$. 

Concretely, the above family is constructed as follows. The map
\begin{equation*}
\begin{split}
V & \to  H^1(B, \cJ) \\
v & \mapsto  \eta(f) + \exp(v), 
\end{split}
\end{equation*}
defines an element
$${\eta}^V \in \Map\(V, H^1(B,\cJ)\) \simeq H^0(V,\cO_V) \otimes H^1(B,\cJ) \simeq H^1(V \times B, \pr_2^*\cJ)$$
where $\Map\(V, H^1(B,\cJ)\)$ denotes the space of holomorphic maps between $V$ and $H^1(B,\cJ)$. So one can find a covering $\cup_{i=1}^nU_i = B$ of $B$ by open subsets such that ${\eta}^V$ represents a \v{C}ech 1-cocycle
$$\eta^V_{ij} \in \Gamma(V \times U_{ij}, \pr_2^*\cJ) \simeq \Map\(V, \Gamma(U_{ij}, \cJ)\)$$
where $U_{ij} \colonec U_i \cap U_j$. Let us write $X_i \colonec f^{-1}(U_i)$ and  $X_{ij} \colonec f^{-1}(U_{ij})$ for all $i$ and $j$. The 1-cocycle $(\eta^V_{ij})_{i,j}$ defines the transition maps $V \times X_{ij}  \to V \times X_{ij}$ which are translations by $\eta^V_{ij}$ and  the family $\cX \to V \times B$ is obtained by glueing $(V \times X_i  \to V \times U_i )_i$ together using these  transition maps. Since $q^{-1}(V \times U_i) \simeq V \times X_i $ over $V$ for all $i$, the family $\pi : \cX \to V$ is strongly locally trivial.

If $f : X \to B$ is endowed with an $f$-equivariant $G$-action for some finite group $G$, then this $G$-action induces an action on $\cE$ and on $\cJ$. The restriction to the $G$-invariant subspace $V^G \subset V$ of~\eqref{fam-Jtors} is a deformation of the $J$-torsor $f : X \to B$ preserving the equivariant $G$-action~\cite[Proposition 2.10]{ClaudonToridefequiv}. The proof that $V^G$ contains a dense subset of points parameterizing $J$-torsors having a multi-section is contained in the proof of~\cite[Theorem 1.1]{ClaudonToridefequiv}, which we sketch now and provide necessary references for the detail.

By Deligne's theorem, $W \colonec H^1(B, \bH_\bZ)$ is a pure Hodge structure of degree $2g$ and concentrated in bi-degrees $(g-1,g+1)$, $(g,g)$, and $(g+1,g-1)$~\cite[Section 2]{ZuckerHdgL2}. Let $W_K \colonec W \otimes K$ for any field $K$. If $F^\bullet W_\bC$ denotes the Hodge filtration, then $V$ is isomorphic to $W_\bC/F^gW_\bC$~\cite[Section 2]{ZuckerHdgL2}.
Let $\mu : W_\bR \to V$ denote the composition 
$$\mu : W_\bR \hto W_\bC  \to V.$$
Using the Hodge theory we see easily that $\mu$ is surjective, so $\mu(W_{\bQ})$ is dense in $V$. Since $G$ is finite, we have 
$$\mu(W_{\bQ}^G) \otimes \bR = \mu(W_{\bQ})^G \otimes \bR = V^G.$$
Therefore $\mu(W_{\bQ}^G) $ is dense in $V^G$.

Using the assumption that $X$ is Kähler, one can prove that the image of the $G$-equivariant class $\eta_G(f) \in H^1_G(B,\cJ)$ associated to $X$ (which is a refinement of $\eta(f)$, \emph{cf.}~\cite[Section 2.4]{ClaudonToridefequiv}) under the connection morphism
$$H^1_G(B,\cJ) \to H^2_G(B,\bH_\bZ)$$
is torsion~\cite[Proposition 2.11]{ClaudonToridefequiv}.
It follows that there exists $m \in \bZ_{>0}$ and $v_0 \in V^G$ such that $m\eta(f) = \exp(v_0)$. Therefore $\eta(f) + \exp\(v - \frac{v_0}{m}\)$ is torsion for each $v \in \mu(W_{\bQ}^G)$, so each of the fibrations $\cX_v \to B$ parameterized by the subset 
$$\mu(W_{\bQ}^G) - \frac{v_0}{m} \subset V^G$$
in the family~\eqref{fam-Jtors} has a multisection. As we saw that $\mu(W_{\bQ}^G) \subset V^G$ is dense, we conclude that the restriction of~\eqref{fam-Jtors} to $V^G$ is a deformation of $f: X \to B$ containing a dense subset of members having a multisection. 
\end{proof}

\begin{lem}\label{lem-2torfibaa}
Let $f: X \to B$ be a smooth isotrivial 2-torus fibration over a smooth projective curve $B$. Let $G$ be a finite group acting $f$-equivariantly on $X$ and on $B$ such that $X \to B$ coincides with the base change of $X/G \to B/G$ by $B \to B/G$. Whenever $C \subset X$ is a $G$-invariant  curve or empty, the pair $(X/G,C/G)$ has a $C/G$-locally trivial algebraic approximation. 
\end{lem}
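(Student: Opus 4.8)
The plan is to reduce everything to the surface results of Section~\ref{ssec-Gsurf} through the fibration structure, exactly as in the K3 case of Lemma~\ref{lem-K3fibaa}, the only new ingredient being that a $2$-torus fibration must first be rigidified by a multisection before it can be trivialized. First I would dispose of the trivial cases. If $X$ is algebraic then so is the finite quotient $X/G$, and the constant deformation $((X/G)\times\gD,\,(C/G)\times\gD)\to\gD$ is already a $C/G$-locally trivial algebraic approximation; so I may assume $X$ is non-algebraic. Next, by Lemma~\ref{lem-Gloctriv} it suffices to produce a $G$-equivariant $C$-locally trivial algebraic approximation of the \emph{smooth} pair $(X,C)$: passing to the quotient then yields a $C/G$-locally trivial deformation of $(X/G,C/G)$ whose members, being finite quotients of projective varieties, are again projective, hence an algebraic approximation (here the hypothesis that $X\to B$ is the base change of $X/G\to B/G$ is what makes the quotient fibration the intended object). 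If a general fiber of $f$ is an abelian surface, Corollary~\ref{cor-abaa} produces such an approximation directly and we are done. Thus the heart of the matter is the case where the isotrivial fiber $T$ is a \emph{non-algebraic} $2$-torus.

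In that case I would first invoke Lemma~\ref{lem-multsec} to deform $f$, $G$-equivariantly and strongly locally trivially, to a fibration admitting a multisection. As in the proof of Theorem~\ref{thm-gk1MMP}, the Kähler hypothesis forces the image of the monodromy in $\Aut(T)/\Aut_0(T)$ to be finite, so that after a further finite Galois base change $B'\to B$ — whose Galois group I would absorb into $G$ to form a larger finite group acting compatibly — the torsor becomes Jacobian with trivial monodromy and the fibration becomes a product $T\times B'$. On this product, projecting the (pulled-back) curve $C$ to the $T$-factor and applying the $2$-torus pair result Lemma~\ref{lem-2toreaa} fiberwise, precisely as in Lemma~\ref{lem-K3fibaa}, gives a $G$-equivariant, $C$-locally trivial algebraic approximation; the underlying density statement is Green's criterion (Theorem~\ref{thm-Grenndensecrit}), which supplies the deformation of $T$ toward abelian surfaces.

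The main obstacle, and where the care is needed, is the descent. One must combine the torsor deformation of Lemma~\ref{lem-multsec} with the fibrewise deformation of $T$ into a single $G$-equivariant family over $\gD$ whose central fiber is genuinely the original $X$, not merely a deformed model, and whose special members are projective. This is possible because both the locus of torsion torsor classes and the locus of abelian-surface fibers are dense, so that the projective members accumulate at the origin and a one-dimensional disc through it furnishes the algebraic approximation required by Lemma~\ref{lem-Gloctriv}. The second delicate point is the $C$-local triviality when $C$ is \emph{horizontal}: if $a(T)=0$ then $T$ carries no curves, so $C$ is forced to be a multisection and $f(C)=B$, whence Lemma~\ref{lem-strloctrivG} (which needs $f(C)$ finite) no longer applies. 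Here one must instead exploit the product structure $T\times B'$, trivialize a neighborhood of the image of $C$ in the $T$-factor, and invoke Lemma~\ref{lem-Gloctriv} to glue this trivialization back over $B'$ before descending through the base change and the $G$-quotient.
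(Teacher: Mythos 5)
Your proposal is correct and follows essentially the same route as the paper: use Lemma~\ref{lem-multsec} to deform to a fibration with a multisection (noting that in the absence of a multisection $C$ is vertical, so this step is $C$-locally trivial), pass to a finite Galois base change making the fibration a product $S\times\wt{B}$ with the enlarged Galois group acting diagonally, reduce via Lemma~\ref{lem-Gloctriv} to an equivariant approximation of the pair upstairs, and then project $C$ to the torus factor and apply Lemmata~\ref{lem-GaaKtrivsurf} and~\ref{lem-2toreaa} exactly as in Lemma~\ref{lem-K3fibaa}. The two points you flag as delicate (concatenating the two deformations, and horizontal $C$) are treated equally tersely in the paper, and your separate treatment of the abelian-fiber case via Corollary~\ref{cor-abaa} is a harmless refinement.
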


\begin{proof}

First we assume that $f$ does not have any multisection. In particular, the curve $C$ is contained in a finite union of fibers of $f$. Using Lemma~\ref{lem-multsec} there exists an arbitrarily small strongly locally trivial, so in particular $C$-locally trivial, deformation of $f$ to some fibration which has a multisection. Thus up to replacing $f$ by this arbitrarily small deformation, we can assume that $f$ has a multisection.

Since $f$ has a multisection, there exists a finite base change $\tilde{f} : \wt{X} \to \wt{B}$ of $X \to B$ such that $\wt{X} \simeq S \times \wt{B}$ where $S$ is a fiber of $f$ and that $\tilde{f}$ is the second projection. After base changing with the Galois closure of $\wt{B} \to B/G$, we can assume that $\wt{B} \to B/G$ is Galois whose Galois group is denoted by $\wt{G}$ acting on $\wt{B}$ and on $S$ by monodromy transformations.

Let $\wt{C}$ be the pre-image of $C$ under the map $\wt{X} \to X$, which is $\wt{G}$-invariant by assumption. By Lemma~\ref{lem-Gloctriv}, it suffices to show that the pair $(\wt{X},\wt{C})$ has a $\wt{G}$-equivariant $\wt{C}$-locally trivial algebraic approximation.  Since $S \times \wt{B} \to \wt{B}$ is isomorphic to the base change of $\wt{X}/G \to \wt{B}/G$ by $\wt{B} \to B/G$, the $\wt{G}$-action on $S \times \wt{B}$ induces a $\wt{G}$-action on $S$ such that the first projection $p_1 : S \times \wt{B} \to S$ is $\wt{G}$-equivariant. As $C$ is $\wt{G}$-invariant, the curve $C' \colonec p_1(C)$ is also $\wt{G}$-invariant. By Lemma~\ref{lem-GaaKtrivsurf} and~\ref{lem-2toreaa}, there exists a $\wt{G}$-equivariant $C'$-locally trivial algebraic approximation $(\cS,\cC') \to \gD$ of the pair $(S,C')$. By repeating the same argument as in the proof of Lemma~\ref{lem-K3fibaa}, we conclude that the deformation 
$\Pi : \cS \times \wt{B} \to \gD$ induces a $\wt{G}$-equivariant $\wt{C}$-locally trivial algebraic approximation of the pair $(\wt{X},\wt{C})$.
\end{proof}

\ssec{Non-algebraic 3-tori}

\begin{lem}\label{lem-3toriaa}
Let $X$ be a non-algebraic 3-torus and $G$ a finite group acting on $X$. Then for every $G$-invariant curve $C \subset X$, the pair $(X,C)$ has a $G$-equivariant algebraic approximation. 
\end{lem}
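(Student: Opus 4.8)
The plan is to read off the geometry of $C$ from the subtori its components generate, and then split into two cases. Write $X = V/\gL$, and recall that $G$ acts on $X$ by affine automorphisms (a biholomorphism of a torus is a translation composed with a group automorphism). If $C_i \subseteq C$ is an irreducible component with normalization $\nu_i : \wt{C_i} \to C_i$, then the composite $\wt{C_i} \to X$ factors, after a translation, through a homomorphism $\Alb(\wt{C_i}) \to X$; hence the subtorus $T_i \subseteq X$ generated by $C_i$, being the image of the abelian variety $\Alb(\wt{C_i})$, is an \emph{abelian} subvariety. Thus the subtorus $T \subseteq X$ generated by all components of $C$ is an abelian subvariety, and since $G$ permutes the $C_i$ it preserves $W \colonec \sum_i T_0 T_i \subseteq V$; in particular $T$ and the quotient $X \to X/T$ are $G$-equivariant. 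As $X$ is non-algebraic, $T \ne X$, so $\dim T \in \{1,2\}$.

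First I would treat $\dim T = 2$. Then $X/T$ is a $1$-torus, i.e. an elliptic curve, and $f : X \to X/T$ is a $G$-equivariant fibration onto a projective curve whose fibres are the cosets of $T$, all isomorphic to the abelian surface $T$. Since $X$ is non-algebraic, Corollary~\ref{cor-abaa} applies verbatim and yields a $G$-equivariant $C$-locally trivial, hence in particular a $G$-equivariant, algebraic approximation of the pair $(X,C)$.

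The remaining, and main, case is $\dim T = 1$. Here every component of $C$ is a translate of the single elliptic subtorus $T$, i.e. a fibre of the smooth elliptic fibration $f : X \to A \colonec X/T$ onto the $2$-torus $A$, so $C = \bigcup_i f^{-1}(a_i)$ is a union of finitely many fibres over points $a_1,\dots,a_k \in A$ permuted by $G$. I would deform $X$ inside the smooth $G$-stable locus $\Defo(X,T)$ of deformations keeping $T$ a subtorus, whose tangent space is the $G$-invariant subspace $\{\phi \in H^1(X,T_X) = \Hom(\ol V, V) : \phi(\ol W) \subseteq W\}$. Because $T$ is preserved, the fibration $f$ and the (real) points $a_i$ persist along such a deformation, so the union of the corresponding fibres provides a deformation of $C$; it therefore suffices to produce a $G$-equivariant algebraic approximation of $X$ within $\Defo(X,T)$. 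For this I would run the argument of Theorem~\ref{thm-Gdensecrit} on this constrained family: pick a $G$-invariant Kähler class $[\go]$ (average any Kähler class over $G$) and let $W'$ be its $G$-invariant $[\go]$-orthogonal complement to $W$ in $V$. A block computation with respect to $V = W \oplus W'$ shows that the restriction of $\mu_{[\go]} : H^1(X,T_X) \to H^2(X,\cO_X) = \Lambda^2 \ol V^*$ to $\{\phi(\ol W) \subseteq W\}$ is still surjective: the constraint only suppresses the block $\ol W \to W'$ of $\phi$, but the component $\ol W^* \wedge \ol{W'}^*$ of $\Lambda^2 \ol V^*$ is already reached through the unconstrained block $\ol{W'} \to W$, while $\Lambda^2 \ol{W'}^*$ is reached through $\ol{W'} \to W'$ and $\Lambda^2 \ol W^* = 0$. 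As this restriction is $G$-equivariant ($[\go]$ and $W$ being $G$-invariant), it remains surjective onto the invariants $H^2(X,\cO_X)^G$, which is exactly the input needed to apply Green's criterion equivariantly.

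The step I expect to demand the most care is this last one. The surjectivity of $\mu_{[\go]}$ on $\{\phi(\ol W)\subseteq W\}$ is elementary once $[\go]$ is made block-diagonal, but two points must be checked carefully: that $\Defo(X,T)$ is genuinely smooth with the stated tangent space (unobstructedness of deformations of the pair torus/subtorus), and that the density criterion of Theorem~\ref{thm-Gdensecrit} may legitimately be run on this \emph{constrained} smooth $G$-stable family rather than on the full universal deformation of $X$ — the whole point being that the unconstrained deformation would not retain $T$, and hence would destroy $C$. Verifying that the projective members produced carry the distinguished curve $\bigcup_i f^{-1}(a_i)$, so that one really obtains an approximation of the pair, is where the remaining bookkeeping lies; the case $C = \emptyset$ is subsumed, being handled by the unconstrained criterion directly.
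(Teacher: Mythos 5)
Your proposal follows the paper only in its easier half. Your case $\dim T = 2$ is in substance the paper's first case (the paper produces the abelian surface fibration from the Jacobian of a genus $\ge 2$ curve rather than from the subtorus generated by $C$, but both conclude via Corollary~\ref{cor-abaa}). In the main case $\dim T = 1$ the routes genuinely diverge: the paper does no period computation at all. It observes that all curves of $X$ are then fibres of the elliptic fibration $f : X \to S$ over the $2$-torus $S$, uses Lemma~\ref{lem-multsec} to make an arbitrarily small $G$-equivariant strongly locally trivial (hence $C$-locally trivial) deformation of $f$ acquiring a multisection, notes that a multisection forces the existence of an algebraic surface and hence of a curve of geometric genus $\ge 2$, and thereby falls back into the first case. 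Your alternative --- deforming inside the locus preserving $T$ and checking surjectivity of $\mu_{[\go]}$ on $\{\phi \in \Hom(\ol V, V) : \phi(\ol W) \subseteq W\}$ --- is a clean idea, and the block computation is correct: since $\dim W = 1$ the summand $\Lambda^2 \ol W^*$ of $H^2(X,\cO_X)$ vanishes, and the other summands are reached through the unconstrained blocks $\ol{W'} \to W$ and $\ol{W'} \to W'$.

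The step you flag is, however, a genuine gap as written: neither criterion quoted in the paper applies to your constrained family. Theorem~\ref{thm-Grenndensecrit} requires surjectivity onto all of $H^2(X,\cO_X)$, which the $G$-invariant part of your constrained tangent space cannot achieve unless $G$ acts trivially on $H^{0,2}(X)$; for a $3$-torus $h^{0,2}=3$ and the action is in general nontrivial, unlike the K3 situation of Lemma~\ref{lem-K3aa} where $h^{0,2}=1$ and the action is symplectic, so the paper's trick of running Theorem~\ref{thm-Grenndensecrit} on a constrained family does not transfer. Theorem~\ref{thm-Gdensecrit} is stated only for the full universal deformation space. What you need is an equivariant Green criterion for sub-families --- surjectivity of the invariant Kodaira--Spencer composite onto $H^2(X,\cO_X)^G$ implies density of members carrying a $G$-invariant rational K\"ahler class --- which is true and is essentially what the proof of Theorem~\ref{thm-Gdensecrit} in~\cite{GrafDefKod0} establishes, but you must re-run that proof on your family rather than cite either statement. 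A second caveat: the statement of Lemma~\ref{lem-3toriaa} only asks for a $G$-equivariant algebraic approximation of the pair, which your argument would deliver, but the paper's proof actually produces a $C$-locally trivial one and that stronger output is what is invoked in the proof of Theorem~\ref{thm-mainpair}; your constrained-period deformation is not obviously $C$-locally trivial (the complex structure transverse to the fibres $f^{-1}(a_i)$ varies), so an additional argument would be needed before your version could replace the paper's in the application.
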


\begin{proof}

First we assume that there exists a generically injective morphism $\nu : C' \to X$ from a smooth curve of geometric genus $\ge 2$ to $X$. Since $\nu$ factorizes through ${C'} \to J({C'}) \xto{j} X$ where $ J({C'})$ denotes the Jacobian associated to ${C'}$, the 3-torus $X$ contains an abelian variety of dimension $\ge 2$ which is  $j(J({C'})) \subset X$. As $X$ is non-algebraic, we have $\dim j(J({C}')) = 2$ and hence $X$ is a smooth isotrivial fibration $f : X \to B$ in abelian surfaces. As $X$ is assumed to be non-algebraic, the $G$-action on $X$ preserves the fibers of $f$. Hence we can apply Corollary~\ref{cor-abaa} to conclude that $(X,C)$ has a $G$-equivariant algebraic approximation.

Now assume that $X$ does not contain any curve of geometric genus $\ge 2$, then $C$ is a union of smooth elliptic curves. It follows that $X$ is a smooth isotrivial elliptic fibration $f: X \to S$. Moreover, the fibration $f$ does not have any proper curve other than the fibers of $f$. Indeed, if such a curve $C'$ exists, then for any fiber $F$ of $f$ the image of $\ga : C' \times F \to X$ defined by $\ga(x,y) \colonec x + y$ is an algebraic surface, so necessarily contains a curve of geometric genus $\ge 2$ which is in contradiction with our assumption. 

Since the only curves of $X$ are fibers of $f$, the curve $C$ is a union of fibers of $f$. It also follows that the $G$-action preserves the fibers of $f$, so induces a $G$-action on $S$. By Lemma~\ref{lem-multsec}, there exists an arbitrarily small $G$-equivariant strongly locally trivial, hence $C$-locally trivial, deformation $f' : (X',C) \to B$ of $f$ having a multisection. For such an $X'$, we already saw that $X'$ contains at least one curve of geometric genus $2$, so that $(X',C)$, and hence $(X,C)$, have a $G$-equivariant $C$-locally trivial algebraic approximation.
\end{proof}

\section{Algebraic approximations of compact Kähler threefolds}\label{sec-concl}

We can now conclude the proof of Theorem~\ref{thm-mainpair}.

\begin{proof}[Proof of Theorem~\ref{thm-mainpair}]

Let $X$ be a non-algebraic compact Kähler threefold and let $X'$ be a bimeromorphic model of $X$ for which we wish to prove that whenever $C \subset X'$ is a curve or empty, the pair $(X',C)$ has a locally trivial and $C$-locally trivial algebraic approximation.
If the choice of $X'$ is isomorphic to the quotient $\wt{X}/G$ of some smooth variety $\wt{X}$ by a finite group $G$, then first of all $\wt{X}/G$ is $\bQ$-factorial. To prove that $(\wt{X}/G,C)$ has a locally trivial and $C$-locally trivial algebraic approximation, it suffices by Lemma~\ref{lem-Gloctriv} to prove that the pair $(\wt{X},\wt{C})$ has a $G$-equivariant $\wt{C}$-locally trivial algebraic approximation $(\wt{\cX},\wt{\cC}) \to \gD$ where $\wt{C}$ is the pre-image of $C$ under the quotient map $ \wt{X} \to \wt{X}/G$.

If $\gk(X) = 0$, we choose $X'$ to be a minimal model of $X$. In particular, $X'$ is $\bQ$-factorial and has at worst terminal singularities. By Proposition~\ref{pro-gk0MMP}, the variety $X'$ is a quotient $\wt{X} / G$ by a finite group where $\wt{X}$ is either a non-algebraic 3-torus or the product of a non-algebraic K3 surface and an elliptic curve. If ${C} = \emptyset$, since $X'$ is minimal, the existence of a locally trivial algebraic approximation of $X'$ results from~\cite[Theorem 1.4]{GrafDefKod0}. If $C$ is a curve, the existence of a $G$-equivariant $\wt{C}$-locally trivial algebraic approximation of $(\wt{X},\wt{C})$ is a consequence of Lemma~\ref{lem-3toriaa}  or of Lemma~\ref{lem-K3fibaa} together with Lemma~\ref{lem-GactionK3E}, according to wether $\wt{X}$ is a 3-torus or the product of a non-algebraic K3 surface and an elliptic curve. 

If $\gk(X) = 1$, then by Theorem~\ref{thm-gk1MMP} there are two cases to be distinguished. If we are in the first case of Theorem~\ref{thm-gk1MMP}, with the same notation therein we take $X' = X_\mmin$, so in particular $X'$  is $\bQ$-factorial with at worst terminal singularities. Since the canonical fibration $X_\mmin \to B$ has a strongly locally trivial algebraic approximation~\cite[Theorem 1.6, Corollary 6.2]{HYLbimkod1}, we can apply Lemma~\ref{lem-sltaa} and deduce that $(X',C)$ has a locally trivial and $C$-locally trivial algebraic approximation for every curve $C \subset X'$. If we are in the second case of Theorem~\ref{thm-gk1MMP}, with the same notation therein we take $X' = \wt{X} /G$ where $G \colonec \Gal(\wt{B}/B)$. By~\cite[Proposition 4.7]{HYLbimkod1}, the variety $X'$ has at worst terminal singularities. The existence of a $G$-equivariant $\wt{C}$-locally trivial algebraic approximation of $(\wt{X},\wt{C})$ is a consequence of Lemma~\ref{lem-K3fibaa} or Lemma~\ref{lem-2torfibaa}, according to wether $\wt{X} \to \wt{B}$ is a fibration in K3 surfaces or 2-tori.
\end{proof}

As was mentioned in the introduction, the combination of Proposition~\ref{pro-red} and Theorem~\ref{thm-mainpair} proves Theorem~\ref{thm-main}, the existence of an algebraic approximation of any compact Kähler threefold of Kodaira dimension 0 or 1. 

Finally we prove Proposition~\ref{pro-gk2aa}, which concerns threefold of Kodaira dimension 2.

\begin{proof}[Proof of Proposition~\ref{pro-gk2aa}]

As an output of the Kähler MMP for threefolds and the abundance theorem (\emph{cf}. the beginning of Section~\ref{sec-bim}), a compact Kähler threefold $X$ with $\gk(X) = 2$ is bimeromorphic to an elliptic fibration $X' \to B'$ with $X'$ being normal and $B'$ a projective surface. Let
\begin{equation}\label{map-res}
\begin{tikzcd}[cramped, row sep = 0, column sep = 20]
X & Y'  \arrow[r, "\nu"] \ar[l , "\mu" swap] & {X'} \\
\end{tikzcd}
\end{equation}
be a resolution of the bimeromorphic map $X \dto X'$ where $\mu$ is bimeromorphic. Since $X'$ is normal, there exists a subvariety $C \subset X'$ of dimension at most $1$ such that the restriction of $\nu$ to $Y' \bss \nu^{-1}(C)$ is an isomorphism onto $X' \bss C$. Accordingly $f' : Y' \to B'$ is still an elliptic fibration. Let $D' \subset B'$ denote the locus parameterizing singular fibers of $f'$ and let $(B,D) \to (B',D')$ be a log-resolution of the pair $(B',D')$. Let $\nu' : Y \to Y' \times_{B'} B$ be a desingularization of $Y' \times_{B'} B$. As  $U \colonec Y' \times_{B'} (B \bss D) \to B \bss D$ and hence $U$ are smooth, we can assume that the restriction of $\nu'$ to the Zariski open $\nu'^{-1}(U)$ is an isomorphism onto $U $. It follows that $Y \to B$ is an elliptic fibration whose locus of singular fibers is contained in the normal crossing divisor $D$. 

Let $\eta : Y \to Y' \times_{B'} B \to Y' \to X$ denote the composition, which is bimeromorphic. Since both $Y$ and $X$ are smooth, we have $\eta_*\cO_Y = \cO_X$ and $R^1\eta_*\cO_Y = 0$. We can therefore apply~\cite[Theorem 2.1]{RanStabMap} as in the proof of Lemma~\ref{lem-deform} to conclude that if Question~\ref{Q-ellpaa} has a positive answer for the elliptic fibration $Y \to B$, then $X$ has an algebraic approximation by~\cite[Theorem 2.1]{RanStabMap}.

\end{proof}

\section*{Acknowledgement}
The author is supported by the SFB/TR 45 "Periods, Moduli Spaces and Arithmetic of Algebraic Varieties" of the DFG (German Research Foundation). He would like to thank F. Gounelas, C.-J. Lai, S. Schreieder, A. Soldatenkov, and C. Voisin for questions, remarks, and general discussions on various subjects related to this work.

\bibliographystyle{plain}
\bibliography{Kod3kappa01}

\end{document}